\DeclareMathOperator{\End}{End}
\DeclareMathOperator{\Ext}{Ext}
\DeclareMathOperator{\gld}{gldim}
\DeclareMathOperator{\Hom}{Hom}
\DeclareMathOperator{\id}{id}
\DeclareMathOperator{\injdim}{injdim}
\DeclareMathOperator{\Kdim}{Kdim}
\DeclareMathOperator{\kk}{\Bbbk}
\DeclareMathOperator{\lgld}{lgld}
\DeclareMathOperator{\lhu}{\leftharpoonup}
\DeclareMathOperator{\M}{\mathcal{M}}
\DeclareMathOperator{\N}{\mathbb{N}}
\DeclareMathOperator{\pd}{pd}
\DeclareMathOperator{\pdim}{pdim}
\DeclareMathOperator{\rgld}{rgld}
\DeclareMathOperator{\rhu}{\rightharpoonup}
\DeclareMathOperator{\Tor}{Tor}
\DeclareMathOperator{\vep}{\varepsilon}
\DeclareMathOperator{\vtr}{\vartriangleright}
\DeclareMathOperator{\Z}{\mathbb{Z}}
\newcommand{\To}{\longrightarrow}
\numberwithin{equation}{section}
\theoremstyle{definition}
\newtheorem{thm}{Theorem}[section]
\newtheorem{prop}[thm]{Proposition}
\newtheorem{lem}[thm]{Lemma}
\newtheorem{cor}[thm]{Corollary}
\newtheorem{defn}[thm]{Definition}
\newtheorem{ques}[thm]{Question}
\newtheorem{ex}[thm]{Example}
\newtheorem{hypo}[thm]{Hypothesis}
\newtheorem{conj}[thm]{Conjecture}
\begin{document}

\title{Artin-Schelter Gorenstein property of Hopf Galois extensions}


\author{Ruipeng Zhu}
\address{School of Mathematics, Shanghai University of Finance and Economics, Shanghai 200433, China}
\email{zhuruipeng@sufe.edu.cn}

\begin{abstract}
	This paper investigates the homological properties of the faithfully flat Hopf Galois extension $A \subseteq B$.
	It establishes that when $B$ is a noetherian affine PI algebra and $A$ is AS Gorenstein, $B$ inherits the AS Gorenstein property.
	Furthermore, we demonstrate that injective dimension serves as a monoidal invariant for AS Gorenstein Hopf algebras. Specifically, if two such Hopf algebras have equivalent monoidal categories of comodules, then their injective dimensions are equal.
\end{abstract}
\subjclass[2020]{
	16E65, 
	16E10, 
	16T05. 
}

\keywords{Artin-Schelter Gorenstein algebra, Hopf Galois extension, monoidal Morita-Takeuchi equivalence}


\maketitle



\section*{Introduction}

Over two decades ago, Brown and Goodearl embarked on exploring ring-theoretic attributes of infinite-dimensional Hopf algebras, with their findings detailed in the literature \cite{Bro1998, BG1997}.
Their investigation revealed that numerous examples of noetherian affine Hopf algebras possess the Gorenstein property. Furthermore, they demonstrated that noetherian affine PI, Gorenstein Hopf algebras exhibit various favorable homological characteristics.

In \cite{Bro1998}, Brown posed a series of inquiries pertaining to general noetherian (PI) Hopf algebras, among which is the following noteworthy question \cite[Question A]{Bro1998}:

\begin{conj}[Brown-Goodearl conjecture]\label{BG-conj}
	Is every noetherian Hopf algbera AS Gorenstein?
\end{conj}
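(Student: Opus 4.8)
Since the conjecture is open, what follows is not a proof but the line of attack that the transfer theorem of this paper opens up, restricted to the case where it currently has purchase: noetherian affine PI Hopf algebras. The non-PI case is set aside, as it seems to require a robust theory of rigid dualizing complexes together with uniform Auslander-type control of Gelfand--Kirillov dimension, neither of which is yet in hand.

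First, given a noetherian affine PI Hopf algebra $H$ over $\kk$, the plan is to locate a central Hopf subalgebra $A \subseteq H$ over which $H$ is module-finite. Being commutative and affine, such an $A$ has good homological behaviour --- it is AS regular when $\kk$ has characteristic zero, by Cartier's theorem, and one expects AS Gorensteinness in the situations that actually arise. Since $A$ is central it is a normal Hopf subalgebra, so $\bar H := H/HA^{+}$ is a finite-dimensional quotient Hopf algebra and, by Schneider's criterion, $A \subseteq H$ is a faithfully flat $\bar H$-Hopf Galois extension (faithful flatness following from the module-finiteness together with the standard results of Skryabin and Masuoka--Wu on Hopf subalgebras of noetherian Hopf algebras).

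Second, one feeds this into the main theorem of the paper: $B = H$ is noetherian affine PI and $A$ is AS Gorenstein, so $H$ inherits the AS Gorenstein property. Because $\bar H$ is finite-dimensional one moreover expects the sharp statement $\injdim H = \injdim A$, so that the conjecture would hold for $H$ with an explicit value of the injective dimension; the monoidal-invariance half of the abstract would then say this value depends only on the comodule category.

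The main obstacle --- and the reason the conjecture survives --- is the very first step. It is a well-known open problem whether every noetherian affine PI Hopf algebra is module-finite over a central Hopf subalgebra, and without such structure the method stalls: one would then need a \emph{non-central} normal Hopf subalgebra that is itself AS Gorenstein, but the structure theory of infinite-dimensional noetherian Hopf algebras is far less rigid than in the finite-dimensional, cocommutative, or connected graded settings, and pathologies such as a non-noetherian Hopf subalgebra or an infinite-dimensional quotient $\bar H$ have to be excluded by hand. Consequently the transfer theorem is best regarded as a device for propagating the AS Gorenstein property along Hopf Galois extensions that are already visible, and the conjecture in full generality remains out of reach by these techniques.
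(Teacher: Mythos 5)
This item is a conjecture, not a theorem: the paper states the Brown--Goodearl question as an open problem and offers no proof of it, so there is no argument of the paper to compare yours against. You correctly recognize this and present a strategy rather than a proof, which is the right posture.

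That said, the strategy as sketched has a structural problem beyond the gap you name. You restrict to the noetherian affine PI case, but that case is not where the conjecture lives: the paper itself cites Wu and Zhang's theorem that \emph{every} noetherian affine PI Hopf algebra is AS Gorenstein (this is exactly how Corollary \ref{PI-Hopf-alg-mMT-preserve-injdim-cor} is obtained), and Theorem \ref{Wu-Zhang-thm} in Section 2 is the engine behind the paper's transfer result. So even if you could produce the central Hopf subalgebra $A$ with $H$ module-finite and faithfully flat over it --- which, as you note, is itself a well-known open problem --- the conclusion you would reach is already a theorem by a route that needs no such subalgebra. The transfer theorem of this paper is aimed at a different target: it takes an AS Gorenstein \emph{coinvariant} algebra $A$ that need not be a Hopf algebra at all and propagates the property up to $B$; using it to attack the conjecture for $H$ itself inverts the logic, since you must first manufacture an AS Gorenstein $A$ inside $H$, and in the non-PI case (where the conjecture is genuinely open) the theorem's PI hypothesis on $B$ rules out its application entirely. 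Two smaller points: your appeal to Cartier's theorem only covers characteristic zero, and AS Gorensteinness of a commutative affine Hopf subalgebra in positive characteristic, while true, needs a separate citation rather than the phrase ``one expects''; and the expected dimension formula should be $\injdim H = \injdim A + \injdim \bar H = \injdim A + 0$, consistent with the paper's Theorem \ref{PI-Hopf-Galois-ext-is-AS-Gorenstein}, since a finite-dimensional Hopf algebra is AS Gorenstein of dimension $0$ --- so that part of your sketch is fine but should be attributed to the cited theorem rather than to a separate expectation.
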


Since then, extensive research has explored the homological characteristics associated with the AS Gorenstein property, including the existence of rigid dualizing complexes for infinite-dimensional noetherian Hopf algebras.
More recently, the Brown–Goodearl conjecture has been explored in the context of other classes of noetherian algebras that exhibit similarities to Hopf algebras, such as weak Hopf algebras, as reported in \cite{RWZ2021}.
In this paper, we delve into the Brown–Goodearl question specifically for faithfully flat Hopf Galois extensions, which can be regarded as noncommutative analogues of principal bundles.

Applying a result of Wu and Zhang \cite{WZ2003}, we have the following result concerning whenever faithfully flat Hopf Galois extension of  an AS Gorenstein algebra is also AS Gorenstein, see Theorem \ref{PI-Hopf-Galois-ext-is-AS-Gorenstein}.

\begin{thm}
	Let $H$ be a Hopf algebra, and $B$ be a noetherian affine PI algebra which is a faithfully flat $H$-Galois extension of $A := B^{co H}$.
	If $A$ is AS Gorenstein, then $B$ is as well.
	Additionally, if $H$ is AS Gorenstein of dimension $d_H$, then $\injdim({B_B}) = \injdim({A_A}) + d_H$. 
\end{thm}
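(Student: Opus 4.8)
The plan is to handle the two assertions in turn. The first --- that $B$ is AS Gorenstein when $A$ is --- will follow by applying the extension result of Wu--Zhang \cite{WZ2003} once its hypotheses have been checked for a faithfully flat Hopf--Galois extension. The second --- the formula $\injdim(B_B)=\injdim(A_A)+d_H$ --- will be obtained from a homological dimension count that feeds the Galois isomorphism $\beta\colon B\otimes_A B\xrightarrow{\ \sim\ }B\otimes H$ into a change-of-rings argument, using the AS Gorenstein property of $H$ in an essential way.

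For the first assertion I would first record the structural facts that a faithfully flat $H$-Galois extension $A\subseteq B$ is faithfully flat as a left \emph{and} as a right $A$-module, and that faithful flatness descends noetherianity (since $IB\cap A=I$ for each one-sided ideal $I$ of $A$): as $B$ is noetherian, so is $A=B^{co H}$, and $A$ is PI as a subalgebra of the PI algebra $B$. Thus $A$ is a noetherian PI, AS Gorenstein algebra and $B$ is a two-sided noetherian, faithfully flat, affine PI extension of it --- precisely the situation of the Wu--Zhang extension theorem, which then gives that $B$ is AS Gorenstein; in particular $\injdim({}_BB)=\injdim(B_B)<\infty$. If one wishes to isolate the genuinely Artin--Schelter part of this conclusion --- concentration and one-dimensionality of the top $\Ext$ of the trivial module --- it can be read off from Schneider's structure theorem, which identifies right $A$-modules with relative $(B,H)$-Hopf modules, carrying $A_A$ to $B$ with its Galois coaction and the trivial $A$-module to the corresponding simple $B$-module.

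For the second assertion I would compare finite injective resolutions of $B$, of $A$ and of $H$ via a change-of-rings spectral sequence adapted to the Hopf--Galois extension. Rewriting $B\otimes_A B\cong B\otimes H$ by means of $\beta$ produces a Grothendieck-type spectral sequence whose second page is assembled from $\Ext^\bullet_A(-,A)$ and $\Ext^\bullet_H(\kk,H)$ and which converges to $\Ext^\bullet_B(-,B)$ --- informally, the statement that the ``fibre'' of the extension $A\subseteq B$ is $H$. Since $A$ and $H$ are AS Gorenstein of dimensions $\injdim(A_A)$ and $d_H$, the group $\Ext^i_A(-,A)$ vanishes for $i>\injdim(A_A)$ and $\Ext^i_H(\kk,H)$ vanishes for $i>d_H$, each being one-dimensional in its top degree; inspecting the corner of the spectral sequence then shows that the top nonvanishing degree of $\Ext^\bullet_B(-,B)$ is exactly $\injdim(A_A)+d_H$. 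Combined with the equality of the left and right injective dimensions of $B$ from the first assertion, this yields $\injdim(B_B)=\injdim(A_A)+d_H$; equivalently, the rigid dualizing complex of $B$ carries an extra shift by $d_H$ over that of $A$, reflecting that the rigid dualizing complex of $H$ is a twist of $H[d_H]$.

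The step I expect to be the main obstacle is the dimension shift in the second assertion. The difficulty is that $B$ is in general \emph{not} finitely generated as an $A$-module --- for instance $H$ itself over $\kk$ in the case $A=\kk$ --- so the naive flat base-change identities for $\Ext$ are unavailable; making the spectral sequence rigorous, and in particular showing that the ``fibre'' contributes \emph{exactly} $d_H$ rather than merely bounding the contribution, will require using faithful flatness and the Galois isomorphism $\beta$ with care, and again invoking the affine PI / noetherian framework through \cite{WZ2003}. A secondary delicate point is the Artin--Schelter (as opposed to merely Gorenstein) half of the first conclusion in the absence of a canonical augmentation on $B$, which is where the Hopf-module equivalence is the natural tool.
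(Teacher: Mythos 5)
Your overall strategy (reduce to a Wu--Zhang criterion, then run a spectral sequence whose $E_2$-page mixes $\Ext_A^\bullet$ with $\Ext_H^\bullet(\kk,H)$) points in the right direction, but there are two genuine gaps. First, there is no off-the-shelf ``Wu--Zhang extension theorem'' of the form you invoke. What \cite{WZ2003} provides (Theorem \ref{Wu-Zhang-thm} in this paper) is a criterion for a single noetherian affine PI algebra to be AS Gorenstein, namely Hypothesis \ref{WZ-hypo}: for all simple right $B$-modules $U,V$ the sets $\{i:\Ext^i_{B^{op}}(U,B)\neq 0\}$ and $\{i:\Ext^i_{B^{op}}(V,B)\neq 0\}$ coincide, and $\Ext^i_{B^{op}}(-,B)$ is exact on finite-length modules. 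Verifying these two conditions for $B$ \emph{is} the proof of the first assertion; your proposal asserts that the hypotheses are satisfied without checking them, and your appeal to Schneider's structure theorem ``carrying the trivial $A$-module to the corresponding simple $B$-module'' does not engage with the actual issue, which concerns \emph{all} finite-dimensional simple $B$-modules and has nothing to do with an augmentation.

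Second, and more importantly, both the verification of Hypothesis \ref{WZ-hypo} and the exactness of the dimension count hinge on one structural fact that your proposal never supplies: for a simple (hence finite-dimensional, hence $\mathbf{FP}_\infty$ over $A$) right $B$-module $U$, the Miyashita--Ulbrich right $H$-module $\Ext^i_{A^{op}}(U,B)$ is \emph{free}, namely $\Ext^i_{A^{op}}(U,B)\cong \Ext^i_{A^{op}}(U,A)\otimes H$ (Lemma \ref{(-)^A-free}, proved by resolving $B$ in $\M^H_B$ by terms $I^i\otimes H$ and descending along the equivalence $\M^H_B\simeq\M_A$). Feeding this into Stefan's spectral sequence in the form of Lemma \ref{H-Galois-spectral-sequenceII}, whose $E_2$-page is $\Ext^p_{H^{op}}(\kk,\Ext^q_{A^{op}}(U,B))$ --- note the coefficients are $B$, not $A$ --- gives a single nonzero row $q=d_A$ and $\Ext^i_{B^{op}}(U,B)\neq 0 \iff \Ext^{i-d_A}_{H^{op}}(\kk,H)\neq 0$, a condition independent of $U$; freeness also makes the relevant short exact sequences of $\Ext^{d_A}_{A^{op}}(-,B)$ split over $H$, which yields the exactness condition. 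Without this lemma your spectral-sequence ``corner inspection'' only bounds $\injdim(B_B)$ above by $\injdim(A_A)+d_H$ (one cannot rule out vanishing of the corner term), and the first assertion is not established at all. You correctly flag the dimension shift as the main obstacle, but the missing mechanism is precisely this $H$-freeness, not the non-finiteness of $B$ over $A$ (which is harmless here, since the base change happens in the second variable of $\Ext$).
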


In the subsequent discussion, we delve into a specific finiteness condition that offers valuable insights for group algebras.

Let $R$ be a ring.
An $R$-module $M$ is said to be of type $\mathbf{FP}_n$ ($n \geq 0$) if there is a projective resolution
\[ \cdots \To P_{n+1} \To P_n \To \cdots \To P_1 \To P_0 \To M \To 0, \]
where $P_0, P_1, \dots, P_n$ are finitely generated.
Furthermore, $M$ is said to be of type $\mathbf{FP}_{\infty}$ if there is a projective resolution
\[ \cdots \To P_{n+1} \To P_n \To \cdots \To P_1 \To P_0 \To M \To 0, \]
where each $P_n$ is finitely generated for all integers $n \geq 0$.
An $R$-module $M$ of type $\mathbf{FP_{\infty}}$ is said to be of type $\mathbf{FP}$, if $M$ has finite projective dimension.

Let $H$ be a Hopf algebra over a field $\kk$.
Then $\kk$ is a left $H$-module induced by the counit map $\vep: H \to \kk$, which is called the trivial left module of $H$.
We will say that $H$ is of type $\mathbf{FP}_n$ ($n \in \N \cup \{ \infty, \emptyset \} $) if the trivial left module $\kk$ is of type $\mathbf{FP}_n$ as an $H$-module.
Evidently, noetherian Hopf algebras are always of type $\mathbf{FP_{\infty}}$. However, it is worth mentioning that there are numerous non-noetherian group algebras that also belong to the type $\mathbf{FP_{\infty}}$, as explored in Chapter VIII of \cite{Bro1982}.

In \cite{Sch1996}, Schauenburg 
has shown the equivalence of the following assertions:
\begin{enumerate}
	\item[(1)] $L$ and $H$ are {\it monoidally Morita-Takeuchi equivalent}, that is, their comodule categories are monoidally equivalent;
	\item[(2)] there is a $L$-$H$-biGalois extension of $\kk$.
\end{enumerate}

Recently, several homological properties, including global dimension and skew Calabi-Yau property, have garnered significant attention and have been extensively studied in the context of monoidal Morita-Takeuchi equivalence, see \cite{Bic2016, WYZ2017, Bic2022}.
This research area has produced many fascinating results and questions, some of which are outlined below.

\begin{thm}\label{mMTE-gld-thm}
	Let $H$ and $L$ be two monoidally Morita-Takeuchi equivalent Hopf algebras of type $\mathbf{FP}$.
	
	(1) \cite[Theorem 8]{Bic2022} Then they have the same global dimension, that is, $\gld (H) = \gld (L)$.
	
	(2) \cite[Theorem 2]{WYZ2017} If $H$ is AS regular of dimension $d$, so is $L$. 
\end{thm}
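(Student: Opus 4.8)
The plan is to route both assertions through bi-Galois objects and Yetter--Drinfeld modules, so that each becomes a statement about a categorical invariant of the monoidal category $\mathrm{Comod}(H)$. By Schauenburg's theorem recalled above, the monoidal equivalence $\mathrm{Comod}(H)\simeq\mathrm{Comod}(L)$ is induced by an $L$-$H$-biGalois object $Z$: a faithfully flat $H$-Galois extension $\kk\subseteq Z$ which is simultaneously an $L$-Galois extension of $\kk$, the equivalence being the cotensor functor $-\,\square_H\,Z$ with quasi-inverse $-\,\square_L\,Z^{-1}$. (One might hope to invoke Theorem \ref{PI-Hopf-Galois-ext-is-AS-Gorenstein} with $A=\kk$ and $B=Z$, but $Z$ need not be Noetherian affine PI, so a purely homological argument is needed instead.)

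For part (1), the first step is to reduce the global dimension to the projective dimension of the counit in a category that depends only on the monoidal structure. Under the type $\mathbf{FP}$ hypothesis $H$ is homologically smooth, so $\gld H$ agrees with the Hochschild dimension $\pd_{H^e}(H)$; and Bichon's free Yetter--Drinfeld resolution of the counit identifies this with $\pd_{{}^H_H\YD}(\kk)$, the projective dimension of the trivial Yetter--Drinfeld module. Since ${}^H_H\YD$ is monoidally equivalent to the Drinfeld center $\mathcal{Z}(\mathrm{Comod}(H))$, with $\kk$ the unit object, the number $\pd_{{}^H_H\YD}(\kk)$ is manifestly unchanged under monoidal equivalence; transporting it through $Z$ gives $\gld H=\pd_{{}^H_H\YD}(\kk)=\pd_{{}^L_L\YD}(\kk)=\gld L$. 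Along the way one checks that type $\mathbf{FP}$ itself is transported, as compact projective objects and finite projective resolutions are preserved by the induced equivalence of centers.

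For part (2), the strategy is to encode AS regularity homologically and then transport it. A type $\mathbf{FP}$ Hopf algebra that is AS regular of dimension $d$ is skew Calabi--Yau of dimension $d$ --- that is, $H$ is perfect as an $H^e$-module and $\RHom_{H^e}(H,H^e)\cong H_{\mu}[-d]$ for a Nakayama automorphism $\mu$ --- and conversely; so it suffices to show that being skew Calabi--Yau of dimension $d$ is a monoidal Morita--Takeuchi invariant. For this one uses $Z$ to carry a finite projective $H^e$-resolution of $H$ to a finite projective $L^e$-resolution of $L$: cotensoring with $Z$ in the bimodule direction transports the rigid dualizing datum of $H$ to that of $L$ with the same homological shift, so $\RHom_{L^e}(L,L^e)\cong L_{\nu}[-d]$. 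Together with part (1), which already gives $\gld L=d$, this shows $L$ is skew Calabi--Yau, hence AS regular, of dimension $d$; the condition $\Ext^i_L(\kk,L)=\delta_{i,d}\,\kk$ is then read off from the skew Calabi--Yau presentation.

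The main obstacle is precisely this transport of non-categorical data: a monoidal equivalence of comodule categories only sees comodules, whereas global dimension, the AS condition $\Ext^i_H(\kk,H)=\delta_{i,d}\,\kk$, and the skew Calabi--Yau datum all concern $H$ as a module or bimodule over itself, which is not directly visible on the comodule side. The biGalois object $Z$ --- equivalently, Bichon's cogroupoid formalism --- is what bridges this gap, but one must verify carefully that cotensoring with $Z$ is compatible with the relevant module- and bimodule-theoretic structures, so that projective resolutions, their finiteness, and the Hochschild-cohomological volume form (the shift $d$ and the Nakayama automorphism) are all faithfully carried across. A secondary point, logically prior to the rest, is that the homological-smoothness inputs --- $H$ perfect over $H^e$ and the equality $\gld H=\pd_{H^e}(H)$ --- have to be obtained from the bare type $\mathbf{FP}$ hypothesis rather than from Noetherianity.
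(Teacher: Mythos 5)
First, note that the paper itself offers no proof of this statement: both parts are quoted from the literature (\cite[Theorem 8]{Bic2022} and \cite[Theorem 2]{WYZ2017}), so the comparison below is against the arguments those citations stand for, which the paper partially reconstructs in Section 4 (Theorem \ref{sCY-B-vs-AS-Gorenstein-H} and Lemma \ref{Ext-lem}).

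Your part (1) has a genuine gap at the step ``Bichon's free Yetter--Drinfeld resolution of the counit identifies $\pd_{H^e}(H)$ with $\pd_{{}^H_H\YD}(\kk)$.'' The reduction $\gld H=\pd_H(\kk)=\pd_{H^e}(H)$ is fine (and does not even need type $\mathbf{FP}$), but the further identification with the projective dimension of the unit in the Yetter--Drinfeld category is \emph{not} known in general: what Bichon proves is only the inequality $\gld H\le \pd_{{}^H_H\YD}(\kk)$, with equality established under additional hypotheses such as cosemisimplicity. Since $\pd_{{}^H_H\YD}(\kk)$ is manifestly a monoidal invariant (it lives in the Drinfeld center), equality in general would resolve Question \ref{ques-0}(1) with no type $\mathbf{FP}$ hypothesis at all --- and that question is open, indeed Example \ref{ex} shows the unconditional invariance of $\gld$ fails. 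So the center route cannot work as stated. The actual argument of \cite[Theorem 8]{Bic2022} stays with the biGalois object $B$ itself: for $H$ of type $\mathbf{FP}$ with $\gld H=d<\infty$ one shows $\pd(_{B^e}B)=d$ (this is Theorem \ref{sCY-B-vs-AS-Gorenstein-H}(1), using Lemma \ref{Ext-lem} for the lower bound and Corollary \ref{pdim-B-as-B-B-bimodule} for the upper bound), and then symmetry of the $L$-$H$-biGalois structure gives $\gld L=\pd(_{B^e}B)=\gld H$. This is where the type $\mathbf{FP}$ hypothesis genuinely enters, whereas in your write-up it plays no essential role --- a sign that the argument proves too much.

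Your part (2) is essentially the correct and standard route, and it is the one the paper's Section 4 machinery is built to support: $H$ AS regular of dimension $d$ and type $\mathbf{FP}$ is equivalent to $H$ being skew Calabi--Yau of dimension $d$; the biGalois object $B$ is then skew Calabi--Yau of dimension $d$ (Theorem \ref{sCY-B-vs-AS-Gorenstein-H}(2), via $B\otimes\Ext^i_{H^{op}}(\kk,H)\cong\Ext^i_{B^e}(B,B^e)$ of Lemma \ref{Ext-lem}); and reading the same isomorphism for the left $L$-Galois structure recovers $\Ext^i_{L^{op}}(\kk,L)$ and hence the AS condition for $L$. The only caveat is that your phrase ``cotensoring with $Z$ transports the rigid dualizing datum'' is doing a lot of work: the verifiable mechanism is precisely the two-sided comparison of $\Ext^\bullet_{B^e}(B,B^e)$ with $\Ext^\bullet$ over $H$ and over $L$, not a direct transport of $H^e$-resolutions to $L^e$-resolutions. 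If you repair part (1) by the $\pd(_{B^e}B)$ argument, part (2) goes through as you describe.
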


\begin{ques}\label{ques-0}
	Let $H$ and $L$ be two monoidally Morita-Takeuchi equivalent Hopf algebras.
	
	(1) \cite{Bic2016, Bic2022} Then $\gld(H) = \gld(L)$?
	
	(2)\cite[Question 3]{WYZ2017} Suppose that $H$ is of type $\mathbf{FP}$. Is $L$ always of type $\mathbf{FP}$?
\end{ques}

The article \cite{Bic2022} serves as a fundamental reference for Question \ref{ques-0}. While the question holds true for a significant number of examples, there exists a counterexample, as demonstrated in Example \ref{ex}. Despite this, we will explore alternative homological conditions that might exhibit monoidal invariance. Notably, Theorem \ref{mMT-preserve-FP_infty} will prove that being of type $\mathbf{FP}_{\infty}$ is a monoidal invariant for Hopf algebras. This theorem suggests that even in the presence of counterexamples, certain homological properties can still retain their invariance under monoidal Morita-Takeuchi equivalence, providing valuable insights into the structure and behavior of these algebras.
Moreover, we have established the following theorem.

\begin{thm}\label{main-thm-1}
	Let $H$ and $L$ be two Hopf algebras that are monoidally Morita-Takeuchi equivalent and of type $\mathbf{FP}_{\infty}$.
	Suppose that the antipodes of $H$ and $L$ are both bijective.
	
	(1) If $H$ has finite global dimension $d$, then the injective dimension $\injdim(L) = d$.
	
	(2) If $H$ is AS regular of dimension $d$, then $L$ is AS Gorenstein of dimension $d$.
\end{thm}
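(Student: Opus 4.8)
The plan is to make the monoidal Morita--Takeuchi equivalence explicit via a biGalois object and then transport homological data along each of its two Galois structures. By Schauenburg's theorem \cite{Sch1996}, the hypothesis that $\mathcal{M}^H$ and $\mathcal{M}^L$ are monoidally equivalent produces an $L$-$H$-biGalois object $B$ over $\kk$; since $\kk$ is a field this extension is automatically faithfully flat, so $\kk \subseteq B$ is at once a faithfully flat right $H$-Galois extension with $B^{coH}=\kk$ and a faithfully flat left $L$-Galois extension with ${}^{coL}B=\kk$. The two steps are then: (i) use the right $H$-Galois structure to push the hypotheses on $H$ onto $B$; (ii) use the left $L$-Galois structure to push the resulting properties of $B$ onto $L$. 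Throughout, Theorem \ref{mMT-preserve-FP_infty} ensures that $L$ is again of type $\mathbf{FP}_\infty$, and the bijectivity of the antipodes of $H$ and $L$ is what lets one pass between left and right module structures, so that in particular the one-sided injective dimensions of $L$ (and of $B$) agree.

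For step (i), note that $A:=B^{coH}=\kk$ is trivially AS regular of dimension $0$. Applying the homological identities for faithfully flat Hopf Galois extensions of \cite{WZ2003} --- as in the proof of Theorem \ref{PI-Hopf-Galois-ext-is-AS-Gorenstein}, but now with the finiteness supplied by $\mathbf{FP}_\infty$/finite global dimension in place of the Noetherian PI hypothesis --- one obtains: in case (1), that $B$ has finite global dimension, necessarily equal to $\pd_H(\kk)=\gld(H)=d$, whence $\injdim(B_B)=\injdim({}_BB)=d$; and in case (2), additionally that $B$ is skew Calabi--Yau of dimension $d$, i.e.\ $B$ is homologically smooth and $\Ext^i_{B^e}(B,B^e)$ is zero for $i\neq d$ and an invertible $B$-bimodule for $i=d$. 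Concretely this rests on the structure theorem for relative Hopf modules, which identifies the category $\mathcal{M}^H_B$ with $\Mod\kk$ in such a way that $B$ corresponds to $\kk$; the exact forgetful functor $\mathcal{M}^H_B\to\Mod B$, together with its cofree right adjoint $-\otimes H$, then yields a spectral sequence relating $\Ext$ over $B$ to $\Ext$ in $\mathcal{M}^H_B\cong\Mod\kk$ and to the cohomology of $H$, finiteness of the outcome being guaranteed by $H$ (hence the relevant comodules) being of type $\mathbf{FP}_\infty$ --- which is precisely the input the Wu--Zhang argument needs once the Noetherian hypothesis is dropped.

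For step (ii), run the mirror argument for the faithfully flat left $L$-Galois extension $\kk\subseteq B$, reading the Wu--Zhang identity in the opposite direction: it expresses $\injdim({}_BB)$ as $\injdim(\kk)+\pd_L(\kk)=\pd_L(\kk)$, so by step (i) we get $\pd_L(\kk)=d<\infty$. Hence $\kk$ is of type $\mathbf{FP}$ as an $L$-module, and feeding this back through the same identity for the trivial $L$-Galois extension $\kk\subseteq L$ yields $\injdim(L_L)=\pd_L(\kk)=d$; with bijective antipode this gives $\injdim({}_LL)=d$ as well, proving (1). Note that one does \emph{not} obtain $\gld(L)=d$, consistently with Example \ref{ex}. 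For (2), transporting the skew Calabi--Yau datum of $B$ from step (i) through the left $L$-Galois structure shows that $\Ext^i_L(\kk,L)$ vanishes for $i\neq d$ and is one-dimensional for $i=d$ --- the Nakayama automorphism of $B$ descending to the homological-determinant character of $L$ --- and together with $\injdim(L_L)=\injdim({}_LL)=d$ this is exactly the assertion that $L$ is AS Gorenstein of dimension $d$.

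The main obstacle is step (i): controlling the \emph{injective} dimension --- not merely the global dimension, where one could invoke Theorem \ref{mMTE-gld-thm} and \cite{Bic2022,WYZ2017} --- across a faithfully flat Hopf Galois extension with no Noetherian or PI hypothesis. Two points make this delicate. First, $B$ carries no natural augmentation, so one cannot speak of its AS Gorenstein property directly; one must work instead with the augmentation-free skew Calabi--Yau formulation and recover genuine AS Gorenstein-ness only after descending to $L$, which does have an augmentation. Second, the identities of \cite{WZ2003} are proved under Noetherian assumptions, so the portion of them used above must be re-established assuming only faithful flatness of the extension together with the type-$\mathbf{FP}_\infty$ finiteness --- and it is exactly here that both the $\mathbf{FP}_\infty$ hypothesis and the bijectivity of the antipodes are indispensable.
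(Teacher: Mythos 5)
Your global strategy---realize the equivalence by an $L$-$H$-biGalois object $B$, push data from $H$ to $B$ along the right $H$-Galois structure, then from $B$ down to $L$ along the left $L$-Galois structure---is the paper's. Step (i) is essentially what the paper does: it obtains $\pd({}_{B^e}B)=d$, resp.\ the skew Calabi--Yau property of $B$, from Stefan's spectral sequence together with the isomorphism $\Ext^i_{B^e}(B,B^e)\cong B\otimes\Ext^i_{H^{op}}(\kk,H)$ (Lemma \ref{Ext-lem}, Theorem \ref{sCY-B-vs-AS-Gorenstein-H}(1),(2)).

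The gap is in step (ii). You transfer from $B$ back to $L$ via an identity $\injdim({}_BB)=\injdim(\kk)+\pd({}_L\kk)$ and conclude $\pd({}_L\kk)=d<\infty$. No such identity holds, and the conclusion is false: for any Hopf algebra $\pd({}_L\kk)=\gld(L)$ (\cite[Corollary 1.4]{BG1997}, quoted just before Corollary \ref{pdim-B-as-B-B-bimodule}), so ``$\pd({}_L\kk)=d$'' \emph{is} the statement $\gld(L)=d$, which you yourself note does not follow and which Example \ref{ex} refutes ($\gld(H)=1$ while $\gld(L)=\infty$ for monoidally Morita--Takeuchi equivalent $H$ and $L$). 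The subsequent equality $\injdim(L_L)=\pd({}_L\kk)$ fails for the same reason whenever $L$ is AS Gorenstein of infinite global dimension. The quantity the Wu--Zhang-type formula attaches to the coacting Hopf algebra is its \emph{injective} dimension (as in Theorem \ref{PI-Hopf-Galois-ext-is-AS-Gorenstein}), not $\pd$ of the trivial module, and invoking that version here would be circular since $\injdim(L)$ is the unknown. What is actually needed are two one-sided bounds that never mention $\pd({}_L\kk)$: the lower bound $\injdim(L)\geq d$ from $0\neq\Ext^d_{B^e}(B,B^e)\cong B\otimes\Ext^d_{L^{op}}(\kk,L)$ (Lemma \ref{Ext-lem} read for the $L^{cop}$-Galois structure, using $\mathbf{FP}_\infty$ and Lemma \ref{proj-dim-pseudo-coherent-mod}), and the upper bound $\injdim({}_LL)\leq\pd({}_{B^e}B)=d$, which is Proposition \ref{id(L)-leq-pd(B)} and rests on the Ehresmann--Schauenburg bialgebroid $L(B,B,H)$ and the equivalence ${}_{L}\M\simeq{}_B\M^H_B$ (Lemmas \ref{Ext_L-Ext_R-otimes-B}--\ref{pd-V-otimes-B-leq-V}). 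That bialgebroid machinery is the missing ingredient; without it, or a substitute for the upper bound, step (ii) does not go through, and part (2) inherits the same gap since it needs $\injdim(L)=d$ before Lemma \ref{AS Gorenstein-lem} can be applied.
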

 
Based on this, we pose the following question.

\begin{ques}\label{main-ques}
	Let $H$ and $L$ be two Hopf algebras that are Morita-Takeuchi equivalent. 
	Does it follows that $\injdim (L)  = \injdim (H)$?
\end{ques}

We are able to answer this question under the AS Gorenstein assumption.

\begin{thm}\label{main-thm-2}
	Let $H$ and $L$ be two Hopf algebras with bijective antipodes that are monoidally Morita-Takeuchi equivalent.
	If both $L$ and $H$ are AS Gorenstein of type $\mathbf{FP}_{\infty}$, then $\injdim (L)  = \injdim(H)$.
\end{thm}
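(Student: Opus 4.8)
The plan is to funnel everything through the connecting biGalois object and to compute its injective dimension in two ways.

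\smallskip
\noindent\emph{Producing the biGalois object.} Since $H$ and $L$ are monoidally Morita--Takeuchi equivalent, Schauenburg's theorem recalled above furnishes an $L$-$H$-biGalois object $B$: an $(L,H)$-bicomodule algebra that is simultaneously a faithfully flat right $H$-Galois object with $B^{\operatorname{co}H}=\kk$ and a faithfully flat left $L$-Galois object with $B^{\operatorname{co}L}=\kk$. Because the antipodes of $H$ and $L$ are bijective, the left $L$-comodule algebra structure of $B$ can be rewritten as a right comodule algebra structure over a Hopf algebra with bijective antipode, so that the $H$-side and the $L$-side can be treated on exactly the same footing. The type $\mathbf{FP}_{\infty}$ hypotheses on $H$ and $L$ guarantee that their trivial modules admit resolutions by finitely generated projectives, so that all $\Ext$ modules appearing below are finitely generated and are preserved under the faithfully flat base change $\kk\to B$.

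\smallskip
\noindent\emph{Computing $\injdim(B)$.} The canonical Galois map identifies $B\otimes_{\kk}B$ with a crossed (smash-type) product $B\# H$ of $B$ by $H$, free of infinite rank as a one-sided $B$-module. Restricting along $B\hookrightarrow B\# H$ and plugging a resolution of the trivial $H$-module by finitely generated projective $H$-modules into the associated change-of-rings spectral sequence produces a K\"unneth-type comparison in which, because $H$ is AS Gorenstein of type $\mathbf{FP}_{\infty}$, the groups $\Ext^{i}_{H}(\kk,H)$ are one dimensional and concentrated in degree $\injdim(H)$; hence the spectral sequence collapses and yields $\injdim(B_{B})=\injdim({}_{B}B)=\injdim(H)$. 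This is, in effect, an $\mathbf{FP}_{\infty}$-analogue of Theorem \ref{PI-Hopf-Galois-ext-is-AS-Gorenstein} applied to the extension $\kk\subseteq B$, where the base contributes injective dimension $0$. Repeating the computation verbatim with the left $L$-Galois structure of $B$ in place of its right $H$-Galois structure gives $\injdim(B_{B})=\injdim({}_{B}B)=\injdim(L)$. Comparing the two outcomes gives $\injdim(H)=\injdim(B)=\injdim(L)$, which is the assertion.

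\smallskip
\noindent\emph{The main obstacle.} The crux is to carry out the middle step with no noetherian or PI hypothesis available: one must prove both that $\injdim(B)$ is finite and that it equals $\injdim(H)$. The difficulty is that over the trivial base $\kk$ Schneider's structure theorem makes every relative $(B,H)$-Hopf module free, so the homological size of $B$ is invisible to the Galois correspondence and has to be recovered from $H$ by descent along the faithfully flat map $\kk\to B$ together with the isomorphism $B\otimes_{\kk}B\cong B\# H$; this forces one to track the $H$-coactions carefully so that the K\"unneth/change-of-rings spectral sequence both converges and degenerates. It is precisely here that the type $\mathbf{FP}_{\infty}$ hypothesis (to keep the relevant resolutions finitely generated) and the AS Gorenstein hypotheses on \emph{both} $H$ and $L$ (rather than on $B$, which need not even be augmented) enter the argument; in particular the finiteness of $\injdim(B)$ is not formal and genuinely uses the AS Gorenstein condition on $H$.
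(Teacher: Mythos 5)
Your overall strategy---use Schauenburg's biGalois object $B$ as a bridge and compute a homological invariant of $B$ from each of its two Galois structures---is the same as the paper's. But the invariant you chose, the one-sided self-injective dimension $\injdim(B)$, is not the one the paper uses, and your middle step ``$\injdim(B_B)=\injdim({}_BB)=\injdim(H)$'' has a genuine gap. To compute $\injdim(B_B)$ you must control $\Ext^i_{B^{op}}(M,B)$ for \emph{arbitrary} right $B$-modules $M$ (no noetherian hypothesis is available to reduce to cyclic or finitely generated ones). The relevant spectral sequence (Lemma \ref{H-Galois-spectral-sequenceII} with $A=\kk$) collapses to $\Ext^i_{B^{op}}(M,B)\cong\Ext^i_{H^{op}}(\kk,\Hom(M,B))$, where $\Hom(M,B)$ carries the Miyashita--Ulbrich action; but this $H$-module is only known to be free (indeed a Hopf module in $\M^H_H$, hence trivial by the fundamental theorem) when $M$ is finite dimensional, i.e.\ of type $\mathbf{FP}_\infty$ over the base $\kk$ --- that is exactly the hypothesis in Lemmas \ref{B-otimes-H-mod-str} and \ref{(-)^A-free}. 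For infinite-dimensional $M$ the module $\Hom(M,B)$ need not even be rational, and since $\pd(\kk_H)$ can be infinite for an AS Gorenstein, non-regular $H$, you cannot bound the degrees in which $\Ext^i_{H^{op}}(\kk,\Hom(M,B))$ survives. The lower bound is also not formal: $B$ is not augmented and may have no finite-dimensional modules at all, so you cannot simply test against $\kk$. Your ``K\"unneth/change-of-rings collapse'' therefore does not deliver $\injdim(B)=\injdim(H)$; you have correctly identified this as the crux but not actually overcome it. (A smaller inaccuracy: the Galois map $\beta:B\otimes B\to B\otimes H$ is not an algebra map onto a smash product; Lemma \ref{B-otimes-B=B-otimes-H} only makes it an isomorphism of right $B\otimes H$-modules.)

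The paper sidesteps all of this by comparing instead the \emph{bimodule} groups $\Ext^i_{B^e}(B,B^e)$. Here the test object $B^e$ is fixed, Stefan's spectral sequence over the base $\kk$ degenerates for free, $B^e\cong B\otimes H$ is a free right $H$-module by Lemma \ref{B-otimes-B=B-otimes-H}, and the $\mathbf{FP}_\infty$ hypothesis lets one pull the tensor factor out, giving $\Ext^i_{B^e}(B,B^e)\cong B\otimes\Ext^i_{H^{op}}(\kk,H)$ (Lemma \ref{Ext-lem}). Running the same computation with the left $L$-Galois structure (equivalently, the right $L^{cop}$-Galois structure) identifies the same groups with $\Ext^i$ of $\kk$ against $L$. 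Hence the sets of degrees where $\Ext^i_{H^{op}}(\kk,H)$ and $\Ext^i_{L^{op}}(\kk,L)$ are nonzero coincide, and the AS Gorenstein hypothesis on both sides reduces each set to the singleton $\{\injdim(H)\}$, resp.\ $\{\injdim(L)\}$. If you want to salvage your route, replace $\injdim(B)$ by $\sup\{i\mid\Ext^i_{B^e}(B,B^e)\neq 0\}$ throughout.
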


Notice that any noetherian AS Gorenstein Hopf algebra has a bijective antipode, as established in \cite{LOWY2018}.
The following corollary is a direct consequence of the famous result established by Wu and Zhang in \cite[Theorem 0.2]{WZ2003}, which states that any noetherian affine PI Hopf algebra is AS Gorenstein.

\begin{cor}\label{PI-Hopf-alg-mMT-preserve-injdim-cor}
	Let $L$ and $H$ be two noetherian affine PI Hopf algebras.
	If they are monoidally Morita-Takeuchi equivalent, then $\injdim (L)  = \injdim(H)$.
\end{cor}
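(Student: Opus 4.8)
The plan is simply to verify that $H$ and $L$ meet every hypothesis of Theorem \ref{main-thm-2} and then invoke that theorem. First I would apply the Wu--Zhang theorem \cite[Theorem 0.2]{WZ2003}, which says that every noetherian affine PI Hopf algebra is AS Gorenstein; applied to $H$ and to $L$ separately, this shows both are AS Gorenstein. Next, since $H$ and $L$ are noetherian, the trivial module $\kk$ has a resolution by finitely generated projective modules in every degree, so $H$ and $L$ are of type $\mathbf{FP}_{\infty}$, as remarked in the introduction.

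It then remains to check the hypothesis on the antipodes: by \cite{LOWY2018}, any noetherian AS Gorenstein Hopf algebra has bijective antipode, and we have just established that $H$ and $L$ are both noetherian and AS Gorenstein. Thus the antipodes of $H$ and $L$ are bijective. With the monoidal Morita--Takeuchi equivalence assumed in the statement, together with bijectivity of antipodes, the AS Gorenstein property, and type $\mathbf{FP}_{\infty}$ now all in hand, Theorem \ref{main-thm-2} applies and gives $\injdim(L) = \injdim(H)$.

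There is essentially no genuine obstacle here; the corollary is a bookkeeping assembly of Theorem \ref{main-thm-2} with the quoted structural results. The only point that deserves a moment's care is confirming that the notion of AS Gorenstein used in \cite{WZ2003} and \cite{LOWY2018} is the same one appearing in the hypotheses of Theorem \ref{main-thm-2} (finite injective dimension together with the Artin--Schelter condition), and that monoidal Morita--Takeuchi equivalence is a symmetric relation so that it may be used in either direction between $H$ and $L$. Once these identifications are noted, the argument is immediate.
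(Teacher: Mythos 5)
Your proposal is correct and follows exactly the paper's intended argument: apply Wu--Zhang to get that both Hopf algebras are AS Gorenstein, note that noetherianity gives type $\mathbf{FP}_{\infty}$ and that \cite{LOWY2018} gives bijectivity of the antipodes, and then invoke Theorem \ref{main-thm-2}. No issues.
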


A confirmation of the Brown-Goodearl conjecture (Conjecture \ref{BG-conj}) would significantly advance the resolution of Question \ref{main-ques}, especially when both $L$ and $H$ are noetherian.

%

%

The paper is structured as follows.
In Section 1, we introduce the fundamental definitions and characteristics of Hopf Galois extensions. Additionally, we delve into the investigation of homological dimensions by utilizing Stefan's spectral sequence.
Section 2 focuses on establishing that a faithfully flat Hopf Galois extension of an AS Gorenstein algebra retains the AS Gorenstein property.
Section 3 examines the injective dimension of Ehresmann-Schauenburg bialgebroids.
This investigation sets the stage for Section 4, where we show that a Hopf algebra $H$ is AS Gorenstein of dimension $d$ if it admits a faithfully flat $H$-Galois extension $B$ over the base field $\kk$, with $B$ being a skew Calabi-Yau algebra of dimension $d$.
In Section 4, we also elaborate on the implications of our results, focusing particularly on Theorems \ref{main-thm-1} and \ref{main-thm-2}. Furthermore, we offer a counterexample to respond to Question \ref{ques-0}.

\section{Preliminaries}

Throughout this paper, $\kk$ is a base field, and all vector spaces and algebras are over $\kk$. Unadorned $\otimes$ means $\otimes_{\kk}$ and $\Hom$ means $\Hom_{\kk}$.
Let $R$ be an algebra over the base field $\kk$. Let $R^{op}$ denote the opposite ring of $R$, and let $R^e$ denote the enveloping algebra $R \otimes R^{op}$.
Observe that the flip map, which sends $a \otimes b$ to $b \otimes a$, constitutes an anti-automorphism of the algebra $R^e$.
Usually we work with left modules.
A right $R$-module is viewed as an $R^{op}$-module, and an $R$-$R$-bimodule is the same as an $R^e$-module.

Let $M$ be a left module over an algebra $R$.
Let $\pdim({_RM})$ denote the projective dimension of $M$ and $\injdim({_RM})$ the injective dimension of $M$.
Let $\lgld(R)$ (resp., $\rgld(R)$) denote the left (resp., right) global dimension of $R$.
We say that $R$ has finite injective dimension if the modules $_RR$ and $R_R$ have finite injective dimensions which are equal, and denote these numbers by $\injdim(R)$.
We say that $R$ has finite global dimension if the left and right global dimensions of $R$ are finite and
equal, and denote these numbers by $\gld(R)$.

Let $H$ stand for a Hopf algebra $(H, \Delta, \vep)$ with antipode $S$.
We use the Sweedler notation $\Delta(h) = \sum_{(h)} h_1 \otimes h_2$ for all $h \in H$.
We recommend \cite{Mon1993} as a basic reference for the theory of Hopf algebras.

The antipode of a finite-dimensional Hopf algebra is bijective \cite{LS1969}.
Skryabin showed that the antipode of a noetherian Hopf algebra is injective and hypothesized it to be bijective \cite{Skr2006}.
Le Meur confirmed the bijectivity of the antipode for Hopf algebras with Van den Bergh duality \cite{LMeu2019}.
Furthermore, noetherian AS Gorenstein Hopf algebras are guaranteed to have a bijective antipode \cite{LOWY2018}.
Nonetheless, there exist Hopf algebras with non-bijective antipodes, as seen in \cite{Tak1971} and \cite{Sch2000a}.
Let $H$ be a Hopf algebra with a bijective antipode $S$.
Since the antipode $S: H \to H^{op}$ is an algebra isomorphism, left and right injective dimensions of $H$ coincide.
In the rest of this paper, the antipode of the given Hopf algebra is always assumed to be bijective.

\subsection{Faithfully flat Hopf Galois extensions}

Let $H$ be a Hopf algebra over the field $\kk$, and $B$ be a right $H$-comodule algebra via an algebra homomorphism $\rho_B: B \to B \otimes H$. The {\it coinvariant subalgebra} of the $H$-coaction on $B$ is $\{ b \in B \mid \rho_B(b) = b \otimes 1 \}$, denoted by $B^{co H}$.

\begin{defn}
	The extension $A(:= B^{co H}) \subset B$ is called (right) $H$-{\it Galois} if the canonical map
	\begin{center}
		$\beta: B \otimes_A B \to B \otimes H, \qquad b' \otimes_A b \mapsto b'\rho(b) = \sum\limits_{(b)} b'b_0 \otimes b_1$
	\end{center}
	is bijective.
\end{defn}

Let $R$ be an algebra, and $_R\!\M$ (resp., $\M_R$) be the category of left (resp., right) $R$-modules.
If $R$ is a right $H$-comodule algebra via $\rho_R: R \to R \otimes H$, then a relative right-right $(R, H)$-Hopf module is a vector space with a right $R$-action and a right $H$-coaction $\rho_M$ such that
\[ \rho_M(mr) = \rho_M(m)\rho_R(r) = \sum_{(m), (r)} m_0r_0 \otimes m_1r_1 \]
for all $m \in M$ and $r \in R$.
A relative left-right $(R, H)$-Hopf module is defined similarly.
The category of relative right-right (resp., left-right) $(R, H)$-Hopf modules is denoted by $\M^H_R$ (resp., $_R\!\M^H$).

We require some known facts about Hopf Galois extensions.

\begin{lem}\cite[(1.6)]{Doi1985}
	\cite[Remark 3.3]{Sch1990}\label{Hopf-module-injective}
	Let $B$ be a right $H$-comodule algebra. Then the following are equivalent.
	
	(1) $B$ is an injective right $H$-comodule.
	
	(2) There is a right $H$-colinear and unitary map $\varphi: H \to B$.
	
	(3) Any Hopf module in $\M^H_B$ is injective as right $H$-comodule.
\end{lem}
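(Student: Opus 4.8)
The plan is to establish the cyclic chain $(3) \Rightarrow (1) \Rightarrow (2) \Rightarrow (3)$, concentrating the work in the last implication. Recall that $\M^H$ is a Grothendieck category in which the cofree comodules $V \otimes H$ (for a $\kk$-vector space $V$, with coaction $\id_V \otimes \Delta$) are injective, being the values of the right adjoint of the exact forgetful functor from $\M^H$ to $\kk$-vector spaces; in particular every retract in $\M^H$ of a cofree comodule is injective.

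For $(3) \Rightarrow (1)$, note that $B$, with its regular right action and the coaction $\rho_B$, lies in $\M^H_B$, so $(3)$ applied to $M = B$ gives that $B$ is injective in $\M^H$. For $(1) \Rightarrow (2)$, observe that the unit $u \colon \kk \to H$, $1 \mapsto 1_H$ (with $\kk$ carrying the trivial coaction), is $H$-colinear since $\Delta(1_H) = 1_H \otimes 1_H$, and injective, hence a monomorphism in $\M^H$; the unit map $\iota \colon \kk \to B$ is likewise $H$-colinear because $\rho_B(1_B) = 1_B \otimes 1_H$. Injectivity of $B$ in $\M^H$ then extends $\iota$ along $u$ to an $H$-colinear map $\varphi \colon H \to B$ with $\varphi(1_H) = 1_B$, which is the desired unitary colinear map.

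The substantive implication is $(2) \Rightarrow (3)$. Fix an $H$-colinear unitary map $\varphi \colon H \to B$ and $M \in \M^H_B$, and write $\rho_M(m) = \sum m_0 \otimes m_1$ with $m_0 \in M$, $m_1 \in H$. By coassociativity and the counit axiom, $\rho_M \colon M \to M \otimes H$ is an $H$-colinear monomorphism of $M$ into the cofree comodule $M \otimes H$; I would show it splits in $\M^H$ via
\[
\psi \colon M \otimes H \To M, \qquad \psi(m \otimes h) = \sum m_0 \cdot \varphi\big(S(m_1)\, h\big),
\]
where $\cdot$ denotes the right $B$-action on $M$. The identity $\psi \circ \rho_M = \id_M$ is a short computation using coassociativity, $\sum S(x_1) x_2 = \vep(x) 1_H$, and $\varphi(1_H) = 1_B$. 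The heart of the matter is that $\psi$ is $H$-colinear; this is exactly where the hypotheses enter, through the compatibility $\rho_M(mb) = \rho_M(m) \rho_B(b)$ for $m \in M$ and $b \in B$, the colinearity of $\varphi$ in the form $\rho_B(\varphi(z)) = \sum \varphi(z_1) \otimes z_2$, and the anti-comultiplicativity $\Delta \circ S = (S \otimes S) \circ \tau \circ \Delta$ of the antipode. Carrying these through, $\rho_M(\psi(m \otimes h))$ rewrites as a four-leg Sweedler expression in which an adjacent pair collapses via $\sum x_1 S(x_2) = \vep(x) 1_H$, leaving $\sum m_0 \cdot \varphi(S(m_1) h_1) \otimes h_2$, which is precisely $\sum \psi(m \otimes h_1) \otimes h_2$. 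Hence $M$ is a retract in $\M^H$ of an injective comodule and is therefore injective, completing the cycle. I expect the only genuine obstacle to be this last colinearity verification: it is a routine but bookkeeping-heavy Sweedler computation in which one must keep careful track of which comodule legs are being contracted.
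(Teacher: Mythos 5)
The paper does not prove this lemma; it is quoted from Doi \cite[(1.6)]{Doi1985} and Schneider \cite[Remark 3.3]{Sch1990}, so there is no in-paper proof to compare against. Your argument is correct and is essentially the standard proof from those references: $(3)\Rightarrow(1)$ by taking $M=B$, $(1)\Rightarrow(2)$ by extending the colinear unit $\kk\to B$ along $\kk\to H$, and $(2)\Rightarrow(3)$ by exhibiting $M$ as a retract of the cofree (hence injective) comodule $M\otimes H$ via the splitting $\psi(m\otimes h)=\sum m_0\cdot\varphi(S(m_1)h)$ of $\rho_M$, whose colinearity follows exactly as you indicate from the Hopf-module compatibility, the colinearity of $\varphi$, and $\Delta\circ S=(S\otimes S)\circ\tau\circ\Delta$.
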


\begin{thm}\cite[Theorem I]{Sch1990}\label{faithful-flat-Hopf-Galois-extension-thm} 
	Let $H$ be a Hopf algebra with a bijective antipode, $B$ be a right $H$-comodule algebra and $A := B^{co H}$. Then the following are equivalent.
	
	(1) $B$ is injective as right $H$-comodule, and $\beta: B \otimes_A B \to B \otimes H$ is surjective.
	
	(2) $(- \otimes_A B, (-)^{co H})$ is an equivalence between $\M_A$ and $\M^H_B$. 
	
	(3) $(B \otimes_A -, (-)^{co H})$ is an equivalence between $_A\!\M$ and $_B\!\M^H$. 
	
	(4) $B$ is a faithfully flat left $A$-module, and $A \subseteq B$ is an $H$-Galois extension.
	
	(5) $B$ is a faithfully flat right $A$-module, and $A \subseteq B$ is an $H$-Galois extension.
	
\end{thm}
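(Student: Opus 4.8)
\emph{Proof strategy.}
This is Schneider's structure theorem for faithfully flat Hopf--Galois extensions \cite{Sch1990}, and my plan is to reconstruct his argument. Its spine is the adjunction $F:=-\otimes_A B\colon\M_A\to\M^H_B$ with right adjoint $G:=(-)^{co H}\colon\M^H_B\to\M_A$: for $M\in\M_A$ one makes $M\otimes_A B$ a relative Hopf module by coacting with $\rho_B$ on the right tensorand, the unit is $\eta_M\colon M\to(M\otimes_A B)^{co H}$, $m\mapsto m\otimes 1$, and the counit is $\vep_N\colon N^{co H}\otimes_A B\to N$, $n\otimes b\mapsto nb$; there is also the left-module mirror $B\otimes_A-\colon{}_A\M\to{}_B\M^H$. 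The one useful preliminary observation is that, viewing $B\otimes H$ in $\M^H_B$ via $\id_B\otimes\Delta$, there is a natural identification $(B\otimes H)^{co H}\cong B$ under which $\vep_{B\otimes H}$ is exactly the canonical map $\beta$. Hence, once the adjunction is an equivalence (assertion (2)), $\beta=\vep_{B\otimes H}$ is bijective, i.e.\ $A\subseteq B$ is $H$-Galois; and an equivalence of module categories is exact and faithful while the forgetful functor $\M^H_B\to\M_{\kk}$ is faithful and exact, so $-\otimes_A B\colon\M_A\to\M_{\kk}$ is faithful and exact, whence $B$ is faithfully flat as a left $A$-module. Thus $(2)\Rightarrow(4)$, and symmetrically $(3)\Rightarrow(5)$.

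The substance is $(1)\Rightarrow(2)$, which I would establish in stages. (a) \emph{$\beta$ surjective $\Rightarrow$ $\beta$ bijective.} One checks that $B\otimes_A B\in\M^H_B$ (coaction on the right factor) and that $\beta$ is a morphism there; put $K:=\ker\beta$. Since $B$ is an injective $H$-comodule, Lemma~\ref{Hopf-module-injective} forces $K$, $B\otimes_A B$ and $B\otimes H$ to be injective as $H$-comodules, so $0\to K\to B\otimes_A B\xrightarrow{\beta}B\otimes H\to0$ splits as a sequence of $H$-comodules; applying the left-exact functor $(-)^{co H}$, comparing with the canonical copy $B\otimes_A 1\subseteq(B\otimes_A B)^{co H}$ on which $\beta^{co H}$ already restricts to an isomorphism, and exploiting the $H$-colinear unitary section $\varphi\colon H\to B$ provided by Lemma~\ref{Hopf-module-injective}(2), one pins down $K=0$. (b) \emph{The relative fundamental theorem of Hopf modules.} With $\beta$ bijective, introduce the translation map $\tau\colon H\to B\otimes_A B$, $\tau(h)=\beta^{-1}(1\otimes h)$, establish its standard $H$-colinearity, $A$-bilinearity and ``anti-multiplicativity'' identities (bijectivity of $S$ is handy here), and use it to write down an explicit inverse of $\vep_N$ for every $N\in\M^H_B$; thus $\vep$ is a natural isomorphism. (c) \emph{Faithful flatness and the unit.} One shows $B$ is faithfully flat as a left $A$-module --- here the injectivity of $B$ as a comodule is used essentially, e.g.\ to produce an $A$-linear retraction $E\colon B\to A$ of the inclusion (so that $A$ is an $A$-module direct summand of $B$) --- which together with flatness forces $\eta_M\otimes_A B$, an isomorphism by the triangle identity and (b), to descend to an isomorphism $\eta_M$ for every $M$; this yields (2). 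For $(2)\Rightarrow(1)$: the equivalence makes $\beta=\vep_{B\otimes H}$ bijective (hence surjective), while $\varphi(h):=\sum\vep\bigl(\tau(h)^{(1)}\bigr)\tau(h)^{(2)}$ is an $H$-colinear unitary map $H\to B$, so $B$ is an injective $H$-comodule by Lemma~\ref{Hopf-module-injective}. The implications $(1)\Leftrightarrow(3)$ and $(4)\Rightarrow(1)$, $(5)\Rightarrow(1)$ are the left/right mirrors of, or immediate from, what precedes; in particular $(4)\Leftrightarrow(5)$, the interchange of sides being powered by bijectivity of $S$, which converts the right $H$-comodule algebra $B$ into a left one and matches $\M^H_B$ with ${}_B\M^H$.

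I expect the genuine obstacle to be stage (a) --- that a surjective canonical map $\beta$ is automatically injective as soon as $B$ is an injective comodule is not formal, and it is here that the algebra and coalgebra structures must be played against each other through $\varphi$ and the translation map --- together with the flatness half of stage (c), which likewise uses the injectivity hypothesis in an essential, non-formal way rather than being a diagram chase. The adjunction bookkeeping, the translation-map identities of (b), the faithfulness of $-\otimes_A B$, and the left/right symmetry via $S$ are, by contrast, routine once one commits to the computations.
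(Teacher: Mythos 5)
The paper gives no proof of this statement: it is quoted verbatim as Schneider's Theorem I from \cite{Sch1990} and used as a black box, so there is no in-paper argument to compare your reconstruction against. Judged on its own terms, your outline has the right skeleton --- the adjunction $(-\otimes_A B,(-)^{co H})$, the identification of $\beta$ with the counit at $B\otimes H$, the translation map, the total integral $\varphi:H\to B$, and the use of the bijective antipode to pass between the left and right versions --- and the easy implications $(2)\Rightarrow(4)$, $(3)\Rightarrow(5)$, $(2)\Rightarrow(1)$ are handled correctly.

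However, the two steps you yourself flag as ``the genuine obstacle'' are not actually proved, and at least one of them would fail as sketched. In stage (a) you split $0\to K\to B\otimes_A B\xrightarrow{\beta}B\otimes H\to 0$ as a sequence of injective $H$-comodules and propose to ``pin down $K=0$'' by taking coinvariants and comparing with $B\otimes_A 1$. This cannot close: a nonzero injective $H$-comodule, even a nonzero object of $\M^H_B$, can have zero coinvariants (take $H=\kk G$ and a graded piece concentrated away from the identity), so $K^{co H}=0$ does not give $K=0$ unless you already know that $(-)^{co H}$ is faithful on $\M^H_B$ --- which is part of assertion (2), the very thing being proved. Schneider's actual route is to prove directly, using the total integral and surjectivity of $\beta$, that the counit $\vep_N:N^{co H}\otimes_A B\to N$ is bijective for \emph{every} $N\in\M^H_B$, and to obtain injectivity of $\beta$ as the special case $N=B\otimes H$; the injectivity half of that argument is a genuinely nontrivial computation with the integral, not a splitting-plus-coinvariants chase. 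Likewise in stage (c) the faithful flatness of $_AB$ is asserted to follow from an $A$-linear retraction $E:B\to A$ whose construction you do not give, and flatness itself (not just faithfulness) also requires an argument. Since these two steps carry essentially all the content of the theorem, the proposal is a correct road map with the destination missing rather than a proof.
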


Since conditions (4) and (5) in Theorem \ref{faithful-flat-Hopf-Galois-extension-thm} are equivalent, we say a Hopf Galois extension $A \subseteq B$ is faithfully flat if $B$ is faithfully flat as a left or right $A$-module.

Let $A(:= B^{co H}) \subset B$ be an $H$-Galois extension.
For any $A$-$A$-bimodule $M$, we set 
\[ M^A := \{ m \in M \mid ma = am, \forall \, a \in A \}, \qquad M_A := M/[M, A] \]
where $[M, A]$ is the subspace of $M$ generated by all commutators $[m, a] := ma - am$.

Following \cite{Brz1996} we shall call the $\kk$-linear map
\[ \kappa: H \to (B \otimes_A B)^A, \; h \mapsto \beta^{-1}(1 \otimes h) \]
the {\it translation map} associated to the $H$-Galois extension $A \subseteq B$.
For any $h \in H$ we shall use the notation
$$\kappa(h) = \sum \kappa^1(h) \otimes_A \kappa^2(h).$$

The following are key properties of $\kappa$ as outlined in \cite{Sch1990a}:
\begin{equation}\label{kappa(hk)}
	\sum\kappa^1(hk) \otimes_A \kappa^2(hk) = \sum \kappa^1(k)\kappa^1(h) \otimes_A \kappa^2(h)\kappa^2(k),
\end{equation}
\begin{equation}\label{akappa(h)=kappa(h)a}
	\sum a\kappa^1(h) \otimes_A \kappa^2(h) = \sum \kappa^1(h) \otimes_A \kappa^2(h)a,
\end{equation}
\begin{equation}\label{kappa^1-kappa^2_0-kappa^2_1}
	\sum\limits_{(h)} \kappa^1(h) \otimes_A \kappa^2(h)_0 \otimes \kappa^2(h)_1 = \sum\limits_{(h)} \kappa^1(h_1) \otimes_A \kappa^2(h_1) \otimes h_2,
\end{equation}
\begin{equation}\label{kappa^1_0-kappa^2-kappa^1_1}
	\sum\limits_{(h)} \kappa^1(h)_0 \otimes_A \kappa^2(h) \otimes \kappa^1(h)_1 = \sum\limits_{(h)} \kappa^1(h_2) \otimes_A \kappa^2(h_2) \otimes Sh_1,
\end{equation}
\begin{equation}\label{m-kappa=vep}
	\sum \kappa^1(h)\kappa^2(h) = \vep(h),
\end{equation}

Suppose that $B^{coH} = A$.
Then for any $B$-$B$-bimodule $M$, $M^A$ is a right $H$-module which is defined by
\begin{eqnarray}\label{H-mod-on-M^A}
	x \lhu h = \sum\limits_{(h)} \kappa^1(h) x \kappa^2(h)
\end{eqnarray}
for any $x \in M^A$ and $h \in H$;
and $M_A$ is a left $H$-module which is defined by
$$h \rhu \big( x +  [A, M] \big) = \sum\limits_{(h)} \kappa^2(h) x \kappa^1(h) + [A, M]$$
for any $x \in M$ and $h \in H$, see \cite[2.1, 2.2]{JS2006}.
In prior literature, specifically \cite{DT1989} and \cite{Ste1995}, both these structures have been introduced and meticulously examined, where they are recognized as Ulbrich-Miyashita actions.
A crucial point to highlight is the functorial nature of these actions; in essence, this means that there exist two well-defined functors given by:
\[ (-)^A: {_B\!\M_B} \To {\M_H}, \quad (-)_A: {_B\!\M_B} \To {_H\!\M}. \]

\subsection{Stefan's spectral sequence for flat Hopf Galois extensions}

We say a Hopf Galois extension $A \subseteq B$ is flat if $B$ is flat both as a left and right $A$-module.
In this subsection, we give some results about homological properties of flat Hopf Galois extensions by using the Stefan's spectral sequence.


For any given flat Hopf-Galois extension $A \subseteq B$, Stefan in his work \cite{Ste1995} devised a right $H$-module structure on the Hochschild cohomology group $\mathrm{HH}^i(A, M)$, which is isomorphic to $\Ext^i_{A^e}(A, M)$, for any $B^e$-module $M$.
This construction was achieved by employing universal $\delta$-functors and serves as an extension of the Miyashita-Ulbrich action.

The inherent flatness property of $_{A^e}B^e$ ensures that every injective $B^e$-module also maintains its injectivity when considered as an $A^e$-module. Consequently, the derived right $H$-module structure on $\Ext^i_{A^e}(A, M)$ can be effectively realized by computing homological groups from the complex $\Hom_{A^e}(A, E^{\bullet})$, where $E^{\bullet}$ is a $B^e$-injective resolution of $M$.

Subsequently, Stefan further developed in \cite{Ste1995} a spectral sequence that establishes a connection between the Hochschild cohomologies of $A$ and those of its flat Hopf-Galois extension $B$.

\begin{thm}\cite[Theorem 3.3]{Ste1995}\label{H-Galois-spectral-sequence}
	Let $A \subseteq B$ be a flat $H$-Galois extension. For any $B^e$-module $M$, there is a convergent spectral sequence
	\[ \Ext^{p}_{H^{op}}(\kk, \Ext^q_{A^e}(A, M)) \Longrightarrow \Ext^{p+q}_{B^e}(B, M).\]
\end{thm}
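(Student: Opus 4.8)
The plan is to realize the functor $\Hom_{B^e}(B,-)$ on $\Mod B^e$ as the composite of the Miyashita--Ulbrich functor $F:=(-)^A=\Hom_{A^e}(A,-)\colon {_B\!\M_B}\to{\M_H}$ of \eqref{H-mod-on-M^A} with the fixed-point functor $G:=\Hom_{H^{op}}(\kk,-)=(-)^H\colon{\M_H}\to\Mod\kk$, and then to apply the Grothendieck spectral sequence of the composition $G\circ F$. Since $G\circ F$ sends a $B^e$-module $M$ to $(M^A)^H$, the first step is to establish a natural isomorphism $(M^A)^H\cong M^B=\Hom_{B^e}(B,M)$.

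For this I would argue as follows. One has $M^B\subseteq M^A$ trivially, and if $m\in M^B$ then $m\lhu h=\sum\kappa^1(h)m\kappa^2(h)=\sum\kappa^1(h)\kappa^2(h)m=\vep(h)m$ by \eqref{m-kappa=vep}, so $m\in(M^A)^H$. Conversely, for $m\in M^A$ the assignment $b\otimes_A b'\mapsto bmb'$ is a well-defined $\kk$-linear map $\phi_m\colon B\otimes_A B\to M$; from $\beta(b'\kappa^1(h)\otimes_A\kappa^2(h))=b'\otimes h$ one obtains the identity $1\otimes_A b=\sum b_0\,\kappa^1(b_1)\otimes_A\kappa^2(b_1)$ in $B\otimes_A B$, and applying $\phi_m$ together with $m\lhu h=\vep(h)m$ and the counit axiom gives $mb=\sum b_0\,(m\lhu b_1)=bm$, whence $m\in M^B$. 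Naturality in $M$ is clear, so $\Hom_{B^e}(B,-)\cong G\circ F$.

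Next I would verify the two hypotheses of the Grothendieck spectral sequence (noting that $\Mod B^e$ has enough injectives). First, because $B$ is flat on both sides over $A$, the bimodule ${}_{A^e}(B^e)$ is flat, so every injective $B^e$-module is injective as an $A^e$-module; hence the right derived functors of $F$ can be computed from a $B^e$-injective resolution and, after forgetting the $H$-action, agree with $\Ext^q_{A^e}(A,-)$ carrying Stefan's Miyashita--Ulbrich $H$-module structure, as recalled before the statement. Second, $F$ must carry injective $B^e$-modules to $G$-acyclic $H$-modules. Every injective $B^e$-module is a direct summand of a coinduced module $\Hom_\kk(B^e,V)$, and by tensor--hom adjunction, the isomorphism $A\otimes_{A^e}B^e\cong B\otimes_A B$, and the Galois isomorphism $\beta\colon B\otimes_A B\xrightarrow{\ \sim\ }B\otimes H$,
\[ \Hom_{A^e}\big(A,\Hom_\kk(B^e,V)\big)\ \cong\ \Hom_\kk(B\otimes_A B,V)\ \cong\ \Hom_\kk(B\otimes H,V)\ \cong\ \Hom_\kk\big(H,\Hom_\kk(B,V)\big). \]
Transporting the action \eqref{H-mod-on-M^A} through these isomorphisms, with the aid of \eqref{kappa(hk)}, \eqref{kappa^1-kappa^2_0-kappa^2_1} and \eqref{kappa^1_0-kappa^2-kappa^1_1} and the $H$-colinearity of $\beta$, identifies $F\big(\Hom_\kk(B^e,V)\big)$ with the coinduced $H$-module $\Hom_\kk(H,W)$ where $W=\Hom_\kk(B,V)$, and $\Ext^p_{H^{op}}(\kk,\Hom_\kk(H,W))=0$ for all $p>0$. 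Feeding these data into the Grothendieck spectral sequence yields the convergent spectral sequence $\Ext^p_{H^{op}}(\kk,\Ext^q_{A^e}(A,M))\Longrightarrow\Ext^{p+q}_{B^e}(B,M)$.

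The main obstacle is the final identification: checking that, after transport along $\beta$, the right $H$-action \eqref{H-mod-on-M^A} on $\Hom_\kk(B\otimes_A B,V)$ coincides with the standard coinduced $H$-action on $\Hom_\kk\big(H,\Hom_\kk(B,V)\big)$. This is the only place where the full strength of the translation-map identities and the compatibility of $\beta$ with the $H$-coactions is needed; granting it, the identification of the $E_2$-page and the convergence are formal consequences of the Grothendieck spectral sequence, and the comparison of $\Ext_{A^e}$ computed over $A^e$ versus over $B^e$ is handled by the flatness of ${}_{A^e}(B^e)$.
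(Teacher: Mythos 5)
The paper offers no proof of this statement: it is quoted directly from Stefan \cite{Ste1995}, so there is no internal argument to measure yours against. Your proposal is a correct proof, and it follows what is essentially the standard (and Stefan's own) route: factor $\Hom_{B^e}(B,-)$ as $\Hom_{H^{op}}(\kk,-)\circ\Hom_{A^e}(A,-)$ and run the Grothendieck spectral sequence. Your identification $(M^A)^H=M^B$ via $\phi_m$ and the identity $1\otimes_A b=\sum b_0\kappa^1(b_1)\otimes_A\kappa^2(b_1)$ is correct, and the flatness of ${}_{A^e}B^e$ is exactly what is needed to identify $R^qF$ with $\Ext^q_{A^e}(A,-)$ carrying the Miyashita--Ulbrich structure, as the paper itself recalls just before the statement. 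The one step you defer --- that the action \eqref{H-mod-on-M^A} on $\Hom(B^e,V)^A\cong\Hom(B\otimes_AB,V)$, transported along $\beta$, becomes the coinduced action on $\Hom\big(H,\Hom(B,V)\big)$ --- does go through, and more cheaply than you anticipate: setting $\widetilde{\phi}(x\otimes h)=\phi\big(x\kappa^1(h)\otimes_A\kappa^2(h)\big)$, one finds
\[
\big(\widetilde{\phi\lhu h}\big)(x\otimes k)=\phi\big(x\kappa^1(k)\kappa^1(h)\otimes_A\kappa^2(h)\kappa^2(k)\big)=\widetilde{\phi}(x\otimes hk),
\]
using only the anti-multiplicativity \eqref{kappa(hk)}; the colinearity identities \eqref{kappa^1-kappa^2_0-kappa^2_1} and \eqref{kappa^1_0-kappa^2-kappa^1_1} are not needed here. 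Since $\Hom(H,W)$ with the action $(f\lhu h)(k)=f(hk)$ is an injective right $H$-module, $F$ carries injective $B^e$-modules to $G$-acyclic objects, and the spectral sequence follows. So there is no genuine gap, only a flagged verification that is in fact a three-line computation.
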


Recall that $\pd (_H\kk) = \gld (H) = d$ by \cite[Corollary 1.4]{BG1997}.
Then we have the following immediate consequence.

\begin{cor}\label{pdim-B-as-B-B-bimodule}
	Let $A \subseteq B$ be a flat $H$-Galois extension. Then $\pd(_{B^e}B) \leq \pd(\kk_H) + \pd(_{A^e}A) = \rgld(H) + \pd(_{A^e}A)$.
\end{cor}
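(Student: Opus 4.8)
The plan is to read the bound off directly from Stefan's spectral sequence in Theorem~\ref{H-Galois-spectral-sequence}. Fix an arbitrary $B^e$-module $M$ and consider the convergent spectral sequence
\[ E_2^{p,q} = \Ext^{p}_{H^{op}}(\kk, \Ext^q_{A^e}(A, M)) \Longrightarrow \Ext^{p+q}_{B^e}(B, M). \]
First I would locate the region where $E_2^{p,q}$ can be nonzero. If $q > \pd(_{A^e}A)$, then $\Ext^q_{A^e}(A, M) = 0$ by definition of projective dimension, so $E_2^{p,q} = 0$. If $p > \pd(\kk_H)$, then $\Ext^{p}_{H^{op}}(\kk, N) = 0$ for every left $H^{op}$-module $N$ (equivalently, every right $H$-module), since $\pd(_{H^{op}}\kk) = \pd(\kk_H)$; applying this with $N = \Ext^q_{A^e}(A, M)$ again gives $E_2^{p,q} = 0$. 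Hence $E_2^{p,q} = 0$ whenever $p + q > \pd(\kk_H) + \pd(_{A^e}A)$.

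Next, since this is a first-quadrant spectral sequence and therefore converges, $\Ext^{n}_{B^e}(B, M)$ carries a finite filtration whose associated graded pieces are the terms $E_\infty^{p,q}$ with $p + q = n$, and each $E_\infty^{p,q}$ is a subquotient of $E_2^{p,q}$. Consequently $\Ext^{n}_{B^e}(B, M) = 0$ for all $n > \pd(\kk_H) + \pd(_{A^e}A)$. As $M$ ranges over all $B^e$-modules, this is exactly the statement $\pd(_{B^e}B) \le \pd(\kk_H) + \pd(_{A^e}A)$.

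For the displayed equality $\pd(\kk_H) = \rgld(H)$, I would note that $\pd(\kk_H) \le \rgld(H)$ is trivial, and that the reverse inequality is \cite[Corollary 1.4]{BG1997} read on the opposite side: applying that result to the Hopf algebra $H^{op}$ — whose antipode is bijective by our standing hypothesis — gives $\pd(_{H^{op}}\kk) = \gld(H^{op})$, where $\pd(_{H^{op}}\kk) = \pd(\kk_H)$ and $\gld(H^{op}) = \rgld(H)$ because $S \colon H \to H^{op}$ is an algebra isomorphism. There is no genuine obstacle in the argument; the only care required is keeping the left/right Hopf-module conventions straight in the $E_2$-term and recording the identification $\pd(\kk_H) = \rgld(H)$, both of which are routine bookkeeping once the antipode is assumed bijective.
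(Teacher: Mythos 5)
Your argument is correct and is exactly the intended one: the paper presents this corollary as an immediate consequence of Stefan's spectral sequence (Theorem \ref{H-Galois-spectral-sequence}) together with the cited fact $\pd(_H\kk) = \gld(H)$ from \cite[Corollary 1.4]{BG1997}, and your write-up simply makes explicit the standard vanishing-region reasoning that the paper leaves implicit. No issues.
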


In the following, we study the global dimension of faithfully flat Hopf Galois extension.

Let $M$ and $N$ be two modules over a $\kk$-algebra $R$.
Let $P_{\bullet}$ denote a projective resolution of $_{R^e}R$.
Given that the sequence $\cdots \to P_1 \to P_0 \to R \to 0$ is split exact as an $R$-module complex, the complex
\[\cdots \To P_1 \otimes_R M \To P_0 \otimes_R M \To R \otimes_R M (= M) \To 0\]
remains exact as well.
Owing to the projectivity of each module $M \otimes_R P_i$ for all $i$, the complex $M \otimes_R P_{\bullet}$ constitutes a projective resolution of the right $R$-module $M$.
Furthermore, utilizing the isomorphism
\[ \Hom_{R^e}(P_{\bullet}, \Hom(M, N)) \cong \Hom_{R^{op}}(P_{\bullet} \otimes_R M, N),\]
upon taking homology groups, we obtain the following isomorphism between Ext-groups,
\begin{equation}\label{RHom_R^e<->RHom_R}
	\Ext^i_{R^e}(R, \Hom(M, N)) \cong \Ext^i_{R^{op}}(M, N).
\end{equation}
for all $i \geq 0$.

\begin{lem}\label{H-Galois-spectral-sequenceII}
	Let $A \subseteq B$ be a flat $H$-Galois extension.
	For any right $B$-modules $M$ and $N$, there is a convergent spectral sequence
	\[ \Ext^{p}_{H^{op}}(\kk, \Ext^q_{A^{op}}(M, N)) \Longrightarrow \Ext^{p+q}_{B^{op}}(M, N).\]
\end{lem}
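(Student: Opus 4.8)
The plan is to deduce this from Stefan's spectral sequence (Theorem~\ref{H-Galois-spectral-sequence}) by feeding it the $B^e$-module $\Hom_\kk(M,N)$ and then rewriting both ends through the isomorphism~\eqref{RHom_R^e<->RHom_R}. For right $B$-modules $M$ and $N$, recall first that $\Hom_\kk(M,N)$ is a $B$-$B$-bimodule via $(b\cdot f\cdot b')(m)=f(mb)b'$, and that restricting this structure along $A\hookrightarrow B$ returns precisely the $A^e$-module $\Hom_\kk(M,N)$ associated with $M$ and $N$ regarded as right $A$-modules. Applying Theorem~\ref{H-Galois-spectral-sequence} to this $B^e$-module produces a convergent spectral sequence
\[ \Ext^{p}_{H^{op}}\bigl(\kk,\,\Ext^q_{A^e}(A,\Hom(M,N))\bigr)\Longrightarrow \Ext^{p+q}_{B^e}\bigl(B,\Hom(M,N)\bigr). \]

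Next I would apply~\eqref{RHom_R^e<->RHom_R} twice. With $R=B$ it identifies the abutment $\Ext^{p+q}_{B^e}(B,\Hom(M,N))$ with $\Ext^{p+q}_{B^{op}}(M,N)$; with $R=A$, using the restriction of bimodule structures noted above, it identifies $\Ext^q_{A^e}(A,\Hom(M,N))$ with $\Ext^q_{A^{op}}(M,N)$. Transporting Stefan's right $H$-module structure along this last isomorphism makes $\Ext^q_{A^{op}}(M,N)$ a right $H$-module, and substituting the two identifications into the displayed spectral sequence yields the claim.

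Since Theorem~\ref{H-Galois-spectral-sequence} and~\eqref{RHom_R^e<->RHom_R} are already in hand, the argument is essentially formal, and the only step with any content is checking that these structures are compatible, i.e.\ that~\eqref{RHom_R^e<->RHom_R} for $R=A$ intertwines Stefan's action on $\Ext^q_{A^e}(A,\Hom(M,N))$ with the transported one. This follows once one observes that~\eqref{RHom_R^e<->RHom_R} is natural in the pair $(M,N)$, hence commutes with the translation-map operators $x\mapsto\sum\kappa^1(h)\,x\,\kappa^2(h)$ and so transports the degree-zero Miyashita--Ulbrich action~\eqref{H-mod-on-M^A}; as Stefan's action on the universal $\delta$-functor $\Ext^\bullet_{A^e}(A,-)$ is, by construction, the unique prolongation of that degree-zero action, naturality forces the two $H$-module structures on $\Ext^q_{A^{op}}(M,N)$ to coincide. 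Concretely, if $E^\bullet$ is a $B^e$-injective resolution of $\Hom_\kk(M,N)$ (still $A^e$-injective, since $_{A^e}B^e$ is flat), then on $\Hom_{A^e}(A,E^\bullet)=(E^\bullet)^A$ the action~\eqref{H-mod-on-M^A} sends $f$ to the manifestly right $A$-linear map $m\mapsto\sum f(m\,\kappa^1(h))\,\kappa^2(h)$, and comparing $E^\bullet$ with a projective resolution of $A$ over $A^e$ shows this computes the desired action on $\Ext^q_{A^{op}}(M,N)$. This bookkeeping is the only nontrivial point; everything else is immediate.
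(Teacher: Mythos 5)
Your proposal is correct and follows essentially the same route as the paper: apply Stefan's spectral sequence (Theorem \ref{H-Galois-spectral-sequence}) to the $B^e$-module $\Hom(M,N)$ and rewrite both the $E_2$-page and the abutment via \eqref{RHom_R^e<->RHom_R}. The paper simply defines the $H$-action on $\Ext^q_{A^{op}}(M,N)$ by transport along that isomorphism, so your extra compatibility check, while harmless, is not needed.
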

\begin{proof}
	In light of the isomorphism \eqref{RHom_R^e<->RHom_R}, for every integer $i \geq 0$, we have
	\[ \Ext^i_{A^e}(A, \Hom(M, N)) \cong \Ext^i_{A^{op}}(M, N).\]
	This implies that $\Ext^i_{A^{op}}(M, N)$ inherits a right $H$-module structure through this isomorphism.
	By Theorem \ref{H-Galois-spectral-sequence}, there exists a convergent spectral sequence which unfolds as follows:
	\begin{center}
		$\Ext^{p}_{H^{op}}(\kk, \Ext^q_{A^{op}}(M, N)) \Longrightarrow \Ext^{p+q}_{B^e}(B, \Hom(M, N)) \cong \Ext^{p+q}_{B^{op}}(M, N)$.
	\end{center}
	So we get the conclusion.
\end{proof}

The following result is an immediate consequence of Lemma \ref{H-Galois-spectral-sequenceII}.

\begin{prop}\label{Hopf-Galois-ext-prop-0}
	If $A \subseteq B$ is a flat $H$-Galois extension, then $\rgld(B) \leq \gld(H) + \rgld(A)$.
\end{prop}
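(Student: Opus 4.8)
The plan is to deduce this immediately from Lemma \ref{H-Galois-spectral-sequenceII}, exactly as the paragraph preceding the statement advertises. First I would recall that $\rgld(B) = \sup\{\, i : \Ext^i_{B^{op}}(M,N) \neq 0 \text{ for some right } B\text{-modules } M, N \,\}$, so it suffices to show $\Ext^{n}_{B^{op}}(M,N) = 0$ for all pairs of right $B$-modules $M, N$ whenever $n > \gld(H) + \rgld(A)$. Fix such an $n$ and such $M, N$.

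Next I would invoke the convergent spectral sequence of Lemma \ref{H-Galois-spectral-sequenceII},
\[ E_2^{p,q} = \Ext^{p}_{H^{op}}(\kk, \Ext^q_{A^{op}}(M, N)) \Longrightarrow \Ext^{p+q}_{B^{op}}(M, N), \]
and examine the $E_2$-page. For $q > \rgld(A)$ the group $\Ext^q_{A^{op}}(M,N)$ vanishes, hence $E_2^{p,q} = 0$ for all such $q$. For $p > \rgld(H)$ we have $\Ext^p_{H^{op}}(\kk, -) = 0$ since $\rgld(H) = \pd(\kk_H)$ (as recorded in the excerpt, using \cite[Corollary 1.4]{BG1997}), so $E_2^{p,q} = 0$ for all such $p$ regardless of the coefficient module. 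Therefore $E_2^{p,q} = 0$ unless $0 \le p \le \rgld(H)$ and $0 \le q \le \rgld(A)$, which forces $E_2^{p,q} = 0$ whenever $p + q > \rgld(H) + \rgld(A) = \gld(H) + \rgld(A)$ (using that $\gld(H) = \rgld(H)$ for the Hopf algebra $H$, whose antipode is assumed bijective, so left and right global dimensions agree). Since the spectral sequence is concentrated in the first quadrant, every subsequent page $E_r^{p,q}$ is a subquotient of $E_2^{p,q}$, hence also vanishes in total degree $> \gld(H) + \rgld(A)$, and the abutment $\Ext^{p+q}_{B^{op}}(M,N)$ is assembled from a finite filtration whose graded pieces are the $E_\infty^{p,q}$. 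Consequently $\Ext^{n}_{B^{op}}(M,N) = 0$ for $n > \gld(H) + \rgld(A)$, which gives $\rgld(B) \le \gld(H) + \rgld(A)$.

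There is essentially no obstacle here: the argument is a standard first-quadrant spectral sequence degree count, and every ingredient — convergence, the identification $\rgld(H) = \pd(\kk_H)$, and the first-quadrant support of $E_2$ — is already available. The only minor point worth stating carefully is the passage from $\rgld(H)$ to $\gld(H)$, which is legitimate because the standing assumption that the antipode of $H$ is bijective makes $S \colon H \to H^{\op}$ an algebra isomorphism, so $\lgld(H) = \rgld(H) = \gld(H)$.
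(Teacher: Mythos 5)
Your proof is correct and is exactly the argument the paper has in mind: the paper simply declares the proposition an immediate consequence of Lemma \ref{H-Galois-spectral-sequenceII}, and your first-quadrant degree count on that spectral sequence, together with the identifications $\pd(\kk_H)=\rgld(H)=\gld(H)$, is the intended justification.
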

The following example illustrates that within the above proposition, the equality may not be attained.


\begin{ex}
	For a finite-dimensional Hopf algebra $H$ and its dual $H^*$, the Heisenberg double $B:= H \# H^*$ is isomorphic to $\End(H)$, a separable algebra that forms an $H^*$-Galois extension over its subalgebra $A = H$. When $H$ is not cosemisimple, meaning $H^*$ is non-semisimple, the following strict inequalities hold:
	$\pd(_{B^e}B) < \gld(H^*) + \pd(A_{A^e})$ and $\rgld(B) < \gld(H^*) + \rgld(A)$.
\end{ex}

\section{Faithfull flat PI Hopf Galois extensions are always AS Gorenstein}

\subsection{Artin-Schelter Gorenstein Hopf algebras}

Recall that an algebra is called Gorenstein when it possesses finite injective dimensions over itself on both the left and right sides.
We now turn our attention to a stronger variant of this Gorenstein property for noncommutative rings.

\begin{defn}\cite[Definition 1.3]{RWZ2021}\label{AS Gorenstein-defn}
	Let $R$ be a noetherian algebra.
	We say $R$ is {\it left AS Gorenstein} (where AS stands for Artin and Schelter) if
	
	(1) $_RR$ has finite injective dimension, say $d$, and
	
	(2) for every finite dimensional left $R$-module $V$, $\Ext^i_R(V, R) = 0$ for all $i \neq d$ and $\Ext^d_R(V, R)$ is also finite dimensional.
	
	Right AS Gorensteinness is analogously defined.
	We say $R$ is {\it AS Gorenstein} if it is both left and right AS Gorenstein.
	If further, $\gld R = d < + \infty$, then $R$ is called {\it AS regular}.
\end{defn}
If $R$ is an AS Gorenstein algebra of dimension $d$, then $\Ext^d_R(V, R) \neq 0$ for any finite dimensional $R$-module $V$. Refer to \cite[Lemma 1.4]{RWZ2021} for details.


Let us revisit the definition of skew Calabi-Yau algebras as presented in \cite[Definition 0.1]{RRZ2014}, a class intimately related to AS regular algebras.
An algebra $R$ is called homologically smooth when it admits a bounded resolution consisting entirely of finitely generated projective modules over its enveloping algebra $R^e$.
According to \cite[Lemma 3.3]{LMeu2019}, for instance, a Hopf algebra $H$ is said to be homologically smooth if and only if it is of type $\mathbf{FP}$.

An homological smooth algebra $R$ is called {\it skew Calabi-Yau} of dimension $d$ for some integer $d \geq 0$, if $R$ has an automorphism $\mu$ of $R$ such that
\[ \Ext^i_{R^e}(R, R^e) \cong \begin{cases}
	0, & i \neq d\\
	R_{\mu}, & i = d
\end{cases}\]
as $R$-$R$-bimodules, where $R_{\mu}$ signifies the $R$-$R$-bimodule twisted by $\mu$.
This bimodule is identical to $R$ as a $\kk$-vector space, with the action defined by $r \cdot x \cdot r' = rx\mu(r')$ for all $r, r' \in R$ and $x \in R_{\mu}$. 

When an algebra $R$ is AS Gorenstein of dimension $d$, for each finite-dimensional simple left module $V$, the Ext group $\Ext^d_R(V, R)$ is also a simple right $R$-module, and a similar property holds on the right side, as demonstrated in \cite[Lemma 1.4]{RWZ2021}. This observation substantiates that Definition \ref{AS Gorenstein-defn} is indeed equivalent to the definition of AS Gorenstein provided in \cite[Definition 3.1]{WZ2003} for any algebra $R$ whose simple modules are all finite-dimensional.


In the following, we show that Definition \ref{AS Gorenstein-defn} is equivalent to the definition of AS Gorenstein in \cite[Definition 1.2]{BZ2008} for any noetherian Hopf algebra with a bijective antipode.

\begin{defn}\label{AS Gorenstein-Hopf-defn}
	Let $H$ be a Hopf algebra of type $\mathbf{FP}_{\infty}$ with a bijective antipode. 
	We say $H$ is {\it AS Gorenstein} if
	
	(1) $_HH$ has finite injective dimension, say $d$, and
	
	(2) $\Ext^i_H(\kk, H) = \begin{cases}
		0, & i \neq d \\
		\kk, & i = d
	\end{cases}$.
	
	If further, $\gld H = d$, then $H$ is called {\it AS regular}.
\end{defn}

Recall the standard actions by which tensor products and Hom-spaces of $H$-modules $U$ and $V$ become $H$-modules
\[ h \rhu (u \otimes v) = \sum_{(h)} (h_1 \rhu u) \otimes (h_2 \rhu v), \quad (h \rhu f)(u) = \sum_{(h)} h_1 \rhu f(Sh_2 \rhu u) \]
for all $h \in H$, $u \in U$, $v \in V$ and $f \in \Hom(U, V)$.
In case the $H$-action on $V$ is trivial, the second formula simplifies. In particular, for any $h \in H$, $u \in U$ and $f \in V^* = \Hom(V, \kk)$,
\[ (h \rhu f)(u) = \sum_{(h)} h_1 \rhu f(Sh_2 \rhu u) = \sum_{(h)} \vep(h_1) f(Sh_2 \rhu u) = f(Sh \rhu u). \]

Then the natural adjoint isomorphism
\[ \Hom(U \otimes V, W) \To \Hom(U, \Hom(V, W)), \]
restricts to an isomorphism
\[ \Hom_H(U \otimes V, W) \To \Hom_H(U, \Hom(V, W)), \]
see \cite{BG1997}.
Consequently, if $_HU$ is projective, then $U \otimes V$ is also a projective $H$-module by this adjoint isomorphism.

It is a standard fact that for any left $H$-modules $V$ and $W$, the natural map
\[ \varphi_{W, V} : W \otimes V^* \To \Hom(V, W), \qquad w \otimes f \mapsto f(v)w, \]
is an $H$-module map.
Further, $\varphi_{W, V}$ is bijective when $V$ is finite dimensional.
There is a morphism of right $H$-modules
\[ \psi_{U, W}: \Hom_{H}(U, H) \otimes W \To \Hom_{H}(U, H_H \otimes W), \qquad f \otimes w \mapsto \big( u \mapsto \sum_{(f(u))} f(u)_0 \otimes f(u)_1 \rhu w \big) \]
where the right $H$-module structure on $\Hom_{H}(U, H) \otimes W$ is given by 
\[ (f \otimes w) \lhu h = \sum_{(h)} (f \lhu h_1) \otimes (Sh_2 \rhu w)\]
for all $f \in \Hom_H(U, H)$, $w \in W$ and $h \in H$.
Further, $\psi_{U, W}$ is bijective when $U$ is finitely presented as an $H$-module.

The following lemma is well-known for instance \cite[Lemma 1.11 and its proof]{BG1997}.

\begin{lem}\label{AS Gorenstein-lem_0}
	Let $V$ be finite dimensional $H$-module, and $U$ be an $H$-module of type $\mathbf{FP_{\infty}}$.
	Then for all $i \geq 0$,
	\[ \Ext^i_H(U \otimes V, H) \cong \Ext^i_H(U, H) \otimes V^* \]
	as right $H$-modules, the right $H$-module structure on $\Ext^i_H(U, H) \otimes V^*$ is given by
	\[ (x \otimes f) \lhu h = \sum_{(h)} (x \lhu h_1) \otimes (Sh_2 \rhu f)\]
	for all $x \in \Ext^i_H(U, H)$, $f \in V^*$ and $h \in H$.
\end{lem}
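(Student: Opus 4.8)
The plan is to reduce the statement to the two building blocks assembled just before the lemma: the bijective $H$-module maps $\varphi_{W,V}\colon W\otimes V^*\to\Hom(V,W)$ (valid since $V$ is finite dimensional) and $\psi_{U,W}\colon\Hom_H(U,H)\otimes W\to\Hom_H(U,H_H\otimes W)$ (valid since $U$ is of type $\mathbf{FP}_\infty$, hence finitely presented). Concretely, I would first fix a projective resolution $P_\bullet\to U$ with each $P_n$ finitely generated; this exists because $U$ is of type $\mathbf{FP}_\infty$. Since $V$ is finite dimensional and, for projective ${}_HU'$, the module $U'\otimes V$ is again projective (noted in the excerpt via the adjoint isomorphism), the complex $P_\bullet\otimes V\to U\otimes V$ is a projective resolution of $U\otimes V$ by finitely generated projectives. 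Then $\Ext^i_H(U\otimes V,H)$ is the $i$-th cohomology of $\Hom_H(P_\bullet\otimes V,H)$.

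The second step is to identify the complex $\Hom_H(P_\bullet\otimes V,H)$ with $\Hom_H(P_\bullet,H)\otimes V^*$ as complexes of right $H$-modules. For each $n$, using the restricted adjunction $\Hom_H(P_n\otimes V,H)\cong\Hom_H(P_n,\Hom(V,H))$ followed by $\varphi_{H,V}^{-1}$, which gives $\Hom(V,H)\cong H\otimes V^*$ as left $H$-modules, and then $\psi_{P_n,V^*}^{-1}$ (applicable since $P_n$ is finitely generated projective, so certainly finitely presented), I get
\[
\Hom_H(P_n\otimes V,H)\;\cong\;\Hom_H(P_n,\Hom(V,H))\;\cong\;\Hom_H(P_n,H\otimes V^*)\;\cong\;\Hom_H(P_n,H)\otimes V^*,
\]
where in the middle step one uses that on the trivial $H$-module factor the Hom-action collapses, so $\Hom(V,H)\cong H_H\otimes V^*$ with exactly the coaction appearing in the definition of $\psi$. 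I would check that these isomorphisms are natural in $P_n$, so that they assemble into an isomorphism of cochain complexes; naturality in each factor is built into the fact that $\varphi$ and $\psi$ are morphisms of functors (resp. natural transformations), so the differentials correspond. Taking cohomology and using that $(-)\otimes V^*$ is exact ($V^*$ being finite dimensional, hence $\kk$-flat) yields $\Ext^i_H(U\otimes V,H)\cong\Ext^i_H(U,H)\otimes V^*$.

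The remaining step, and the one deserving the most care, is the bookkeeping of the right $H$-module structures: one must verify that under the chain of identifications above, the right $H$-action on $\Ext^i_H(U\otimes V,H)$ — coming from the $H$-module structure on $U\otimes V$ via $h\rhu(u\otimes v)=\sum h_1\rhu u\otimes h_2\rhu v$ and the standard right action on $\Hom_H(-,H)$ — is carried to the action $(x\otimes f)\lhu h=\sum (x\lhu h_1)\otimes(Sh_2\rhu f)$. This is exactly the twisted action already recorded for $\psi_{U,W}$ in the excerpt (with $W=V^*$), so the work is to match that formula against the diagonal action on $U\otimes V$; the antipode enters through $\varphi_{H,V}$ and the contragredient action $(h\rhu f)(v)=f(Sh\rhu v)$ computed earlier. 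I would do this computation at the level of the representing maps on a single $P_n$ and then note it passes to cohomology. The main obstacle is purely this compatibility of actions — the underlying vector-space isomorphism is immediate — and since the paper cites \cite[Lemma 1.11]{BG1997} for the same statement, I would either reproduce that argument or, if the present setting (arbitrary $\mathbf{FP}_\infty$ module $U$ rather than $U=\kk$) requires it, spell out the naturality that lets $P_n$ replace $\kk$ throughout.
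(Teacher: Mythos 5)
Your proposal is correct and follows exactly the route the paper intends: the paper gives no written proof (it cites \cite[Lemma 1.11 and its proof]{BG1997}), but the adjunction $\Hom_H(U\otimes V,W)\cong\Hom_H(U,\Hom(V,W))$ and the maps $\varphi_{W,V}$ and $\psi_{U,W}$ are introduced immediately beforehand precisely so that one applies them termwise to a resolution of $U$ by finitely generated projectives and passes to cohomology, as you do. Your identification of the action-bookkeeping as the only delicate point is also where the real content lies.
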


By using Ischebeck's spectral sequence, Brown and Zhang proved that the condition (2) of Definition \ref{AS Gorenstein-defn} can be replaced by a weaker form when $R = H$ is a Hopf algebra with a bijective antipode.


\begin{lem}\cite[Lemma 3.2]{BZ2008}\label{AS Gorenstein-lem}
	Let $H$ be a Hopf algebra of type $\mathbf{FP}_{\infty}$ with a bijective antipode. 
	If $H$ has injective dimension $d$, then the following are equivalent.
	
	(1) For each finite dimensional $H$-module $V$, $\Ext^i_H(V, H) = 0$ for all $i \neq d$ and $\Ext^d_H(V, H)$ is finite dimensional.
	
	(2) $\Ext^i_H(\kk, H) = 0$ for all $i \neq d$ and $\Ext^d_H(\kk, H)$ is finite dimensional.
	
	(3) $\Ext^i_H(\kk, H) = \begin{cases}
		0, & i \neq d \\
		\kk, & i = d
	\end{cases}$.
\end{lem}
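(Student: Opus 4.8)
The plan is to establish the two equivalences $(1)\Leftrightarrow(2)$ and $(2)\Leftrightarrow(3)$, from which the statement follows. Two of the four implications are immediate: $(1)\Rightarrow(2)$ because the trivial module $\kk$ is finite dimensional, and $(3)\Rightarrow(2)$ because a fortiori it is one dimensional. (Throughout I read the symbol ``$\kk$'' in (3) as the one-dimensional vector space, so that (3) is the assertion $\dim_\kk\Ext^d_H(\kk,H)=1$ together with the vanishing for $i\ne d$; thus the content of $(2)\Rightarrow(3)$ is the upgrade from ``finite dimensional'' to ``one dimensional''.) So the real work lies in $(2)\Rightarrow(1)$ and $(2)\Rightarrow(3)$.

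For $(2)\Rightarrow(1)$, the key point is that for any finite-dimensional left $H$-module $V$ there is an isomorphism of left $H$-modules $\kk\otimes V\cong V$, since the trivial tensor factor acts invisibly on the diagonal action. Because $H$ is of type $\mathbf{FP}_\infty$ — i.e. $\kk$ is of type $\mathbf{FP}_\infty$ as an $H$-module — Lemma \ref{AS Gorenstein-lem_0} applied with $U=\kk$ gives, for every $i\ge 0$,
\[ \Ext^i_H(V,H)\;\cong\;\Ext^i_H(\kk\otimes V,H)\;\cong\;\Ext^i_H(\kk,H)\otimes V^{*}. \]
Now (2) forces the right-hand side to vanish for $i\ne d$ and, for $i=d$, to be a tensor product of two finite-dimensional spaces, hence finite dimensional; this is exactly (1).

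The substantive implication is $(2)\Rightarrow(3)$. Put $\omega:=\Ext^d_H(\kk,H)$, a finite-dimensional right $H$-module by (2); the vanishing in (2) says the complex $\RHom_H(\kk,H)$ is quasi-isomorphic to $\omega$ placed in cohomological degree $d$. I then invoke Ischebeck's spectral sequence, available since $\kk$ is of type $\mathbf{FP}_\infty$ and $\injdim({}_HH)=\injdim(H_H)=d<\infty$ (the two coincide as the antipode is bijective): the convergent spectral sequence
\[ E_2^{p,q}=\Ext^p_{H^{op}}\!\bigl(\Ext^{-q}_H(\kk,H),\,H\bigr)\;\Longrightarrow\;\begin{cases}\kk,&p+q=0,\\ 0,&p+q\ne 0.\end{cases} \]
By (2) it is concentrated in the single row $q=-d$, hence degenerates, yielding
\[ \Ext^i_{H^{op}}(\omega,H)\;\cong\;\begin{cases}\kk,&i=d,\\ 0,&i\ne d;\end{cases} \]
in particular $\omega\ne 0$, so $\dim_\kk\omega\ge 1$. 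Finally $H^{op}$ is again a Hopf algebra with bijective antipode, and $\kk$ is of type $\mathbf{FP}_\infty$ over it — transport a finitely generated projective resolution of ${}_H\kk$ along the algebra isomorphism $S\colon H\to H^{op}$ — so the right-module version of Lemma \ref{AS Gorenstein-lem_0}, applied to the finite-dimensional right $H$-module $\omega$, gives
\[ \kk\;\cong\;\Ext^d_{H^{op}}(\omega,H)\;\cong\;\Ext^d_{H^{op}}(\kk,H)\otimes\omega^{*}. \]
Comparing dimensions, $1=\bigl(\dim_\kk\Ext^d_{H^{op}}(\kk,H)\bigr)\cdot\dim_\kk\omega$ with both factors positive integers, whence $\dim_\kk\omega=1$; together with the vanishing in (2) this is (3).

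I expect the main obstacle to be the careful deployment of Ischebeck's spectral sequence outside the noetherian setting: one must verify that the finiteness making it converge is supplied by $\kk$ being of type $\mathbf{FP}_\infty$ rather than by $H$ being noetherian, and that the outer functor $\Ext_{H^{op}}(-,H)$ is computed from the finitely generated projective resolution of $\kk$, so that the biduality $P^{**}\cong P$ for finitely generated projectives is precisely what identifies the abutment with $\kk$. A secondary, purely bookkeeping point is the transfer of Lemma \ref{AS Gorenstein-lem_0} to right modules along the bijective antipode, which is routine but should be spelled out.
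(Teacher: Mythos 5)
Your proposal is correct and follows essentially the same route as the paper: the easy implications are dispatched the same way, the upgrade from ``finite dimensional'' to ``one dimensional'' uses Ischebeck's spectral sequence collapsed by the hypothesis together with Lemma \ref{AS Gorenstein-lem_0} applied to the finite-dimensional module $\Ext^d(\kk,H)$, and the passage to arbitrary finite-dimensional $V$ is the same tensor-twist argument via Lemma \ref{AS Gorenstein-lem_0}. The only differences are bookkeeping: you run the biduality on the mirror side (outer $\Ext_{H^{op}}$ instead of outer $\Ext_H$) and prove $(2)\Rightarrow(1)$ directly rather than $(3)\Rightarrow(1)$, neither of which changes the substance.
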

\begin{proof}
	(1) $\Rightarrow$ (2)
	It is trivial.
	
	(2) $\Rightarrow$ (3)
	Given that $H$ is a Hopf algebra of type $\mathbf{FP}_{\infty}$ with finite injective dimension, Ischebeck's spectral sequence yields
	\[ \Ext_{H}^{p}(\Ext_{H^{op}}^{-q}(\kk, H), H) \Rightarrow \Tor_{-p-q}^H(\kk, H). \]
	From Lemma \ref{AS Gorenstein-lem_0}, we deduce that $\Ext_{H}^{d}(\kk, H) \otimes \Ext_{H^{op}}^{d}(\kk, H)^* \cong \Ext_{H}^{d}(\Ext_{H^{op}}^{d}(\kk, H), H) \cong \kk$.
	Thus, $\dim (\Ext_{H}^{d}(\kk, H)) = 1$, implying $\Ext_{H}^{d}(\kk, H) = \kk$.
	
	(3) $\Rightarrow$ (1)
	By Lemma \ref{AS Gorenstein-lem_0}, for any finite dimensional $H$-module $V$,
	\[\Ext^i_H(V, H) \cong \Ext^i_H(\kk, H) \otimes V^* \cong \begin{cases}
		0, & i \neq d \\
		V^*, & i = d.
	\end{cases}\]
	as right $H$-modules.
	This establishes the result.
\end{proof}

\subsection{Hopf module structure on $\Hom_{A^{op}}(M, X)$}

Let $H^*$ be the linear dual of $H$.
A left module $V$ over $H^*$ is said to be rational if for any $v \in V$, the submodule generated by the action of $H^*$ on $v$, denoted as $H^* \cdot v$, possesses finite dimension.

For any left $H^*$-module $V$, it is rational if and only if the $H^*$-action can be derived from a compatible right $H$-coaction.
In other words, that is, $V$ must be an $H$-comodule such that
$$\sum_{(v)} v_0 \langle f, v_1 \rangle = f \rhu v$$
for any $v \in V$ and $f \in H^*$.

Let $M$ be a vector space and $N$ be a right $H$-comodule.
Then $\Hom(M, N)$ is a left $H^*$-module, under the action
$$(h^* \rhu f) (m) = h^* \rhu (f(m))$$
for all $h^* \in H^*$, $f \in \Hom(M, N)$ and $m \in M$.

Then we have the following immediate consequence.

\begin{lem}\label{lem-0}
	Let $A = B^{co H} \subseteq B$ be an $H$-Galois extension, and $M$ be a right $A$-module and $X \in {\M^H_{A}}$.
	
	(1)
	If $M_A$ is finitely generated, then $\Hom_{A^{op}}(M, X)$ is an $H$-comodule.
	
	(2)
	If $M_A$ is finitely presented, then $\Hom_{A^{op}}(M, X)^{co H} = \Hom_{A^{op}}(M, X^{co H})$.
\end{lem}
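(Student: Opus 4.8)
The plan is to exploit the fact that the functor $-\otimes_A B : \M_A \to \M^H_B$ is an equivalence of categories (Theorem \ref{faithful-flat-Hopf-Galois-extension-thm}), together with the standard description of $H$-comodule structures as rational $H^*$-modules. First I would set up the $H^*$-action: for $M$ a right $A$-module and $X \in \M^H_A$, the space $\Hom(M,X)$ carries the left $H^*$-action $(h^* \rhu f)(m) = h^* \rhu (f(m))$ described just above, and one checks (using that $X$ is an $(A,H)$-Hopf module, so the right $A$-action on $X$ is $H$-colinear) that $\Hom_{A^{op}}(M,X)$ is an $H^*$-submodule of $\Hom(M,X)$. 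So part (1) reduces to showing that $\Hom_{A^{op}}(M,X)$ is a \emph{rational} $H^*$-module when $M_A$ is finitely generated. For this I would choose a finite generating set $m_1,\dots,m_n$ of $M$ as an $A$-module; then the evaluation map $f \mapsto (f(m_1),\dots,f(m_n))$ embeds $\Hom_{A^{op}}(M,X)$ $H^*$-equivariantly into $X^{\oplus n}$, and since $X$ is rational (being a relative Hopf module, hence an $H$-comodule, hence rational as $H^*$-module, with the rational submodule generated by any element finite-dimensional — this uses that an $H$-comodule is a union of finite-dimensional subcomodules), every cyclic $H^*$-submodule of $X^{\oplus n}$, and therefore of $\Hom_{A^{op}}(M,X)$, is finite-dimensional. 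Hence $\Hom_{A^{op}}(M,X)$ is rational and thus an $H$-comodule.

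For part (2), once the $H$-comodule structure is in place, I would identify the coinvariants. Since coinvariants under a right $H$-coaction are the same as the $H^*$-invariants in the rational module (elements on which $h^*$ acts by $\langle h^*,1\rangle = \vep(h^*)$), an element $f \in \Hom_{A^{op}}(M,X)$ lies in $\Hom_{A^{op}}(M,X)^{co H}$ iff $h^* \rhu f = \vep(h^*) f$ for all $h^*$, iff $h^* \rhu (f(m)) = \vep(h^*) f(m)$ for all $m \in M$ and all $h^*$ — provided the $H^*$-action on the image is suitably controlled — iff $f(m) \in X^{co H}$ for all $m$. The inclusion $\Hom_{A^{op}}(M,X^{coH}) \subseteq \Hom_{A^{op}}(M,X)^{coH}$ is then immediate from this pointwise criterion (note $X^{co H}$ is a right $A$-submodule of $X$ since $A = B^{co H}$ coacts trivially, more precisely since the $A$-action on $X$ is $H$-colinear and $A$ is coinvariant). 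The reverse inclusion is where finite presentation of $M_A$ enters: I would take a finite presentation $A^{(m)} \to A^{(n)} \to M \to 0$ of $M$ as a right $A$-module, apply the left-exact functor $\Hom_{A^{op}}(-,X)$ to get an exact sequence $0 \to \Hom_{A^{op}}(M,X) \to X^{\oplus n} \to X^{\oplus m}$, observe that all maps here are $H^*$-linear (rational), and that taking coinvariants is left exact and commutes with finite direct sums, so that $\Hom_{A^{op}}(M,X)^{co H}$ is the kernel of $(X^{coH})^{\oplus n} \to (X^{coH})^{\oplus m}$, which is exactly $\Hom_{A^{op}}(M, X^{co H})$.

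The main obstacle I anticipate is verifying that the maps in the finite presentation complex, after applying $\Hom_{A^{op}}(-,X)$, are genuinely morphisms of $H$-comodules (equivalently, of rational $H^*$-modules) — i.e.\ that the $H$-comodule structure constructed in part (1) is natural in $M$ — and in particular that the description of the comodule structure does not depend on the chosen generating set. This requires a careful bookkeeping argument: given two presentations, the induced comodule structures agree because the embeddings into powers of $X$ are compatible via the rational-submodule characterization, which is intrinsic. Alternatively, and perhaps more cleanly, one can phrase everything through the equivalence $\M_A \simeq \M^H_B$: under $-\otimes_A B$, the module $M$ corresponds to the Hopf module $M \otimes_A B$, and one relates $\Hom_{A^{op}}(M,X)$ to a Hom-space of Hopf modules whose comodule structure is manifest; but checking that this reproduces the same structure is itself the bookkeeping in disguise. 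I would present the direct argument, isolating the naturality check as a short lemma, and flag that finiteness hypotheses are used precisely to guarantee (i) rationality in part (1) and (ii) the exactness/commutation of coinvariants with the finite presentation in part (2).
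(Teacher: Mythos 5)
Your proposal is correct and follows essentially the same route as the paper: part (1) via realizing $\Hom_{A^{op}}(M,X)$ as a rational $H^*$-submodule of a finite direct sum of copies of $X$, and part (2) via a finite presentation, left exactness of $\Hom_{A^{op}}(-,X)$ and of $(-)^{co H}$, and the bijectivity of the comparison map on finitely generated projectives. (Your worry about naturality is a non-issue since the coaction is intrinsically determined by the $H^*$-action; and in fact your pointwise criterion, once one notes that invariants equal coinvariants in any rational $H^*$-module, already yields (2) directly.)
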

\begin{proof}
	(1)
	It is straightforward to verify that $\Hom_{A^{op}}(M, X)$ constitutes an $H^*$-submodule of $\Hom(M, X)$.
	Furthermore, when $M$ is finitely generated as an $A$-module, $\Hom_{A^{op}}(M, X)$ can be viewed as a subset of a direct sum of multiple copies of $X$. Given that $X$ is a rational $H^*$-module, it follows that $\Hom_{A^{op}}(M, X)$ inherits its rationality and thus becomes an $H$-comodule itself.
	
	Consequently, the $H$-coaction on $\Hom_{A^{op}}(M, X)$ is characterized by the following equation:
	\begin{equation}\label{H-comod-str-on-Hom-set}
		\sum_{(f)} f_0(m) \otimes f_1 = \sum_{(f(m))} f(m)_0 \otimes f(m)_1
	\end{equation}
	for all $f \in \Hom_{A^{op}}(M, X)$ and $m \in M$.
	
	(2)
	For any finitely generated $A$-module $N$, we have a canonical morphism
	\[ \alpha_N:\Hom_{A^{op}}(N, X^{co H}) \to \Hom_{A^{op}}(N, X)^{co H} \]
	which is functorial in both the variables $N$ and $X$.
	If further, $N$ is projective, then $\alpha_N$ is bijective.
	Since $M_A$ is finitely presented, there is a short exact sequence
	\[ P_1 \To P_0 \To M \To 0, \]
	where $P_0$ and $P_1$ are finitely generated projective $A$-module.
	Thus, we have a commutative diagram
	$$ \xymatrix{ 0 \ar[r] & \Hom_{A^{op}}(M, X^{co H}) \ar[r] \ar[d]^{\alpha_M} & \Hom_{A^{op}}(P_0, X^{co H}) \ar[r] \ar[d]^{\alpha_{P_0}} & \Hom_{A^{op}}(P_1, X^{co H}) \ar[d]^{\alpha_{P_1}} \\ 0 \ar[r] & \Hom_{A^{op}}(M, X)^{co H} \ar[r] & \Hom_{A^{op}}(P_0, X)^{co H} \ar[r] & \Hom_{A^{op}}(P_1, X)^{co H}}$$
	in which every row is exact.
	The map $\alpha_M$ is also bijective because $\alpha_{P_0}$ and $\alpha_{P_1}$ are both bijective.
\end{proof}

\begin{lem}\label{B-otimes-H-mod-str}
	Given a $B$-module $M$, finitely generated over $A$, and any Hopf module $X \in {\M^H_{B}}$, the Hom space $\Hom_{A^{op}}(M, X)$ naturally admits a Hopf module structure in $\M^H_{H}$.
\end{lem}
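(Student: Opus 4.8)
The plan is to put two structures on $W := \Hom_{A^{op}}(M,X)$ and check they are compatible: a right $H$-coaction supplied by Lemma \ref{lem-0}(1), and a right $H$-action built from the translation map $\kappa$ in the spirit of the Miyashita--Ulbrich action \eqref{H-mod-on-M^A}. Restricting the right $B$-action on $M$ along $A \subseteq B$ makes $M$ a finitely generated right $A$-module, and restricting the one on $X$ makes $X$ an object of $\M^H_A$ (the $H$-coaction on $A = B^{co H}$ being trivial), so Lemma \ref{lem-0}(1) endows $W$ with an $H$-comodule structure $\rho_W(f) = \sum f_0 \otimes f_1$ determined by $\sum f_0(m)\otimes f_1 = \sum f(m)_0 \otimes f(m)_1$ as in \eqref{H-comod-str-on-Hom-set}.

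For the action, set, for $f \in W$, $h \in H$ and $m \in M$,
\[ (f \lhu h)(m) := \sum f\big(m\,\kappa^1(h)\big)\,\kappa^2(h), \]
using the right $B$-action on $M$ to form $m\,\kappa^1(h)$ and the right $B$-action on $X$ to multiply by $\kappa^2(h)$ on the right. Since $f$ is right $A$-linear, the rule $b_1 \otimes_A b_2 \mapsto f(mb_1)b_2$ is a well-defined map $B \otimes_A B \to X$, so $f\lhu h$ is well-defined; applying that map to \eqref{akappa(h)=kappa(h)a} shows $f\lhu h$ is again right $A$-linear, i.e. $f\lhu h \in W$; and $\kappa(1) = 1 \otimes_A 1$ (immediate from the definition of $\kappa$) together with \eqref{kappa(hk)} gives $f \lhu 1 = f$ and $f \lhu (hk) = (f\lhu h)\lhu k$, so $\lhu$ is a right $H$-action. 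All of this is routine.

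The real content is the Hopf-module compatibility in $\M^H_H$, where $H$ coacts on itself by $\Delta$: one must show $\rho_W(f\lhu h) = \sum_{(f),(h)} (f_0\lhu h_1)\otimes f_1 h_2$. By the defining property of $\rho_W$ this amounts to proving, for every $m\in M$, that $\rho_X\big((f\lhu h)(m)\big) = \sum_{(f),(h)} (f_0\lhu h_1)(m)\otimes f_1 h_2$. For the left side, $X \in \M^H_B$ yields
\[ \rho_X\big((f\lhu h)(m)\big) = \sum \rho_X\big(f(m\kappa^1(h))\big)\,\rho_B\big(\kappa^2(h)\big) = \sum f(m\kappa^1(h))_0\,\kappa^2(h)_0 \otimes f(m\kappa^1(h))_1\,\kappa^2(h)_1, \]
which one recognises as the image of the left-hand side $\sum_{(h)}\kappa^1(h)\otimes_A\kappa^2(h)_0\otimes\kappa^2(h)_1$ of \eqref{kappa^1-kappa^2_0-kappa^2_1} under the map $\Phi\colon B\otimes_A B\otimes H \to X\otimes H$, $b_1\otimes_A b_2\otimes g \mapsto \sum f(mb_1)_0\,b_2\otimes f(mb_1)_1\,g$ (writing $\rho_X(f(mb_1)) = \sum f(mb_1)_0\otimes f(mb_1)_1$), which descends to $\otimes_A$ precisely because $f$ is right $A$-linear and $A$ coacts trivially. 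Applying the same $\Phi$ to the right-hand side $\sum_{(h)}\kappa^1(h_1)\otimes_A\kappa^2(h_1)\otimes h_2$ of \eqref{kappa^1-kappa^2_0-kappa^2_1} gives $\sum_{(h)} f(m\kappa^1(h_1))_0\,\kappa^2(h_1)\otimes f(m\kappa^1(h_1))_1\,h_2$; substituting $f(m\kappa^1(h_1))_0\otimes f(m\kappa^1(h_1))_1 = \sum_{(f)} f_0(m\kappa^1(h_1))\otimes f_1$ from \eqref{H-comod-str-on-Hom-set} and recognising $\sum f_0(m\kappa^1(h_1))\,\kappa^2(h_1) = (f_0\lhu h_1)(m)$ converts it to the desired right-hand side, so $\rho_W$ and $\lhu$ make $\Hom_{A^{op}}(M,X)$ an object of $\M^H_H$. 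The one delicate point, and the step I expect to be the main obstacle, is this last computation: it hinges on the well-definedness of $\Phi$ over $\otimes_A$ (and the analogous descent of the rule defining $\lhu$) together with the care needed to keep the two Sweedler indices apart — the $H$-coaction on values of $f$ in $X$ versus the $H$-coaction on $f$ itself — whereas everything else reduces to the translation-map identities \eqref{kappa(hk)}, \eqref{akappa(h)=kappa(h)a} and \eqref{kappa^1-kappa^2_0-kappa^2_1} already recorded.
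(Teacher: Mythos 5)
Your proposal is correct and follows essentially the same route as the paper: the same Miyashita--Ulbrich action $(f\lhu h)(m)=\sum f(m\kappa^1(h))\kappa^2(h)$, the comodule structure from Lemma \ref{lem-0}(1), and the same compatibility computation driven by \eqref{kappa^1-kappa^2_0-kappa^2_1} and \eqref{H-comod-str-on-Hom-set}. The only difference is that you spell out the routine verifications (well-definedness over $\otimes_A$, the action axioms via \eqref{kappa(hk)} and \eqref{akappa(h)=kappa(h)a}) that the paper delegates to the cited Ulbrich--Miyashita construction \eqref{H-mod-on-M^A}.
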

\begin{proof}
	Notice that the right $H$-module structure on $\Hom_{A^{op}}(M, X)$, equivalent to $\Hom(M, X)^A$, 
	is expressed by \eqref{H-mod-on-M^A} as:
	\begin{equation}\label{H-mod-str-on-Hom-set}
		(f \lhu h) (m) = \sum f(m\kappa^1(h)) \kappa^2(h).
	\end{equation}
	
	For all $f \in \Hom_{A^{op}}(M, X)$, $m \in M$, $h \in H$ and $h^* \in H^*$,
	\begin{align*}
		\sum_{(f \lhu h)} (f \lhu h)_0 (m) \otimes (f \lhu h)_1 \xlongequal{\eqref{H-comod-str-on-Hom-set}} & \rho_X \big( (f \lhu h)(m) \big) \\
		\xlongequal{\eqref{H-mod-str-on-Hom-set}} & \rho_X \big( f(m\kappa^1(h)) \kappa^2(h) \big) \\
		= & \sum_{(f(m\kappa^1(h))), (\kappa^2(h))} f(m\kappa^1(h))_0\kappa^2(h)_0 \otimes f(m\kappa^1(h))_1\kappa^2(h)_1 \\
		\xlongequal[]{\eqref{kappa^1-kappa^2_0-kappa^2_1}} & \sum_{(h), (f(m\kappa^1(h_1)))} f(m\kappa^1(h_1))_0\kappa^2(h_1) \otimes f(m\kappa^1(h_1))_1 h_2 \\
		\xlongequal[]{\eqref{H-comod-str-on-Hom-set}} & \sum_{(h), (f)} f_0(m\kappa^1(h_1)) \kappa^2(h_1) \otimes f_1 h_2 \\
		\xlongequal[]{\eqref{H-mod-str-on-Hom-set}} & \sum_{(f), (h)} (f_0 \lhu h_1) (m) \otimes f_1h_2.
	\end{align*}
	Consequently, $\Hom_{A}(M, X)$ is endowed with a Hopf module structure in $\M^H_H$.
\end{proof}

\subsection{AS Gorenstein property of Hopf Galois extensions}


\begin{lem}\label{Hom(V,-)-acyclic-object}
	Let $R$ be a ring and $V$ be a right $R$-module of type $\mathbf{FP_{\infty}}$.
	If $I_{\lambda}$ is an injective $R$-module for any $\lambda \in \Lambda$, then the direct sum $\bigoplus_{\lambda \in \Lambda} I_{\lambda}$ is acyclic with respect to the functor $\Hom_{R^{op}}(V, -)$, that is, $\Ext^i_{R^{op}}(V, \bigoplus_{\lambda \in \Lambda} I_{\lambda}) = 0$ for all $i > 0$.
\end{lem}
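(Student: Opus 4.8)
The statement is the familiar fact that $\mathbf{FP}_\infty$ modules commute with direct sums in $\Ext$ — more precisely that arbitrary direct sums of injectives are $\Ext_{R^{op}}(V,-)$-acyclic when $V$ is $\mathbf{FP}_\infty$. The plan is to compute $\Ext^i_{R^{op}}(V,-)$ from a projective resolution of $V$ by finitely generated projectives and exploit that $\Hom_{R^{op}}(P,-)$ commutes with direct sums when $P$ is finitely generated projective. Concretely, since $V$ is of type $\mathbf{FP}_\infty$ there is a projective resolution $\cdots \To P_1 \To P_0 \To V \To 0$ with every $P_n$ finitely generated. For a finitely generated projective module $P$, the natural map $\bigoplus_\lambda \Hom_{R^{op}}(P, I_\lambda) \To \Hom_{R^{op}}(P, \bigoplus_\lambda I_\lambda)$ is an isomorphism; this is standard and reduces first to $P$ free of finite rank and then to $P = R$, where both sides are literally $\bigoplus_\lambda I_\lambda$.

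Next I would assemble these isomorphisms over the whole complex. Applying $\Hom_{R^{op}}(-, \bigoplus_\lambda I_\lambda)$ to $P_\bullet$ gives a cochain complex that, by the previous paragraph applied degreewise, is naturally isomorphic to $\bigoplus_\lambda \Hom_{R^{op}}(P_\bullet, I_\lambda)$. Since homology commutes with (arbitrary) direct sums of complexes, we get
\[ \Ext^i_{R^{op}}\Big(V, \bigoplus_{\lambda \in \Lambda} I_\lambda\Big) \cong \bigoplus_{\lambda \in \Lambda} \Ext^i_{R^{op}}(V, I_\lambda) \]
for all $i \geq 0$. Finally, each $I_\lambda$ is injective, so $\Ext^i_{R^{op}}(V, I_\lambda) = 0$ for all $i > 0$, and the direct sum vanishes as well, giving the claim.

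**Main obstacle.** There is no deep obstacle here; the only point requiring care is the interchange of $\Hom$ with the infinite direct sum, which is exactly where finite generation of the $P_n$ is used and cannot be dropped. One should state cleanly that for $P$ a finitely generated projective $R^{op}$-module the canonical map $\bigoplus_\lambda \Hom_{R^{op}}(P, I_\lambda) \to \Hom_{R^{op}}(P, \bigoplus_\lambda I_\lambda)$ is bijective — choosing a splitting $P \oplus Q \cong R^{\oplus n}$ reduces it to the evident case $P = R$ — and then note that this isomorphism is natural in $P$, hence compatible with the differentials of $P_\bullet$, so it induces an isomorphism of cochain complexes and therefore of cohomology. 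The acyclicity of each $I_\lambda$ is immediate from injectivity, and taking the direct sum finishes the argument.
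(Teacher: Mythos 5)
Your proposal is correct and follows essentially the same route as the paper: resolve $V$ by finitely generated projectives, commute $\Hom_{R^{op}}(P_i,-)$ with the direct sum, and conclude from the exactness of $\Hom_{R^{op}}(-,I_\lambda)$ for each injective $I_\lambda$. The only cosmetic difference is that you phrase the last step as ``homology commutes with direct sums'' while the paper notes directly that a direct sum of exact sequences is exact; these are the same observation.
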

\begin{proof}
	By assumption, $V_R$ admits a projective resolution $P_{\bullet}$ where each $R$-module $P_i$ is finitely generated.
	Therefore, we have the following natural isomorphism
	\[ \Hom_{R^{op}}(P_i, \bigoplus_{\lambda \in \Lambda} I_{\lambda}) \cong \bigoplus_{\lambda \in \Lambda} \Hom_{R^{op}}(P_i,  I_{\lambda}).\]
	Given that for all $\lambda \in \Lambda$, the sequence
	\[ 0 \To \Hom_{R^{op}}(V, I_{\lambda}) \To \Hom_{R^{op}}(P_0, I_{\lambda}) \To \cdots \To \Hom_{R^{op}}(P_i, I_{\lambda}) \To \cdots \]
	is exact, it follows that the sequence
	\[ 0 \To \Hom_{R^{op}}(V, \bigoplus_{\lambda \in \Lambda} I_{\lambda}) \To \Hom_{R^{op}}(P_0, \bigoplus_{\lambda \in \Lambda} I_{\lambda}) \To \cdots \To \Hom_{R^{op}}(P_i, \bigoplus_{\lambda \in \Lambda} I_{\lambda}) \To \cdots \]
	also remains exact.
	Taking homologies then yields $\Ext^i_{R^{op}}(V, \bigoplus_{\lambda \in \Lambda} I_{\lambda}) = 0$ for all $i > 0$.
	This confirms that $\bigoplus_{\lambda \in \Lambda} I_{\lambda}$ is acyclic with respect to the functor $\Hom_{R^{op}}(V, -)$.
\end{proof}

\begin{lem}\label{(-)^A-free}
	Given a $B$-module $V$, if $V_A$ is of type $\mathbf{FP_{\infty}}$, then $\Ext_{A^{op}}^i(V, B) \cong \Ext_{A^{op}}^i(V, A) \otimes H$ as right $H$-modules for $i \geq 0$.
\end{lem}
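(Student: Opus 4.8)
The plan is to compute $\Ext^i_{A^{op}}(V, B)$ using an injective resolution of $B$ in the category $\M^H_B$ of relative Hopf modules, exploiting that $B$ is a faithfully flat $H$-Galois extension so that $\M^H_B \simeq \M_A$. First I would choose an injective resolution $0 \to B \to E^{\bullet}$ of $B$ in $\M^H_B$. By Lemma \ref{Hopf-module-injective}, every $E^j$ is injective as a right $H$-comodule; consequently, using the structure theorem (Theorem \ref{faithful-flat-Hopf-Galois-extension-thm}) and the fact that injective $H$-comodules are direct summands of $V \otimes H$-type objects, each $E^j$ is a direct sum of copies of $A^{(j)} \otimes H$ for suitable $A$-modules $A^{(j)}$, and moreover $E^j$ is injective as a right $A$-module (since $(-)^{co H}$ sends injectives in $\M^H_B$ to injectives in $\M_A$ and $- \otimes_A B$ is exact and faithful). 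The key point is that $(E^j)^{co H}$ gives an injective resolution of $B^{co H} = A$ in $\M_A$, i.e. $0 \to A \to (E^{\bullet})^{co H}$ is an injective resolution of $A_A$.

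Next I would apply $\Hom_{A^{op}}(V, -)$ to $E^{\bullet}$. Since $V_A$ is of type $\mathbf{FP}_{\infty}$ and each $E^j$ is a direct sum of injective $A$-modules, Lemma \ref{Hom(V,-)-acyclic-object} guarantees that the $E^j$ are $\Hom_{A^{op}}(V, -)$-acyclic, so $\Ext^i_{A^{op}}(V, B)$ is the $i$-th cohomology of the complex $\Hom_{A^{op}}(V, E^{\bullet})$. By Lemma \ref{B-otimes-H-mod-str}, each $\Hom_{A^{op}}(V, E^j)$ is a Hopf module in $\M^H_H$; since $V_A$ is in particular finitely presented, Lemma \ref{lem-0}(2) gives $\Hom_{A^{op}}(V, E^j)^{co H} = \Hom_{A^{op}}(V, (E^j)^{co H})$. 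The fundamental theorem of Hopf modules identifies any Hopf module $W \in \M^H_H$ with $W^{co H} \otimes H$ naturally, so $\Hom_{A^{op}}(V, E^j) \cong \Hom_{A^{op}}(V, (E^j)^{co H}) \otimes H$ as right $H$-modules, naturally in $j$. Taking cohomology of the complex $\Hom_{A^{op}}(V, (E^{\bullet})^{co H}) \otimes H$ — and using that $- \otimes H$ is exact over the field $\kk$ — we obtain
\[ \Ext^i_{A^{op}}(V, B) \cong H^i\big( \Hom_{A^{op}}(V, (E^{\bullet})^{co H}) \big) \otimes H. \]
Since $0 \to A \to (E^{\bullet})^{co H}$ is an injective resolution of $A_A$ and, again by Lemma \ref{Hom(V,-)-acyclic-object}, the terms $(E^j)^{co H}$ are $\Hom_{A^{op}}(V, -)$-acyclic, the cohomology on the right is exactly $\Ext^i_{A^{op}}(V, A)$. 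This yields $\Ext^i_{A^{op}}(V, B) \cong \Ext^i_{A^{op}}(V, A) \otimes H$ as right $H$-modules, as claimed.

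The main obstacle I anticipate is checking that the right $H$-module structure coming from the Hopf-module identification $\Hom_{A^{op}}(V, E^j) \cong \Hom_{A^{op}}(V, (E^j)^{co H}) \otimes H$ is compatible, term by term and after passing to cohomology, with the right $H$-module structure that $\Ext^i_{A^{op}}(V, B)$ carries via the Miyashita-Ulbrich action (equation \eqref{H-mod-on-M^A}), i.e. that all the identifications are genuinely natural in the complex variable and $H$-linear, not merely $\kk$-linear isomorphisms in each degree. This requires tracking the coaction formula \eqref{H-comod-str-on-Hom-set} and the module formula \eqref{H-mod-str-on-Hom-set} through the fundamental theorem of Hopf modules; the computation in Lemma \ref{B-otimes-H-mod-str} already does the essential bookkeeping, so the remaining work is to verify that the connecting maps in $E^{\bullet}$ are morphisms in $\M^H_B$ (hence $\Hom_{A^{op}}(V, -)$ sends them to morphisms in $\M^H_H$) and that the functor $(-)^{co H}$ is exact on the relevant subcategory, both of which follow from faithful flatness via Theorem \ref{faithful-flat-Hopf-Galois-extension-thm}.
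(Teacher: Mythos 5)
Your proposal follows essentially the same route as the paper: resolve $B$ in $\M^H_B$ by terms that are $\Hom_{A^{op}}(V,-)$-acyclic, use Lemma \ref{B-otimes-H-mod-str} to view $\Hom_{A^{op}}(V,E^{\bullet})$ as a complex in $\M^H_H$, apply the fundamental theorem of Hopf modules together with the exactness of $(-)^{co H}$ on $\M^H_H$ and Lemma \ref{lem-0}(2), and identify $(E^{\bullet})^{co H}$ with an injective resolution of $A_A$. The one step that does not go through as you state it is the claim that each term $E^j$ of an \emph{arbitrary} injective resolution in $\M^H_B$ is injective (or a direct sum of injectives) as a right $A$-module: the justification you give --- that $(-)^{co H}$ preserves injectives and $-\otimes_A B$ is exact and faithful --- shows that $(E^j)^{co H}$ is an injective $A$-module, but does not show that $E^j \cong (E^j)^{co H}\otimes_A B$ is injective over $A$; likewise, injective $H$-comodules are only direct \emph{summands}, not direct sums, of cofree ones, and the objects $M\otimes H$ of $\M^H_B$ require $M$ to be a $B$-module, not merely an $A$-module. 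The paper sidesteps this by constructing the resolution explicitly with terms $E^j \cong I^j\otimes H$ where $I^j$ is an injective right $B$-module: then $I^j$ is injective over $A$ because $_AB$ is flat, the restriction of $I^j\otimes H$ to $A$ is a direct sum of copies of $I^j$ (the $A$-action sits only on the first tensorand since $\rho(a)=a\otimes 1$), so Lemma \ref{Hom(V,-)-acyclic-object} applies, and $(E^j)^{co H}\cong (I^j\otimes_A B)^{co H}\cong I^j$ is an injective $A$-module. Your argument is repaired either by adopting this construction or by observing that every injective object of $\M^H_B$ embeds as a direct summand of some $I\otimes H$ with $I$ injective over $B$, and acyclicity passes to direct summands; apart from this point, your proposal matches the paper's proof.
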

\begin{proof}
	For any $X \in {\M^H_B}$, there is an injective right $B$-module $I$ with a $B$-module inclusion $\iota: X \hookrightarrow I$.
	Note that $I \otimes H$ is a Hopf module in ${\M^H_B}$ via
	\begin{equation*}
		(x \otimes h) b = \sum_{(b)} xb_0 \otimes hb_1, \qquad \rho(x \otimes h) = \sum_{(h)} x \otimes h_1 \otimes h_2,
	\end{equation*}
	where $x \in I$, $h \in H$ and $b \in B$.
	There is a linear embedding $\widetilde{\iota}: X \hookrightarrow I \otimes H, \; x \mapsto \sum_{(x)} \iota(x_0) \otimes x_1$.
	For all $x \in X$ and $b \in B$, the following equalities hold:
	\[ \widetilde{\iota}(xb) = \sum_{(x), (b)} \iota(x_0b_0) \otimes x_1b_1 = \sum_{(x), (b)} \iota(x_0)b_0 \otimes x_1b_1, \qquad \text{ and} \]
	\[ \rho(\widetilde{\iota}(x)) = \rho(\sum_{(x)} \iota(x_0) \otimes x_1) = \sum_{(x)} \iota(x_0) \otimes x_1 \otimes x_2 = \sum_{(x)} \widetilde{\iota}(x_0) \otimes x_1.  \]
	These two conditions together imply that $\widetilde{\iota}: X \hookrightarrow I \otimes H$ defines a morphism in ${\M^H_B}$.
	By induction, we construct an exact complex $0 \to B \to E^0 \to E^1 \to \cdots$ in $\M^H_B$ where each term $E^i$ is isomorphic to $I^i \otimes H$ and $I^i$ is an injective $B$-module for any $i \in \N$.
	
	Since $_{A}B$ is flat, any injective right $B$-module $I$ is also injective as an $A$-module.
	Moreover, it's clear that $I \otimes H$ can be viewed as a direct sum of copies of $I$ as $A$-modules.
	By virtue of Lemma \ref{Hom(V,-)-acyclic-object}, $I \otimes H$ is $\Hom_{A^{op}}(V, -)$-acyclic.
	Thus, we infer that for all $i \geq 0$,
	\[ \Ext^i_{A^{op}}(V, B) \cong \mathrm{H}^i(\Hom_{A^{op}}(V, E^{\bullet})) \]
	as $H$-modules.
	
	As $\Hom_{A^{op}}(V, E^{\bullet})$ forms a complex in $\M^H_H$ by Lemma \ref{B-otimes-H-mod-str}, we obtain the following isomorphisms
	\begin{align*}
		\mathrm{H}^i(\Hom_{A^{op}}(V, E^{\bullet})) & \cong \mathrm{H}^i(\Hom_{A^{op}}(V, E^{\bullet}))^{co H} \otimes H & \\
		& \cong \mathrm{H}^i(\Hom_{A^{op}}(V, E^{\bullet})^{co H}) \otimes H & \text{ since $(-)^{co H}$ is exact on } \M^H_H \\
		& \cong \mathrm{H}^i(\Hom_{A^{op}}(V, (E^{\bullet})^{co H})) \otimes H & \text{ by Lemma \ref{lem-0} (2)}.
	\end{align*}
	
	Since the functor $(-)^{co H}: {\M^H_B} \to \M_A$ is an equivalence, the sequence
	\[ 0 \To A = B^{co H} \To (E^0)^{co H} \To (E^1)^{co H} \To \cdots\]
	is exact.
	As $- \otimes_A B: \M_A \to {\M^H_B}$ is the quasi-inverse of $(-)^{co H}$, we have $(I^i \otimes_A B)^{co H} \cong I^i$ as $A$-modules.
	Consider the composition of the following maps:
	\[ I^i \otimes_A B \stackrel{\cong}{\To} I^i \otimes_B (B \otimes_A B) \stackrel{\id_{I^i} \otimes \beta}{\To} I^i \otimes_B (B \otimes H) \stackrel{\cong}{\To} I^i \otimes H, \]
	where $x \otimes_A b \mapsto \sum_{(b)} xb_0 \otimes b_1$.
	This composition defines an isomorphism between $I^i \otimes_A B$ and $I^i \otimes H$ in $\M^H_B$.
	Thus, $(E^i)^{co H} = (I^i \otimes H)^{co H} \cong (I^i \otimes_A B)^{co H} \cong I^i$ as right $A$-modules.
	Therefore, the complex $(E^{\bullet})^{co H}$ constitutes an injective resolution of $A_A$.
	Then we derive the following isomorphism of right $H$-modules,
	\[ \Ext^i_{A^{op}}(V, B) \cong \mathrm{H}^i(\Hom_{A^{op}}(V, E^{\bullet})) \cong \mathrm{H}^i(\Hom_{A^{op}}(V, (E^{\bullet})^{co H})) \otimes H \cong \Ext^i_{A^{op}}(V, A) \otimes H. \]
	This leads us to our desired conclusion.
\end{proof}
%
%


Recall that an algebra $R$ is called affine if it is finitely generated as a $\kk$-algebra.
Let $\Kdim$ denote the Krull dimension.
A polynomial identity algebra, or PI algebra for short, is an algebra satisfying a polynomial identity. We refer to \cite[MR, Chapter 13]{MR2001} for some basic materials about PI algebras.

Now, let's proceed with the following hypothesis.

\begin{hypo}\label{WZ-hypo}
	Let $R$ be a noetherian affine PI algebra. For any $i \geq 0$, 
	\begin{enumerate}
		\item $\Ext^i_{R^{op}}(U, R) \neq 0$ if and only if $\Ext^i_{R^{op}}(V, R) \neq 0$ for all simple $R$-modules $U$ and $V$, and 
		\item $\Ext^i_{R^{op}}(-, R)$ is an exact functor on $R$-modules of finite length.
	\end{enumerate}
\end{hypo}

In \cite{WZ2003}, Wu and Zhang gave a useful criterion for noetherian affine PI algebras to be AS Gorenstein.

\begin{thm}\cite[Corollary 2.10, Propostion 3.2]{WZ2003}\label{Wu-Zhang-thm}
	If the Hypothesis \ref{WZ-hypo} holds, then
	\begin{enumerate}
		\item $\injdim({_RR}) = \injdim({R_R}) = d = \Kdim R < + \infty$;
		\item $R$ is AS Gorenstein of dimension $d$.
	\end{enumerate}
\end{thm}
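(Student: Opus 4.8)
The goal is to prove Theorem~\ref{Wu-Zhang-thm}: assuming Hypothesis~\ref{WZ-hypo} for a noetherian affine PI algebra $R$, one has $\injdim({_RR}) = \injdim({R_R}) = d = \Kdim R < +\infty$ and $R$ is AS Gorenstein of dimension $d$. The plan is to invoke the two cited results of Wu--Zhang directly, since the statement is literally \cite[Corollary 2.10, Proposition 3.2]{WZ2003}; but to make the excerpt self-contained I would indicate the mechanism of proof.

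\textbf{Outline of the argument.} First I would recall that a noetherian affine PI algebra $R$ has finite (and equal, left and right) Krull dimension, say $d = \Kdim R$, by the standard theory of PI rings \cite[MR, Chapter 13]{MR2001}, and that $R$ satisfies the second layer condition, so that localization and the theory of injective resolutions behave well. The key homological input is that for such $R$ there is a filtration of the injective resolution of $R$ by the ``critical'' composition factors, and Hypothesis~\ref{WZ-hypo}(1) forces the relevant $\Ext$-groups $\Ext^i_{R^{op}}(S, R)$ to vanish for $i$ outside a single value $d'$, uniformly over all simple modules $S$; Hypothesis~\ref{WZ-hypo}(2) then lets one propagate this vanishing from simple modules to all finite-length modules by dévissage along composition series (exactness of $\Ext^i_{R^{op}}(-,R)$ on finite-length modules forces the higher $\Ext$'s to vanish, since an exact functor on an abelian category of finite length objects that kills simples kills everything, and a right-exact piece of a $\delta$-functor that is exact must have its connecting maps vanish). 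From the vanishing one deduces that $\injdim(R_R)$ is finite and equals $d'$, and symmetrically $\injdim({_RR}) = d'$ on the left; the core identification $d' = \Kdim R$ comes from the fact that for a noetherian affine PI algebra the injective dimension, when finite, is computed by the Krull dimension via the grade/codimension duality (this is where the PI and affineness hypotheses are genuinely used, through Stafford--Zhang-type results on the behaviour of $\Ext^i_{R^{op}}(S,R)$ as a function of $\Kdim R/\Ann S$). Finally, combining the uniform vanishing with finite-dimensionality of simple modules over an affine PI algebra (a consequence of the Artin--Tate lemma / Kaplansky's theorem) shows condition (2) of Definition~\ref{AS Gorenstein-defn} holds on both sides, so $R$ is AS Gorenstein of dimension $d$.

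\textbf{Steps, in order.} (i) Establish $d := \Kdim R < \infty$ and that all simple $R$-modules are finite dimensional. (ii) Use Hypothesis~\ref{WZ-hypo}(1) to get an integer $d'$ with $\Ext^i_{R^{op}}(S,R) = 0$ for $i \neq d'$ and all simple $S$ (one first checks some such $d'$ exists, using finiteness of $\injdim$, which itself follows from the noetherian affine PI hypothesis together with the Hypothesis). (iii) Promote this to: $\Ext^i_{R^{op}}(M,R) = 0$ for $i \neq d'$ and all finite-length $M$, using Hypothesis~\ref{WZ-hypo}(2) and induction on length together with the long exact sequence of $\Ext$. (iv) Conclude $\injdim(R_R) = d'$, by a standard argument bounding injective dimension via vanishing of $\Ext$ against simples over a noetherian ring; run the mirror argument on the left. (v) Identify $d' = d = \Kdim R$ using the PI-specific codimension computation from \cite{WZ2003}. (vi) Assemble: finite equal injective dimensions plus finite-dimensional $\Ext^d_{R^{op}}(S,R)$ give AS Gorenstein of dimension $d$ by Definition~\ref{AS Gorenstein-defn}.

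\textbf{Main obstacle.} The delicate point is step (v), the equality $\injdim(R) = \Kdim R$: the inequality $\injdim(R) \le \Kdim R$ and the reverse require the full strength of the Wu--Zhang analysis of how $\Ext^i_{R^{op}}(S,R)$ detects $\Kdim(R/\Ann S)$ for simple $S$, together with the existence of ``enough'' simple modules of each Krull dimension $\le d$ in a noetherian affine PI algebra (via the stratification of $\Spec R$ and the second layer condition). The promotion step (iii) also hides a subtlety: exactness of $\Ext^i_{R^{op}}(-,R)$ on finite-length modules as a \emph{functor} (Hypothesis~\ref{WZ-hypo}(2)) is what forces the connecting homomorphisms in the long exact $\Ext$ sequence to vanish, and hence the uniform-vanishing range to be preserved under extensions; without that hypothesis one would only get an upper bound on the range, not its collapse to a single degree. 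Since both of these are exactly the content of the cited \cite[Corollary 2.10, Proposition 3.2]{WZ2003}, I would in the write-up simply cite those and include the short dévissage in step (iii) for completeness.
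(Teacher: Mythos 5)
The paper offers no proof of this theorem: it is imported verbatim from \cite{WZ2003} (Corollary 2.10 and Proposition 3.2), and your decision to simply invoke those results is exactly what the paper does. Your supplementary sketch of the Wu--Zhang mechanism (d\'evissage from simples to finite-length modules via Hypothesis \ref{WZ-hypo}(2), and the identification $\injdim R = \Kdim R$ via their PI-specific analysis) is consistent with the cited source and introduces no errors, so nothing further is needed.
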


Using the above theorem of Wu and Zhang, we promptly obtain the following theorem.

\begin{thm}\label{PI-Hopf-Galois-ext-is-AS-Gorenstein}
	Let $H$ be a Hopf algebra, and $B$ is a noetherian affine PI algebra which is a faithfully flat $H$-Galois extension of $A$.
	If $A$ is AS Gorenstein, then $B$ is also AS Gorenstein.
	Further, if $H$ is AS Gorenstein of dimension $d_H$, then $\injdim({B_B}) = \injdim({A_A}) + d_H$. 
\end{thm}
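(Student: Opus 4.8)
The plan is to verify that $B$ satisfies Hypothesis \ref{WZ-hypo}, so that Theorem \ref{Wu-Zhang-thm} applies and yields that $B$ is AS Gorenstein with $\injdim(B_B) = \Kdim B < +\infty$. Since $B$ is noetherian affine PI by hypothesis, what remains is exactly the two conditions in the Hypothesis: (1) the vanishing/non-vanishing of $\Ext^i_{B^{op}}(-, B)$ is independent of the choice of simple $B$-module, and (2) $\Ext^i_{B^{op}}(-, B)$ is exact on $B$-modules of finite length. The key tool is Lemma \ref{(-)^A-free}: for any $B$-module $V$ with $V_A$ of type $\mathbf{FP}_{\infty}$, one has $\Ext^i_{A^{op}}(V, B) \cong \Ext^i_{A^{op}}(V, A) \otimes H$ as right $H$-modules, combined with the Stefan-type spectral sequence of Lemma \ref{H-Galois-spectral-sequenceII}, namely $\Ext^p_{H^{op}}(\kk, \Ext^q_{A^{op}}(V, N)) \Rightarrow \Ext^{p+q}_{B^{op}}(V, N)$.

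First I would record that any simple (hence finite-dimensional, as $B$ is affine PI) right $B$-module $V$ is finitely generated over $B$, and since $B$ is module-finite-free-ish... more precisely since $B$ is faithfully flat and noetherian, $V$ is finitely generated over $A$ as well; in fact a finite-length $B$-module is finitely generated over $A$, and using that $A$ is noetherian one gets that $V_A$ is of type $\mathbf{FP}_{\infty}$ over $A$ (here one needs $A$ itself noetherian, which follows from faithfully flat descent of the noetherian property from $B$). Then apply Lemma \ref{(-)^A-free} to get $\Ext^q_{A^{op}}(V, B) \cong \Ext^q_{A^{op}}(V, A) \otimes H$. Since $A$ is AS Gorenstein, say of injective dimension $d_A$, we have $\Ext^q_{A^{op}}(V, A) = 0$ for $q \neq d_A$ and $\Ext^{d_A}_{A^{op}}(V, A)$ finite-dimensional, hence $\Ext^q_{A^{op}}(V, B) = 0$ for $q \neq d_A$ and $\Ext^{d_A}_{A^{op}}(V, B) \cong \Ext^{d_A}_{A^{op}}(V, A) \otimes H$. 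Feeding this into the spectral sequence, it collapses to give $\Ext^{p+d_A}_{B^{op}}(V, B) \cong \Ext^p_{H^{op}}(\kk, \Ext^{d_A}_{A^{op}}(V, A) \otimes H)$. Now $\Ext^{d_A}_{A^{op}}(V, A) \otimes H$ is a free $H$-module of finite rank (as a right $H$-module, via the diagonal-type structure it is a finite direct sum of copies of $H$), and $\Ext^p_{H^{op}}(\kk, H^{\oplus r}) = \Ext^p_{H^{op}}(\kk, H)^{\oplus r}$. So $\Ext^{p+d_A}_{B^{op}}(V, B)$ is controlled entirely by $\Ext^p_{H^{op}}(\kk, H)$ together with the rank $r = \dim_{\kk}\Ext^{d_A}_{A^{op}}(V, A)$, which by \cite[Lemma 1.4]{RWZ2021} is nonzero for every such $V$ (and finite). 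This immediately gives condition (1): $\Ext^i_{B^{op}}(V, B) \neq 0$ precisely when $i = p + d_A$ with $\Ext^p_{H^{op}}(\kk, H) \neq 0$, a condition that does not depend on $V$. For condition (2), exactness on finite-length modules follows because $\Ext^q_{A^{op}}(-, A)$ is exact on finite-length $A$-modules (AS Gorenstein property of $A$, again \cite[Lemma 1.4]{RWZ2021} and the concentration in degree $d_A$), tensoring with $H$ over $\kk$ is exact, and $\Ext^p_{H^{op}}(\kk, -)$ applied to free modules is exact; chasing the (collapsed) spectral sequence through a short exact sequence of finite-length $B$-modules then gives exactness of $\Ext^i_{B^{op}}(-, B)$.

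Having verified Hypothesis \ref{WZ-hypo}, Theorem \ref{Wu-Zhang-thm} gives that $B$ is AS Gorenstein and $\injdim(B_B) = \injdim({}_BB) = \Kdim B < +\infty$. For the dimension formula, suppose in addition $H$ is AS Gorenstein of dimension $d_H$, so $\Ext^p_{H^{op}}(\kk, H) = 0$ for $p \neq d_H$ and $= \kk$ for $p = d_H$ (using Lemma \ref{AS Gorenstein-lem}, noting $H$ is of type $\mathbf{FP}_{\infty}$ since $H \hookrightarrow B$ faithfully flat and $B$ noetherian forces $H$ noetherian, hence $\mathbf{FP}_{\infty}$). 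Then the collapsed spectral sequence gives $\Ext^i_{B^{op}}(V, B) = 0$ unless $i = d_A + d_H$, where it is $\Ext^{d_A}_{A^{op}}(V, A) \otimes H \neq 0$. Hence the injective dimension of $B_B$, which equals $\sup\{ i : \Ext^i_{B^{op}}(V, B) \neq 0 \text{ for some simple } V\}$ by the AS Gorenstein structure, is exactly $d_A + d_H = \injdim(A_A) + d_H$.

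\textbf{Main obstacle.} The technical heart is the verification that every finite-length $B$-module $V$ has $V_A$ of type $\mathbf{FP}_{\infty}$ over $A$ and that Lemma \ref{(-)^A-free} genuinely applies — this requires knowing $A$ is noetherian (faithfully flat descent from $B$) and that finite-length over $B$ implies finite generation over $A$, which uses the faithfully flat $H$-Galois structure rather than just flatness. A secondary subtlety is ensuring the right $H$-module $\Ext^{d_A}_{A^{op}}(V, A) \otimes H$ really is free of finite rank as a right $H$-module (so that $\Ext^p_{H^{op}}(\kk, -)$ behaves well on it); this is the standard fact that $M \otimes H$ with the diagonal-type structure is free, applied here with $M$ finite-dimensional. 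Once these are in place, the spectral sequence collapses cleanly and everything is bookkeeping.
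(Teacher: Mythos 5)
Your proposal is correct and follows essentially the same route as the paper's own proof: both verify Hypothesis \ref{WZ-hypo} via Lemma \ref{(-)^A-free} and the collapsed spectral sequence of Lemma \ref{H-Galois-spectral-sequenceII}, using freeness of $\Ext^{d_A}_{A^{op}}(V,B)$ as an $H$-module to get independence of the simple module and splitness (hence exactness) on finite-length modules, then invoke Theorem \ref{Wu-Zhang-thm}. Your explicit treatment of the dimension formula $\injdim(B_B)=\injdim(A_A)+d_H$ is a welcome addition, as the paper leaves that step implicit.
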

\begin{proof}
	Given that $B$ is an affine PI algebra, every simple $B$-module has finite dimension \cite[13.10.3(i)]{MR2001}.
	Let $U$ be a simple right $B$-module. Then $U$ is finite dimensional.
	Since $B$ is faithfully flat as a right $A$-module, $A$ is right noetherian.
	Consequently, $U$ is an $A$-module of type $\mathbf{FP_{\infty}}$.
	This yields the isomorphism $\Ext_{A^{op}}^i(U, B) \cong \Ext_{A^{op}}^i(U, A) \otimes_A B$ for all $i \geq 0$.
	If $A$ is AS Gorenstein of dimension $d_A$, then $\Ext_{A^{op}}^i(U, A) = 0$ for $i \neq d_A$ and $\Ext_{A^{op}}^{d_A}(U, A)$ is non-zero and finte dimensional.
	Hence $\Ext_{A^{op}}^i(U, B) \neq 0$ if and only if $i = d_A$.
	By Lemma \ref{H-Galois-spectral-sequenceII}, we infer that for any integer $i \geq 0$, 
	$$\Ext_{B^{op}}^i(U, B) \cong \begin{cases}
		0, & i < d_A; \\
		\Ext_{H^{op}}^{i - d_A}(\kk, \Ext_{A^{op}}^{d_A}(U, B)), & i \geq d_A. \\
	\end{cases}$$
	According to Lemma \ref{(-)^A-free}, $\Ext_{A^{op}}^{d_A}(U, B)$ constitutes a free $H$-module of rank $\dim(\Ext_{A^{op}}^{d_A}(U, A))$.
	Therefore,
	\[ \Ext^i_{B^{op}}(U, B) \neq 0 \Longleftrightarrow \Ext_{H^{op}}^{i - d_A}(\kk, \Ext_{A^{op}}^{d_A}(U, B)) \neq 0 \Longleftrightarrow \Ext_{H^{op}}^{i - d_A}(\kk, H) \neq 0.\]
	For another simple $B$-module $V$,
	\[ \Ext^i_{B^{op}}(U, B) \neq 0 \Longleftrightarrow \Ext_{H^{op}}^{i - d_A}(\kk, H) \neq 0 \Longleftrightarrow \Ext^i_{B^{op}}(V, B) \neq 0. \]
	Thus, $B$ fulfills Hypothesis \ref{WZ-hypo} (1).
	
	Consider an exact sequence $0 \to W' \to W \to W'' \to 0$ of finite length left $B$-modules.
	If $A$ is AS-Groenstein of dimension $d_A$, then we obtain a short exact sequence
	\begin{equation}\label{ex-seq-ext}
		0 \To \Ext_{A^{op}}^{d_A}(W'', B) \To \Ext_{A^{op}}^{d_A}(W, B) \To \Ext_{A^{op}}^{d_A}(W', B) \To 0
	\end{equation}
	in $\M^H_H$.
	By Lemma \ref{(-)^A-free}, each term in \eqref{ex-seq-ext} is a projective $H$-module, making \eqref{ex-seq-ext} split as an exact sequence of $H$-modules.
	Hence, the following sequence  
	\[ 0 \to \Ext_{H^{op}}^{i - d_A}(\kk, \Ext_{A^{op}}^{d_A}(W'', B)) \to \Ext_{H^{op}}^{i - d_A}(\kk, \Ext_{A^{op}}^{d_A}(W, B)) \to \Ext_{H^{op}}^{i - d_A}(\kk, \Ext_{A^{op}}^{d_A}(W', B)) \to 0 \]
	remains exact.
	Consequently, for all $i \geq 0$, we have the exact sequence 
	\[ 0 \To \Ext_{B^{op}}^{i}(W'', B) \To \Ext_{B^{op}}^{i}(W', B) \To \Ext_{B^{op}}^{i}(W'', B) \To 0, \]
	showing that $B$ satisfies Hypothesis \ref{WZ-hypo} (2).
	Therefore, by Theorem \ref{Wu-Zhang-thm}, the desired conclusion is reached.
\end{proof}

It is a well-established fact that AS Gorenstein noetherian affine PI algebras exhibit numerous favorable homological properties, see \cite[Proposition 3.7]{WZ2003} for instance.

\section{Homological dimension of the Ehresmann-Schauenburg bialgebroids}


In this section, we shall revisit the definition and essential attributes of Ehresmann-Schauenburg bialgebroids, further elucidating certain homological properties inherent to these structures.

\subsection{Hopf bimodule categories and Ehresmann-Schauenburg bialgebroids}

Let $H$ be a Hopf algebra, $R$ and $T$ be two $H$-comodule algebras.
A relative Hopf bimodule $X \in {_R\!\M_T^H}$ is an object of both $_R\!\M^H$ and $\M_T^H$ with the same comodule structure and such that the two module structures make it an $R$-$T$-bimodule.

For any $R \otimes T^{op}$-module $M$, $M \otimes H$ can be viewed as a Hopf module in ${_R\!\M_T^H}$ through the structure defined by
\begin{equation}\label{Hopf-bimodule-str-on-M-otimes-H}
	r(m \otimes h)t = \sum_{(r), (t)} r_0 m t_0 \otimes r_1 h t_1, \qquad \rho(m \otimes h) = \sum_{(h)} m \otimes h_1 \otimes h_2,
\end{equation}
for any $m \in M$, $h \in H$, $r \in R$ and $t \in T$.
The functor $- \otimes H$ serves as a right adjoint to the forgetful functor from ${_R\!\M_T^H}$ to ${_R\!\M_T}$, that is, there exists a natural isomorphism
\begin{equation}\label{adjoint-for-Hopf-bimodule}
	\Hom_{_R\!\M_T}(X, M) \To \Hom_{_R\!\M^H_T}(X, M \otimes H), \;\; f \mapsto \big( x \mapsto \sum_{(x)} f(x_0) \otimes x_1 \big)
\end{equation}
for any $X \in {_R\!\M_T^H}$ and $M \in {_R\!\M_T}$.
The inverse map is given by $g \mapsto (\id_M \otimes \varepsilon) \circ g$.

It is evident that the left (or right) adjoint functor of an exact functor preserves projective (or injective) objects. Given that$- \otimes H: {_R\!\M_T} \to {_R\!\M_T^H}$ and the forgetful functor ${_R\!\M_T^H} \to {_R\!\M_T}$ are both exact, the following lemmas can be derived.

\begin{lem}\label{proj-Hopf-bimodule-is-proj}
	(1) If $X$ is a projective object in ${_R\!\M_T^H}$, then $X$ is projective $R \otimes T^{op}$-module.
	
	(2) If $M$ is an injective $R \otimes T^{op}$-module, then $M \otimes H$ is an injective object in ${_R\!\M_T^H}$.
\end{lem}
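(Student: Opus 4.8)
The statement to prove is Lemma~\ref{proj-Hopf-bimodule-is-proj}, which asserts that the forgetful functor ${_R\!\M_T^H} \to {_R\!\M_T}$ sends projectives to projectives, and dually that $- \otimes H$ sends injective $R \otimes T^{op}$-modules to injective objects of ${_R\!\M_T^H}$. The plan is to deduce both parts from the adjunction already recorded in \eqref{adjoint-for-Hopf-bimodule} together with the exactness of the two functors involved.

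For part (2), I would argue as follows. Let $M$ be an injective $R \otimes T^{op}$-module. The functor $\Hom_{_R\!\M^H_T}(-, M \otimes H)$ is, by the natural isomorphism \eqref{adjoint-for-Hopf-bimodule}, isomorphic to the composite of the forgetful functor ${_R\!\M_T^H} \to {_R\!\M_T}$ followed by $\Hom_{_R\!\M_T}(-, M)$. The forgetful functor is exact (indeed it is the identity on underlying bimodules), and $\Hom_{_R\!\M_T}(-, M)$ is exact because $M$ is injective as an $R \otimes T^{op}$-module; a composite of exact functors is exact, so $\Hom_{_R\!\M^H_T}(-, M \otimes H)$ is exact, which is exactly the statement that $M \otimes H$ is an injective object of ${_R\!\M_T^H}$.

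Part (1) is the formal dual and uses the adjunction from the other side. Since $- \otimes H$ is a \emph{right} adjoint to the forgetful functor $U : {_R\!\M_T^H} \to {_R\!\M_T}$ by \eqref{adjoint-for-Hopf-bimodule}, the forgetful functor $U$ is the \emph{left} adjoint. A left adjoint whose right adjoint is exact preserves projectives: if $X$ is projective in ${_R\!\M_T^H}$, then $\Hom_{_R\!\M_T}(UX, -) \cong \Hom_{_R\!\M_T^H}(X, - \otimes H)$ is the composite of the exact functor $- \otimes H$ with the exact functor $\Hom_{_R\!\M_T^H}(X, -)$, hence exact, so $UX = X$ is projective as an $R \otimes T^{op}$-module. (Alternatively one observes directly that $X$, being a summand of a free object in ${_R\!\M_T^H}$, is a summand of $F \otimes H$ for a free $R\otimes T^{op}$-module $F$, and $F \otimes H$ is a direct sum of copies of $F$ as a bimodule, hence projective.)

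There is no real obstacle here; the only point requiring a modicum of care is to invoke the adjunction \eqref{adjoint-for-Hopf-bimodule} on the correct side for each of the two statements, and to note explicitly that the forgetful functor ${_R\!\M_T^H} \to {_R\!\M_T}$ is exact and that $\Hom_{_R\!\M^H_T}(X,-)$ (resp.\ $\Hom_{_R\!\M_T}(-,M)$) is exact precisely when $X$ is projective (resp.\ $M$ injective) in the respective category. Both parts then follow by the standard fact that adjoints to exact functors preserve the appropriate one of projectivity/injectivity, as already asserted in the sentence preceding the lemma.
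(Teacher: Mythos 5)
Your argument is correct and is precisely the paper's: both parts are deduced from the adjunction \eqref{adjoint-for-Hopf-bimodule} between the forgetful functor and $- \otimes H$, together with the exactness of these two functors, exactly as asserted in the sentence preceding the lemma. (Only the parenthetical alternative in part (1) is shaky --- $- \otimes H$ is a \emph{right} adjoint here, so there is no reason for objects of the form $F \otimes H$ to be projective generators of ${_R\!\M_T^H}$ --- but your main argument does not rely on it.)
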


%

\begin{lem}\label{Hopf-bimod-cat-is-Grothendieck}
	Let $H$ be a Hopf algebra, $R$ and $T$ be two $H$-comodule algebras.
	
	(1)
	Then ${_R\!\M^H_T}$ has enough injective objects.
	
	(2)
	For any $X \in {_R\!\M^H_T}$, there is an exact complex $0 \to X \to I^0 \otimes H \to I^1 \otimes H \to \cdots$ in $_R\!\M^H_T$ where $I^i$ is an injective $R \otimes T^{op}$-module for any $i \in \N$.
\end{lem}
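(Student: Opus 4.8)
The plan is to build everything on the adjunction \eqref{adjoint-for-Hopf-bimodule}, which exhibits $-\otimes H\colon {_R\!\M_T}\to{_R\!\M^H_T}$ as a right adjoint of the forgetful functor $U\colon{_R\!\M^H_T}\to{_R\!\M_T}$. Since $U$ is exact and faithful, its right adjoint $-\otimes H$ preserves injectives; this is exactly Lemma \ref{proj-Hopf-bimodule-is-proj}(2), so every object of the form $I\otimes H$ with $I$ an injective $R\otimes T^{op}$-module is injective in ${_R\!\M^H_T}$. Before invoking this I would record that ${_R\!\M^H_T}$ is abelian, with kernels and cokernels computed on underlying vector spaces: the category of $H$-comodules over a field is abelian, the $R$-$T$-bimodule structure passes to subobjects and quotients, and the relative-Hopf-bimodule compatibility conditions are preserved under these operations. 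This is what makes the iteration in part (2) meaningful.

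For part (1), take $X\in{_R\!\M^H_T}$ and, viewing it merely as an $R\otimes T^{op}$-module, choose an embedding $\iota\colon X\hookrightarrow I$ into an injective $R\otimes T^{op}$-module $I$ (module categories have enough injectives). Applying the bijection \eqref{adjoint-for-Hopf-bimodule} to $\iota$ yields a morphism $\widetilde{\iota}\colon X\to I\otimes H$ in ${_R\!\M^H_T}$ given by $x\mapsto\sum_{(x)}\iota(x_0)\otimes x_1$. Composing with $\id_I\otimes\vep$ recovers $\iota$, since that is precisely the inverse map in \eqref{adjoint-for-Hopf-bimodule} and $(\id\otimes\vep)\circ\rho=\id$; hence $\widetilde{\iota}$ is a monomorphism. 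Combined with the injectivity of $I\otimes H$ observed above, this shows ${_R\!\M^H_T}$ has enough injective objects.

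For part (2), I would iterate the previous step. Set $X^0:=X$. Given a monomorphism $X^i\hookrightarrow I^i\otimes H$ in ${_R\!\M^H_T}$ with $I^i$ injective over $R\otimes T^{op}$, let $X^{i+1}:=\Coker$, which again lies in ${_R\!\M^H_T}$ by abelianness, and apply the construction of part (1) to $X^{i+1}$. Splicing the resulting short exact sequences $0\to X^i\to I^i\otimes H\to X^{i+1}\to 0$ produces the exact complex $0\to X\to I^0\otimes H\to I^1\otimes H\to\cdots$ in ${_R\!\M^H_T}$, as claimed.

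There is no substantial obstacle: the only points requiring (routine) care are that $\widetilde{\iota}$ is genuinely a morphism in ${_R\!\M^H_T}$, which is guaranteed by the naturality of \eqref{adjoint-for-Hopf-bimodule} and could also be checked by hand exactly as in the first construction inside the proof of Lemma \ref{(-)^A-free}, and the monicity of $\widetilde{\iota}$, which rests on the counit identity. Indeed, part (1) is precisely the two-sided analogue of the embedding $X\hookrightarrow I\otimes H$ built there in the one-sided setting.
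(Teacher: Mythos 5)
Your proposal is correct and follows essentially the same route as the paper: embed $X$ into an injective $R\otimes T^{op}$-module $I$, lift via $x\mapsto\sum_{(x)}\iota(x_0)\otimes x_1$ to a monomorphism into the injective object $I\otimes H$ (Lemma \ref{proj-Hopf-bimodule-is-proj}(2)), and iterate for part (2). The only cosmetic difference is that you invoke the adjunction \eqref{adjoint-for-Hopf-bimodule} and its naturality where the paper verifies by direct computation that $\widetilde{\iota}$ is a morphism in ${_R\!\M^H_T}$.
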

\begin{proof}
	(1)
	For any $X \in {_R\!\M^H_T}$, there is an injective $R \otimes T^{op}$-module $I$ with a $R \otimes T^{op}$-module inclusion $\iota: X \hookrightarrow I$.
	By Lemma \ref{proj-Hopf-bimodule-is-proj}, $I \otimes H$ is an injective object.
	There is a linear embedding $\widetilde{\iota}: X \hookrightarrow I \otimes H, \; x \mapsto \sum_{(x)} \iota(x_0) \otimes x_1$.
	Since
	\[ \widetilde{\iota}(rxt) = \sum_{(r), (x), (t)} \iota(r_0 x_0 t_0) \otimes r_1x_1t_1 = \sum_{(r), (x), (t)} r_0 \iota(x_0) t_0 \otimes r_1x_1t_1, \qquad \text{ and} \]
	\[ \rho(\widetilde{\iota}(x)) = \rho(\sum_{(x)} \iota(x_0) \otimes x_1) = \sum_{(x)} \iota(x_0) \otimes x_1 \otimes x_2 = \sum_{(x)} \widetilde{\iota}(x_0) \otimes x_1,  \]
	it follows that $\widetilde{\iota}: X \hookrightarrow I \otimes H$ is a morphism in ${_R\!\M^H_T}$.
	This implies that ${_R\!\M^H_T}$ has enough injective objects.
	
	(2) can be proved by repeatedly applying (1).
%
%
\end{proof}

For any $H$-comodule $V$, $R \otimes V \otimes T$ can be viewed as a Hopf module in ${_R\!\M_T^H}$ via
\begin{equation}\label{Hopf-bimodule-str-on-M-otimes-H'}
	r'(r \otimes v \otimes t)t' = r'r \otimes v \otimes tt', \qquad \rho(r \otimes v \otimes t) = \sum_{(r), (v), (t)} (r_0 \otimes v_0 \otimes t_0) \otimes r_1v_1t_1,
\end{equation}
for any $v \in V$, $r, r' \in R$ and $t, t' \in S$.
One can readily verify that the functor $R \otimes - \otimes T: {\M^H} \to {_R\!\M_T^H}$ serves as a left adjoint to the forgetful functor from ${_R\!\M_T^H}$ to ${\M^H}$.
As a result, there exists a natural isomorphism given by
\begin{equation}\label{adjoint-for-Hopf-bimodule'}
	\Hom_{\M^H}(V, X) \To \Hom_{_R\!\M^H_T}(R \otimes V \otimes T, X), \;\; f \mapsto \big( v \mapsto f(1_R \otimes v \otimes 1_T) \big)
\end{equation}
for any $X \in {_R\!\M_T^H}$ and $V \in {\M^H}$.
The inverse map is defined by $g \mapsto \big( r \otimes v \otimes t \mapsto r g(v) t \big)$.

Since $R \otimes - \otimes T: {_R\!\M_T} \to {_R\!\M_T^H}$ and the forgetful functor ${_R\!\M_T^H} \to {\M^H}$ are both exact, we have the following lemma.

\begin{lem}\label{inj-Hopf-bimodule-is-inj}
	(1) If $X$ is an injective object in ${_R\!\M_T^H}$, then $X$ is injective $H$-comodule.
	
	(2) If $V$ is a projective $H$-comodule, then $R \otimes V \otimes T$ is a projective object in ${_R\!\M_T^H}$.
\end{lem}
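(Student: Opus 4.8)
The plan is to deduce both statements formally from the adjunction recorded in \eqref{adjoint-for-Hopf-bimodule'}: writing $U \colon {_R\!\M_T^H} \to {\M^H}$ for the forgetful functor, that isomorphism says that $R \otimes - \otimes T$ is left adjoint to $U$. Combining this with the general principle recalled above --- a right adjoint of an exact functor preserves injective objects, and a left adjoint of an exact functor preserves projective objects --- will give the lemma, provided one checks that the relevant functors are exact.

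So first I would observe that both $R \otimes - \otimes T \colon {\M^H} \to {_R\!\M_T^H}$ and $U$ are exact. Indeed, a sequence in either of these categories is exact if and only if it is exact on the underlying $\kk$-vector spaces, and since $\kk$ is a field the functor $R \otimes - \otimes T$ is exact on vector spaces, while exactness of $U$ is immediate. For part (1), $U$ is a right adjoint of the exact functor $R \otimes - \otimes T$, hence carries injective objects of ${_R\!\M_T^H}$ to injective objects of ${\M^H}$, so an injective $X \in {_R\!\M_T^H}$ is an injective $H$-comodule. For part (2), $R \otimes - \otimes T$ is a left adjoint of the exact functor $U$, hence carries projective objects of ${\M^H}$ to projective objects of ${_R\!\M_T^H}$, so $R \otimes V \otimes T$ is a projective object of ${_R\!\M_T^H}$ whenever $V$ is a projective $H$-comodule.

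I do not expect any real obstacle here: once \eqref{adjoint-for-Hopf-bimodule'} is in hand the argument is purely formal, and the only point meriting comment is the exactness of $U$ and of $R \otimes - \otimes T$, which is immediate over a base field. If one prefers to avoid the abstract slogan, the same conclusions follow from a direct diagram chase through the adjunction isomorphism \eqref{adjoint-for-Hopf-bimodule'}: for (1), lift a morphism $V \to X$ along a monomorphism $V \hookrightarrow W$ in ${\M^H}$ by transporting to ${_R\!\M_T^H}$, using injectivity of $X$ against the induced monomorphism $R \otimes V \otimes T \hookrightarrow R \otimes W \otimes T$, and transporting back; dually for (2).
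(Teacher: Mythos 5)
Your argument is correct and coincides with the paper's: the lemma is stated there as an immediate consequence of the adjunction \eqref{adjoint-for-Hopf-bimodule'} together with the exactness of $R \otimes - \otimes T$ and of the forgetful functor, exactly as you argue. Nothing is missing.
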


To any Hopf–Galois extension $A = B^{co H} \subseteq B$, one associates a $A$-coring $B \otimes H$ and a bialgebroid $L(B, B, H)$, see \cite{Sch1998} or \cite[section 34]{BW2003}.
%
In the remainder of this section, $H$ is assumed to be a Hopf algebra with a bijective antipode, and $R$ and $B$ are considered as right $H$-comodule algebras.
Put
$$L(R, B, H) := (R \otimes B)^{co H} = R \Box_H B^{op}.$$
It is evident that $L(R, B, H)$ forms a subalgebra of $R \otimes B^{op}$.
We fix the formal notation
\[ l = \sum l^1 \otimes l^2 \in R \otimes B^{op} \]
for $l \in L(R, B, H)$, so that $ll' = \sum l^1{l'}^1 \otimes {l'}^2l^2$.

%

In \cite{Sch1998}, Schauenburg extended Schneider's structure theorem for relative Hopf modules to a description of the structure of relative Hopf bimodules.


\begin{thm}\cite[Theorem 3.3]{Sch1998}\label{ff-Hopf-Galois-ext-Hopf-bimod-cat-str-thm}
	Let $H$ be a Hopf algebra and $B$ a right faithfully flat $H$-Galois extension of $A := B^{co H}$.
	Consider a right $H$-comodule algebra $R$, and denote $L:= L(R, B, H)$.
	There exists a category equivalence
	\[ F: {_R\!\M^H_B} \To {_L\!\M}, \;\; M \mapsto M^{co H}, \]
	with quasi-inverse
	\[ G: {_L\!\M} \To {_R\!\M^H_B}, \;\; N \mapsto N \otimes_A B. \]
	The $L$-module structure of $M^{co H}$ is defined by
	\[ l \vtr m = \sum l^1 m l^2;\]
	For $N \in {_L\!\M}$, the action $\vtr$ denotes the module structure, with $N$ being a right $A$-module via $na = (1 \otimes a) \vtr n$.
	The right $B$-module and $H$-comodule structures on $N \otimes_A B$ are induced by those of $B$, while the left $R$-module stucture is given by
	\begin{equation}\label{R-mod-str}
		r(n \otimes_A b) = \sum_{(r)} \big( r_0 \otimes \kappa^1(r_1) \big) \vtr n \otimes_A \kappa^2(r_1) b.
	\end{equation}
\end{thm}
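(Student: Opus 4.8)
The plan is to bootstrap the desired equivalence from the one-sided structure theorem recorded in Theorem~\ref{faithful-flat-Hopf-Galois-extension-thm}. Forgetting left $R$-actions, that theorem already provides mutually quasi-inverse equivalences $(-)^{co H}\colon \M^H_B\to \M_A$ and $-\otimes_A B\colon \M_A\to\M^H_B$, implemented by the natural isomorphisms $\theta_M\colon M^{co H}\otimes_A B\xrightarrow{\sim}M$, $n\otimes_A b\mapsto nb$, and $\eta_N\colon N\xrightarrow{\sim}(N\otimes_A B)^{co H}$, $n\mapsto n\otimes_A 1$. The observation that organizes the rest is that a relative Hopf bimodule extending a given $M\in\M^H_B$ is exactly the datum of an associative, unital, right $B$-linear and $H$-colinear action $R\otimes M\to M$, where $R\otimes M$ carries the right $B$-action through $M$ and the diagonal $H$-coaction; transporting this datum along the equivalence replaces the endofunctor $R\otimes(-)$ on $\M^H_B$ by $Y\mapsto \big(R\otimes(Y\otimes_A B)\big)^{co H}$ on $\M_A$. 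The technical core is to identify this endofunctor, together with its monad structure, with $L\otimes_A(-)$, where $L=(R\otimes B)^{co H}$ is viewed as a subalgebra of $R\otimes B^{op}$ and $Y$ is made a right $A$-module via $ya=(1\otimes a)\vtr y$; once this is done, the equivalence ${_R\M^H_B}\simeq {_L\M}$ follows, and unwinding it along $\theta$ and $\eta$ yields precisely the functors $F$, $G$ and the module structures in the statement, the left $R$-action on $N\otimes_A B$ coming out as in \eqref{R-mod-str}.

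In detail I would proceed as follows. First, $F$ is well defined: for $M\in {_R\M^H_B}$, $l\in L$ and $m\in M^{co H}$ one computes $\rho\big(\sum l^1 m l^2\big)=\sum (l^1)_0 m (l^2)_0\otimes (l^1)_1(l^2)_1=\sum l^1 m l^2\otimes 1$, using coinvariance of $l\in(R\otimes B)^{co H}$ and of $m$, so $M^{co H}$ is an $L$-submodule, with associativity and unitality of $\vtr$ inherited from $R\otimes B^{op}$. Second, $G$ is well defined: one gives $N\otimes_A B$ the right $B$-module and $H$-comodule structures carried by $B$ (so it lies in $\M^H_B$) and the left $R$-action \eqref{R-mod-str}, and then checks, by systematic use of the translation-map identities \eqref{kappa(hk)}, \eqref{akappa(h)=kappa(h)a}, \eqref{kappa^1-kappa^2_0-kappa^2_1} and \eqref{m-kappa=vep} (together with $\kappa(1)=1\otimes_A 1$), that \eqref{R-mod-str} descends to the tensor product over $A$, is associative and unital, commutes with the right $B$-action (immediate, since the $R$-action only affects $n$ and the left factor of $b$), and is colinear for the diagonal coaction; the colinearity, for instance, reduces after one application of \eqref{kappa^1-kappa^2_0-kappa^2_1} to coassociativity in $H$. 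Hence $N\otimes_A B\in {_R\M^H_B}$.

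Third, $F$ and $G$ are quasi-inverse. By Theorem~\ref{faithful-flat-Hopf-Galois-extension-thm} the maps $\theta_M$ and $\eta_N$ are already isomorphisms of the underlying objects, so it remains to check they are $R$-linear, respectively $L$-linear. For $\theta_M$ this is short: with the action \eqref{R-mod-str} on $M^{co H}\otimes_A B$ and $\vtr$ as above,
\[
\theta_M\big(r(n\otimes_A b)\big)=\sum_{(r)} r_0\,n\,\kappa^1(r_1)\kappa^2(r_1)\,b \xlongequal{\eqref{m-kappa=vep}} \sum_{(r)} r_0\,n\,\vep(r_1)\,b = r\,\theta_M(n\otimes_A b).
\]
For $\eta_N$ one must verify $\sum_{(l^1)}\big((l^1)_0\otimes\kappa^1((l^1)_1)\big)\vtr n\otimes_A \kappa^2((l^1)_1)\,l^2=(l\vtr n)\otimes_A 1$ for all $l\in L$ and $n\in N$; this is the most delicate identity, and it is where coinvariance of $l$ is combined with \eqref{kappa^1_0-kappa^2-kappa^1_1} — the translation identity involving the antipode — to make the Sweedler sum telescope. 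Once these points are in place, $F$ and $G$ are mutually quasi-inverse functors between ${_R\M^H_B}$ and ${_L\M}$, which proves the theorem.

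The main obstacle is exactly this $\kappa$-calculus: the descent to $\otimes_A$ and the colinearity of \eqref{R-mod-str} in the second step, and the $L$-linearity of $\eta_N$ in the third. None of these is conceptually deep, but they concentrate all the bookkeeping with the translation map, and the identities carrying the antipode, such as \eqref{kappa^1_0-kappa^2-kappa^1_1}, require particular care with Sweedler indices.
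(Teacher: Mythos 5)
The paper does not prove this theorem: it is imported verbatim from Schauenburg (\cite[Theorem 3.3]{Sch1998}), so there is no internal proof to compare against. Your reconstruction follows what is essentially the standard (and Schauenburg's) route --- bootstrapping the two-sided equivalence from Schneider's one-sided structure theorem (Theorem \ref{faithful-flat-Hopf-Galois-extension-thm}) and letting the translation-map identities do the bookkeeping --- and the computations you display are correct: the coinvariance argument showing $M^{co H}$ is an $L$-submodule, the $R$-linearity of $\theta_M$ via \eqref{m-kappa=vep}, the reduction of colinearity of \eqref{R-mod-str} to coassociativity via \eqref{kappa^1-kappa^2_0-kappa^2_1}, and associativity via \eqref{kappa(hk)} all check out.

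One point is under-articulated and should be promoted from ``bookkeeping'' to an explicit step. For an abstract $N \in {_L\!\M}$, the expression $\big(r_0 \otimes \kappa^1(r_1)\big) \vtr n$ in \eqref{R-mod-str} does not parse term by term: the individual tensors $r_0 \otimes \kappa^1(r_1)$ are elements of $R \otimes B$, not of $L$, and $\vtr$ is only defined for elements of $L$. Before any verification of well-definedness over $\otimes_A$, associativity, or colinearity, you must show that
\[
\sum_{(r)} r_0 \otimes \kappa^1(r_1) \otimes_A \kappa^2(r_1)\, b \;\in\; L \otimes_A B \subseteq (R \otimes B) \otimes_A B,
\]
where flatness of ${_A}B$ is used to regard $L \otimes_A B = (R\otimes B)^{co H}\otimes_A B$ as a subspace of $(R \otimes B) \otimes_A B$; this containment follows from the colinearity of the coaction on $R$ together with \eqref{kappa^1_0-kappa^2-kappa^1_1} (the factor $\sum r_1 S r_2 = \vep(r_1)1$ makes the total coaction trivial). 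Only then is \eqref{R-mod-str} the well-defined composite $N \otimes_A B \to (L\otimes_A B)\otimes_{L\otimes_A B}\cdots \to N \otimes_A B$, $(l\otimes_A b')\cdot(n\otimes_A b) = (l\vtr n)\otimes_A b'b$. This is a repairable omission, not a wrong turn, and the same mechanism (coinvariance of $l$ plus $\beta\circ\kappa = 1\otimes\mathrm{id}_H$) gives the $L$-linearity of $\eta_N$ you flag as the delicate identity: applying $\id_R\otimes\beta$ to $\sum l^1_0\otimes\kappa^1(l^1_1)\otimes_A\kappa^2(l^1_1)l^2$ yields $\sum l^1_0\otimes l^2_0\otimes l^1_1l^2_1 = \sum l^1\otimes l^2\otimes 1$, whence the sum telescopes to $l\otimes_A 1$. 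Finally, the monadic framing in your first paragraph (identifying $Y\mapsto (R\otimes(Y\otimes_A B))^{co H}$ with $L\otimes_A -$) is never actually carried out and is not needed, since your second and third steps are a self-contained direct verification; I would either drop it or note explicitly that it is motivation only.
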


%
%

\subsection{Some homological properties of $L(R, B, H)$}

In this subsection, we investigate the homological characteristics of the algebra $L := L(R, B, H)$ arising from an $H$-comodule algebra $R$ and a faithfully flat $H$-Galois extension $A=B^{co H} \subseteq B$.

\begin{lem}\label{L_A-proj}
	The algebra $L$ is projective as a right $A$-module.
\end{lem}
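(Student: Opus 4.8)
The plan is to combine the structure theorem \ref{ff-Hopf-Galois-ext-Hopf-bimod-cat-str-thm} with faithfully flat descent of projectivity. First I would regard $R \otimes B$ as an object of ${_R\!\M^H_B}$ by letting $R$ act on the left on the first tensorand, $B$ act on the right on the second, and equipping it with the codiagonal $H$-coaction, so that $(R \otimes B)^{co H} = R \Box_H B^{op} = L$ by the very definition of $L$. Unwinding the formulas of Theorem \ref{ff-Hopf-Galois-ext-Hopf-bimod-cat-str-thm}, one checks that the left $L$-action $l \vtr m = \sum l^1 m l^2$ on $(R \otimes B)^{co H}$ is just left multiplication in $L$, and that the induced right $A$-module structure $n a = (1 \otimes a) \vtr n$ is precisely the restriction along $A \hookrightarrow B$ of the right $B$-action on $R \otimes B$. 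Hence, under the equivalence $F \colon {_R\!\M^H_B} \xrightarrow{\ \sim\ } {_L\!\M}$ of that theorem, the object $R \otimes B$ corresponds to the regular left $L$-module equipped with its canonical right $A$-module structure.

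Next I would apply the quasi-inverse $G \colon N \mapsto N \otimes_A B$ to obtain an isomorphism $L \otimes_A B \cong R \otimes B$ in ${_R\!\M^H_B}$, in particular an isomorphism of right $B$-modules. Fixing a $\kk$-basis $\{ r_i \}_{i \in I}$ of $R$, we have $R \otimes B = \bigoplus_{i \in I} r_i \otimes B \cong B^{(I)}$ as right $B$-modules, so $L \otimes_A B$ is a free right $B$-module.

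Finally, by Theorem \ref{faithful-flat-Hopf-Galois-extension-thm} the extension $A \subseteq B$ is faithfully flat, so $B$ is faithfully flat as a left $A$-module. Since $L \otimes_A B$ is free, hence projective, over $B$, faithfully flat descent of projectivity gives that $L$ is projective as a right $A$-module. I expect this last step to be the only real obstacle: it relies on the fact that projectivity of modules descends along faithfully flat ring extensions, which in the commutative case is due to Raynaud--Gruson and whose argument via the Mittag-Leffler criterion carries over here. When $R$ is finite dimensional the descent is unnecessary, since then $L$ is finitely presented over $A$ and flatness --- immediate from the above --- already forces projectivity.
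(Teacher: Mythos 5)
Your argument is correct, but it takes a genuinely different route from the paper. The paper's proof is a one-step splitting argument: by Schauenburg--Schneider (\cite[Theorem 5.6]{SS2005}), a faithfully flat $H$-Galois extension admits a right $H$-colinear, right $A$-linear splitting of the multiplication $B \otimes A \to B$; tensoring with $R$ and applying $(-)^{co H}$ exhibits $L = (R \otimes B)^{co H}$ as a direct summand of $(R \otimes B \otimes A)^{co H} \cong L \otimes A$, a free right $A$-module. Your route instead passes through the structure theorem: your identification of $F(R \otimes B)$ with the regular module ${}_LL$ (and of its induced right $A$-action with the restriction of the right $B$-action on the second tensorand) is correct, as is the resulting isomorphism $L \otimes_A B \cong R \otimes B$, which is free over $B$. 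The cost is the final step: faithfully flat descent of projectivity over noncommutative rings is a genuinely deep theorem (Raynaud--Gruson in the commutative case; the noncommutative case was settled by Perry, ``Faithfully flat descent for projectivity of modules''), and ``the Mittag-Leffler argument carries over'' is not something one can wave at --- the dévissage into countably generated submodules and the descent of the Mittag-Leffler condition both require care, so this step needs a citation rather than an assertion. Granting that reference, your proof is complete, and it has the side benefit of producing the explicit isomorphism $L \otimes_A B \cong R \otimes B$; the paper's proof is more elementary and yields the sharper structural statement that $L$ is a direct summand of $L \otimes A$. Your closing remark for finite-dimensional $R$ is also fine, though it still quietly uses descent of finite presentation and of flatness along $A \subseteq B$.
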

\begin{proof}
	Since $A \subseteq B$ is a faithfully flat $H$-Galois extension, there exists a right $H$-colinear and right $A$-linear splitting of the multiplication $B \otimes A \to B$ by \cite[Theorem 5.6]{SS2005}.
	Therefore, $L = (R \otimes B)^{co H}$ is a direct summand of $R \otimes A \otimes A_A = (R \otimes B \otimes A)^{co H}$ as a right $A$-module, implying that $L$ is a projective $A$-module.
\end{proof}

For any $M \in {_R\!\M_B}$, the left $L$-module structure of $M$ is obtained by restriction of scalars through the natural embedding map $L \hookrightarrow R \otimes B^{op}$.
As a result, $M \otimes_A B$ is identified as a Hopf bimodule within the category ${_R\!\M_B^H}$, as rigorously defined in Theorem \ref{ff-Hopf-Galois-ext-Hopf-bimod-cat-str-thm}.

\begin{lem}\label{M-otimes_A-B-cong-M-otimes-H}
	Let $M$ be a $R$-$B$-bimodule. Then $M \otimes_A B \cong M \otimes H$ in ${_R\!\M_B^H}$, where the Hopf module structure on $M \otimes H$ is given by \eqref{Hopf-bimodule-str-on-M-otimes-H}.
\end{lem}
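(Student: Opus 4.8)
The plan is to construct an explicit isomorphism in $_R\!\M_B^H$ between $M \otimes_A B$ and $M \otimes H$, using the translation map $\kappa$ together with the Galois map $\beta$. First I would recall that, by the faithfully flat Galois hypothesis, the canonical map $\beta: B \otimes_A B \to B \otimes H$ is bijective, and that tensoring a $B$-$B$-bimodule isomorphism on the left over $B$ with $M$ yields a corresponding map on $M \otimes_A B$. Concretely, I would consider the composite
\[
M \otimes_A B \xrightarrow{\ \cong\ } M \otimes_B (B \otimes_A B) \xrightarrow{\ \id_M \otimes \beta\ } M \otimes_B (B \otimes H) \xrightarrow{\ \cong\ } M \otimes H,
\]
which on elements sends $m \otimes_A b \mapsto \sum_{(b)} m b_0 \otimes b_1$, exactly as in the analogous computation appearing in the proof of Lemma~\ref{(-)^A-free}. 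The inverse is given by $m \otimes h \mapsto \sum m \kappa^1(h) \otimes_A \kappa^2(h)$, using that $\beta^{-1}(1 \otimes h) = \kappa(h)$ and that $\beta^{-1}(b \otimes h) = \sum b\kappa^1(h) \otimes_A \kappa^2(h)$.

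Next I would check that this map is a morphism in $_R\!\M_B^H$, i.e. that it is right $B$-linear, right $H$-colinear, and left $R$-linear for the structures specified. Right $B$-linearity and $H$-colinearity are immediate from the formula $m \otimes_A b \mapsto \sum m b_0 \otimes b_1$ once one recalls that $\rho_B$ is an algebra map and the coaction on $M \otimes H$ is $\id \otimes \Delta$ as in \eqref{Hopf-bimodule-str-on-M-otimes-H}; these verifications mirror those already carried out for $\widetilde{\iota}$ in the proof of Lemma~\ref{(-)^A-free} and for $\widetilde{\iota}$ in Lemma~\ref{Hopf-bimod-cat-is-Grothendieck}(1). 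The left $R$-linearity is the substantive point: on $M \otimes_A B$ the $R$-action is the genuine bimodule action $r(m \otimes_A b) = (rm) \otimes_A b$ (since $L \hookrightarrow R \otimes B^{op}$ acts by $l \vtr(m \otimes_A b)$ and the relevant structure on $M \otimes_A B$ as an object of $_R\!\M_B^H$ restricts from the $R$-$B$-bimodule $M$), whereas on $M \otimes H$ the action from \eqref{Hopf-bimodule-str-on-M-otimes-H} is $r(m \otimes h) = \sum_{(r)} r_0 m \otimes r_1 h$; so I must verify $\sum_{(b)} (rm) b_0 \otimes b_1 = \sum_{(r),(b)} r_0(m b_0) \otimes r_1 b_1$ after transporting, which reduces to the comodule-algebra compatibility $\rho_B(rb)$... but here one must instead use that the $R$-coaction does not act on $M$ directly — rather this follows because the isomorphism is built from $B$-bimodule maps and the $R$-action on everything in sight factors through $M$ alone on the $M \otimes_A B$ side. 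The cleanest route is to observe that all three arrows in the composite above are visibly left $R$-linear when $R$ acts only on the $M$-factor on the source and via \eqref{Hopf-bimodule-str-on-M-otimes-H} on the target — indeed $\id_M \otimes \beta$ is $R$-linear since $\beta$ is a morphism of $B$-bimodules and $R$ acts through $M$, and the outer isomorphisms $M \otimes_A B \cong M \otimes_B(B \otimes_A B)$ and $M \otimes_B(B \otimes H) \cong M \otimes H$ are manifestly compatible with the respective $R$-actions.

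The main obstacle I anticipate is precisely pinning down the left $R$-module structures on both sides and confirming they match under the transport of structure; the danger is a mismatch between the "restriction along $L \hookrightarrow R \otimes B^{op}$" description and the "genuine $R$-$B$-bimodule" description of $M \otimes_A B$ as an object of $_R\!\M_B^H$ — one must check these agree, which should follow from the defining formula \eqref{R-mod-str} specialized to the case $N = M$ with its $L$-action restricted from $R \otimes B^{op}$, combined with identities \eqref{kappa(hk)}--\eqref{m-kappa=vep} for $\kappa$. Granting that bookkeeping, bijectivity is already established since $\beta$ is bijective by hypothesis, so $\id_M \otimes \beta$ and the two flanking canonical isomorphisms are all bijective, and the proof concludes. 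I would end by noting that this lemma, combined with Lemma~\ref{inj-Hopf-bimodule-is-inj} and Lemma~\ref{proj-Hopf-bimodule-is-proj}, is what allows one to transfer injectivity/projectivity statements between $_L\!\M$ and bimodule categories in the subsequent arguments.
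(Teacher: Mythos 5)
Your map is the right one --- it is exactly the composite
\[
M \otimes_A B \stackrel{\cong}{\To} M \otimes_B (B \otimes_A B) \stackrel{\id_M \otimes \beta}{\To} M \otimes_B (B \otimes H) \stackrel{\cong}{\To} M \otimes H, \qquad m \otimes_A b \mapsto \sum\nolimits_{(b)} mb_0 \otimes b_1,
\]
used in the paper, and bijectivity and the right $B$-module/$H$-comodule compatibilities are indeed routine. But your treatment of left $R$-linearity, which you correctly flag as ``the substantive point,'' contains a genuine error. The left $R$-module structure on $M \otimes_A B$ as an object of ${_R\!\M_B^H}$ is \emph{not} the ``genuine bimodule action'' $r(m \otimes_A b) = (rm) \otimes_A b$; it is the $\kappa$-twisted action coming from \eqref{R-mod-str}, namely
\[
r(m \otimes_A b) = \sum_{(r)} r_0\, m\, \kappa^1(r_1) \otimes_A \kappa^2(r_1)\, b .
\]
With the untwisted action your map simply fails to be $R$-linear: taking $b = 1$ gives $\varphi(rm \otimes_A 1) = rm \otimes 1_H$, whereas $r \cdot \varphi(m \otimes_A 1) = \sum_{(r)} r_0 m \otimes r_1$, and these differ unless $r$ is $H$-coinvariant. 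For the same reason your proposed ``cleanest route'' --- that all three arrows are ``visibly left $R$-linear when $R$ acts only on the $M$-factor on the source'' --- cannot work; no transport of structure rescues it, because the two actions you are comparing are genuinely different modules.

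The actual content of the lemma is precisely the computation you defer as ``bookkeeping'': starting from the $\kappa$-twisted action one checks
\[
\varphi\bigl(r(m \otimes_A b)\bigr) = \sum r_0 m \kappa^1(r_1)\kappa^2(r_1)_0 b_0 \otimes \kappa^2(r_1)_1 b_1 = \sum r_0 m \kappa^1(r_1)\kappa^2(r_1) b_0 \otimes r_2 b_1 = \sum r_0 m b_0 \otimes r_1 b_1 = r\,\varphi(m \otimes_A b),
\]
where the middle equalities use \eqref{kappa^1-kappa^2_0-kappa^2_1} and \eqref{m-kappa=vep}. Your last paragraph does point at the correct ingredients (formula \eqref{R-mod-str} plus the $\kappa$-identities), but since you never carry out this verification and elsewhere assert an incorrect identification of the action together with an invalid shortcut, the proof as written has a gap at its central step.
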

\begin{proof}
	Utilizing the bijectivity of the Galois map $\beta: B \otimes_A B \to B \otimes H$, the compositon map
	\[ \varphi: M \otimes_A B \stackrel{\cong}{\To} M \otimes_B B \otimes_A B \stackrel{\id_M \otimes \beta}{\To} M \otimes H, \quad m \otimes b \mapsto m \otimes 1_B \otimes b \mapsto \sum\limits_{(b)} mb_0 \otimes b_1 \]
	is also bijective.
	To establish that $\varphi$ is a morphism in ${_R\!\M_B^H}$, we recall that $M \otimes_A B$ is a left $R$-module via
	\begin{equation}\label{R-mod-str-on-M-otimes-B}
		r(m \otimes_A b) \xlongequal{\eqref{R-mod-str}} \sum_{(r)} \big( r_0 \otimes \kappa^1(r_1) \big) \vtr m \otimes_A \kappa^2(r_1) b = \sum_{(r)}  r_0m\kappa^1(r_1) \otimes_A \kappa^2(r_1) b. 
	\end{equation}
	For any $r \in R$, $m \in M$ and $b \in B$, we obtain that $\varphi\big( r(m \otimes_A b) \big)$ is equal to
	\begin{align*}
		 = & \varphi\big( \sum_{(r)} r_0m\kappa^1(r_1) \otimes_A \kappa^2(r_1) b \big) \\
		= & \sum_{(r), (\kappa^2(r_1)), (b)} r_0m\kappa^1(r_1)\kappa^2(r_1)_0 b_0 \otimes \kappa^2(r_1)_1 b_1 \\
		\xlongequal{\eqref{kappa^1-kappa^2_0-kappa^2_1}} & \sum_{(r), (b)} r_0m\kappa^1(r_1)\kappa^2(r_1) b_0 \otimes r_2 b_1 \\
		\xlongequal{\eqref{m-kappa=vep}} & \sum_{(r), (b)} r_0m b_0 \otimes r_1 b_1 \\
		\xlongequal{\eqref{Hopf-bimodule-str-on-M-otimes-H}} & \sum_{(r)} r\varphi(m \otimes b).
	\end{align*}
	This verifies that $\varphi$ is a left $R$-module morphism.
	Furthermore, it is straightforward to verify that $\varphi$ also preserves $B$-module and $H$-comodule structures.
	Hence $M \otimes_A B$ is isomorphic to $M \otimes H$ as Hopf modules in ${_R\!\M_B^H}$.
\end{proof}

\begin{lem}\label{Ext_L-Ext_R-otimes-B}
	For any left $L$-module $V$, $R \otimes B^{op}$-module $M$ and $i \in \N$, 
	\[ \Ext^i_{L}(V, M) \cong \Ext^i_{R \otimes B^{op}}(V \otimes_A B, M). \]
	In particular, if $M = R \otimes B$, then $\Ext^i_{L}(V, R \otimes B) \cong \Ext^i_{R \otimes B^{op}}(V \otimes_A B, R \otimes B)$ as $B$-$R$-bimodules.
\end{lem}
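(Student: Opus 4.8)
The plan is to exploit the category equivalence $F: {_R\!\M^H_B} \to {_L\!\M}$ of Theorem \ref{ff-Hopf-Galois-ext-Hopf-bimod-cat-str-thm} with quasi-inverse $G(N) = N \otimes_A B$, together with the structural results of Lemmas \ref{proj-Hopf-bimodule-is-proj}, \ref{Hopf-bimod-cat-is-Grothendieck}, and \ref{M-otimes_A-B-cong-M-otimes-H}. The key observation is that both sides of the claimed isomorphism compute derived functors of the same underlying Hom-functor, once one resolves $V$ appropriately. Concretely, I would first take a projective resolution $Q_\bullet \to V$ in ${_L\!\M}$. Applying the exact equivalence $G$ gives an exact complex $Q_\bullet \otimes_A B \to V \otimes_A B$ in ${_R\!\M^H_B}$ in which each $Q_i \otimes_A B$ is a projective object of ${_R\!\M^H_B}$; by Lemma \ref{proj-Hopf-bimodule-is-proj}(1) each $Q_i \otimes_A B$ is then projective as an $R \otimes B^{op}$-module. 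So $Q_\bullet \otimes_A B$ is simultaneously a projective resolution of $V \otimes_A B$ in ${_R\!\M^H_B}$ and (since $-\otimes_A B$ on a projective $L$-module stays projective over $A$, hence the forgetful functor lands in projectives) a complex of projective $R \otimes B^{op}$-modules resolving $V \otimes_A B$.

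The second step is to identify the Hom-complexes. For each $i$, I would show
\[ \Hom_L(Q_i, M) \cong \Hom_{R \otimes B^{op}}(Q_i \otimes_A B, M) \]
naturally in $M \in {_R\!\M^{\phantom{H}}_B} = \M_{R\otimes B^{op}}$. This is the heart of the matter: it follows from the adjunction between $G = -\otimes_A B$ and $F = (-)^{co H}$ restricted to ${_R\!\M_B}$-valued functors, combined with the fact that for an $R\otimes B^{op}$-module $M$ one has $F(M\otimes H) = (M \otimes H)^{co H} \cong M$ as $L$-modules (via Lemma \ref{M-otimes_A-B-cong-M-otimes-H} applied to $M$, or directly), so that $\Hom_{R\otimes B^{op}}(Q_i \otimes_A B, M)$ is computed by first embedding $M$ into $M \otimes H$ inside ${_R\!\M^H_B}$ and using that $G$ is fully faithful. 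More cleanly: since $G$ is fully faithful, $\Hom_{{_R\!\M^H_B}}(Q_i \otimes_A B, X) \cong \Hom_L(Q_i, X^{co H})$ for $X \in {_R\!\M^H_B}$; taking $X = M \otimes H$ and using $(M\otimes H)^{co H} \cong M$ together with adjunction \eqref{adjoint-for-Hopf-bimodule} to rewrite $\Hom_{{_R\!\M^H_B}}(Q_i\otimes_A B, M \otimes H) \cong \Hom_{R\otimes B^{op}}(Q_i \otimes_A B, M)$ yields exactly the displayed isomorphism.

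Taking homology of these identified complexes then gives $\Ext^i_L(V, M) \cong \Ext^i_{R\otimes B^{op}}(V \otimes_A B, M)$, since $Q_\bullet$ computes the left-hand Ext and $Q_\bullet \otimes_A B$ (a complex of projective $R\otimes B^{op}$-modules) computes the right-hand Ext. For the final assertion, taking $M = R \otimes B$, the bimodule refinement is automatic: $R \otimes B$ carries an extra commuting right action on the $R$-factor (a left $R^{op}$-action) and an extra commuting left action on the $B$-factor (a right $B^{op}$-action), i.e. a $B$-$R$-bimodule structure; all the maps above are natural in $M$, hence $R^{op} \otimes B^{op}$-linear for the outer action, so the isomorphism of $\Ext$ groups respects the induced $B$-$R$-bimodule structure. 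The main obstacle I anticipate is the second step — verifying carefully that the adjunction isomorphism $\Hom_L(Q_i, M) \cong \Hom_{R\otimes B^{op}}(Q_i \otimes_A B, M)$ is genuinely natural in both variables and that $Q_i \otimes_A B$ really is a projective $R\otimes B^{op}$-module (which rests on Lemma \ref{proj-Hopf-bimodule-is-proj}(1) applied to a projective object of ${_R\!\M^H_B}$, so one must first check that $G$ sends projectives to projectives, using that $G$ has the exact right adjoint $F$). Everything else is bookkeeping with Sweedler notation and the translation map identities.
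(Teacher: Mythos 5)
Your proposal is correct and follows essentially the same route as the paper's proof: a projective resolution of $V$ in ${_L\!\M}$ is transported through the equivalence $- \otimes_A B$ to a projective resolution of $V \otimes_A B$ over $R \otimes B^{op}$ (using Lemma \ref{proj-Hopf-bimodule-is-proj}), and the Hom-complexes are identified via Lemma \ref{M-otimes_A-B-cong-M-otimes-H} together with the adjunction \eqref{adjoint-for-Hopf-bimodule}, after which one takes homology and appeals to naturality for the bimodule statement. The only cosmetic difference is that you phrase the key identification through $(M \otimes H)^{co H} \cong M$ and full faithfulness of $G$, whereas the paper writes the equivalent chain $\Hom_{L}(P_{\bullet}, M) \cong \Hom_{_R\!\M_B^H}(P_{\bullet} \otimes_A B, M \otimes_A B) \cong \Hom_{_R\!\M_B^H}(P_{\bullet} \otimes_A B, M \otimes H) \cong \Hom_{R \otimes B^{op}}(P_{\bullet} \otimes_A B, M)$.
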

\begin{proof}
	Let $P_{\bullet}$ be a projective resolution of the left $L$-module $V$.
	Given that $- \otimes_A B: {_L\!\M} \to {_R\!\M_B^H}$ constitutes an equivalence of categories, it follows that each $P_{i} \otimes_A B$ is a projective object in ${_R\!\M_B^H}$ and also a projective $R \otimes B^{op}$-module, as established by Lemma \ref{proj-Hopf-bimodule-is-proj}.
	Due to the flatness of $_AB$, the sequence
	\[ \cdots \To P_1 \otimes_A B \To P_0 \otimes_A B \To V \otimes_A B \To 0 \]
	is exact, making $P_{\bullet} \otimes_A B$ a projective resolution of the left ${R \otimes B^{op}}$-module $V \otimes_A B$.
	By Lemma \ref{M-otimes_A-B-cong-M-otimes-H} and the adjoint isomorphism \eqref{adjoint-for-Hopf-bimodule}, we have the following isomorphisms,
	\begin{align*}
		\Hom_{L}(P_{\bullet}, M) \cong & \Hom_{_R\!\M_B^H}(P_{\bullet} \otimes_A B, M \otimes_A B) \\
		\cong & \Hom_{_R\!\M_B^H}(P_{\bullet} \otimes_A B, M \otimes H) \\
		\cong & \Hom_{R \otimes B^{op}}(P_{\bullet} \otimes_A B, M).
	\end{align*}
	Upon taking homologies, we derive the isomorphism
	\[ \Ext^i_{L}(V, M) \cong \Ext^i_{R \otimes B^{op}}(V \otimes_A B, M)\]
	for all $i$.
	The subsequent claim is implied by the naturality of this isomorphism.
\end{proof}

\begin{lem}\label{proj-Hopf-bimod-lem}
	If $A \subseteq B$ be a faithfully flat $H$-Galois extension, then $R \otimes V \otimes B$ is a projective object in ${_R\!\M_B^H}$ for any $H$-comodule $V$.
\end{lem}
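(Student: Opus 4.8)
The statement to prove is Lemma~\ref{proj-Hopf-bimod-lem}: if $A \subseteq B$ is a faithfully flat $H$-Galois extension and $V$ is any $H$-comodule, then $R \otimes V \otimes B$ is a projective object in ${_R\!\M_B^H}$. The natural strategy is to reduce to the case $V = H$ (the free $H$-comodule), for which we already have the tools: by Lemma~\ref{M-otimes_A-B-cong-M-otimes-H}, $R \otimes H \cong (R \otimes_A B$ with $R$ regarded as an $R$-$B$-bimodule via... $)$ — more precisely, taking $M = R \otimes A = R$ as an $R$-$B$-bimodule won't quite work since $R$ is only an $H$-comodule algebra, so instead I would use the chain: $R \otimes V \otimes B$ with $V$ free, say $V = H^{(\Lambda)}$ a free $H$-comodule, is a direct sum of copies of $R \otimes H \otimes B$, and $R \otimes H \otimes B \cong (R\otimes B) \otimes_A B$ in ${_R\!\M_B^H}$ by Lemma~\ref{M-otimes_A-B-cong-M-otimes-H} applied to the free $R$-$B$-bimodule $M = R \otimes B$. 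Since $R \otimes B$ is a projective (free) $R \otimes B^{op}$-module and $- \otimes_A B : {_R\!\M_B} \to {_R\!\M_B^H}$ sends projectives to projectives (it is left adjoint... actually here one uses that $(R\otimes B)\otimes_A B$, being the image under the equivalence $G$ of Theorem~\ref{ff-Hopf-Galois-ext-Hopf-bimod-cat-str-thm} of the $L$-module $(R\otimes B)\otimes_A B$'s preimage, or more simply by Lemma~\ref{Ext_L-Ext_R-otimes-B}), one concludes $R \otimes H \otimes B$ is projective in ${_R\!\M_B^H}$, hence so is any $R \otimes V \otimes B$ with $V$ a free $H$-comodule.

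\smallskip

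For the general case, every $H$-comodule $V$ embeds into a free (equivalently, cofree/injective in the appropriate sense is not what we need) — rather, every $H$-comodule is a \emph{quotient}... no: comodules do not in general surject from free comodules. The correct statement is that every $H$-comodule $V$ is a \emph{subcomodule} of a cofree comodule $W \otimes H$ where $W = V$ as a vector space; this is not what gives projectivity. Instead I would argue directly: $V$ is a directed union of its finite-dimensional subcomodules, and... this still does not immediately give that $R\otimes V\otimes B$ is projective. The cleaner route, which I expect the author to use, is Lemma~\ref{inj-Hopf-bimodule-is-inj}(2): it already asserts that $R \otimes V \otimes B$ is a projective object in ${_R\!\M_B^H}$ \emph{whenever $V$ is a projective $H$-comodule}. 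So the real content of Lemma~\ref{proj-Hopf-bimod-lem} must be that \emph{every} $H$-comodule is a projective object in $\M^H$ — but that is false in general. Re-reading: the point is that over a field $\kk$, $\M^H$ is the category of comodules, and the forgetful functor $\M^H \to \mathrm{Vec}_\kk$ has a right adjoint $- \otimes H$; a comodule $V$ is injective iff it is a summand of some $W \otimes H$, and every comodule embeds in $V \otimes H$. Projective objects in $\M^H$ are more delicate. Thus the lemma as literally stated needs $H$ to be such that every comodule is a projective object, which holds e.g. when $H$ is cosemisimple — but no such hypothesis appears.

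\smallskip

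Given this tension, the most plausible reading is that the intended proof invokes Lemma~\ref{inj-Hopf-bimodule-is-inj}(2) together with the observation that, \emph{via the equivalence $F: {_R\!\M_B^H} \to {_L\!\M}$} of Theorem~\ref{ff-Hopf-Galois-ext-Hopf-bimod-cat-str-thm}, the object $R \otimes V \otimes B$ corresponds to $(R \otimes V \otimes B)^{co H}$, and one shows this $L$-module is projective by exhibiting it as a direct summand of a free $L$-module. Concretely: $(R \otimes V \otimes B)^{co H} \cong (R \otimes B)^{co H} \otimes V = L \otimes V$ as $L$-modules when the $H$-comodule structure on $V$ is suitably ``split off'' — and since $V$ is a $\kk$-vector space, $L \otimes V$ is a free $L$-module on a basis of $V$. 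So the key computation is the identification $(R \otimes V \otimes B)^{co H} \cong L \otimes V$ of left $L$-modules, which follows because the $H$-coaction on $R \otimes V \otimes B$ given by \eqref{Hopf-bimodule-str-on-M-otimes-H'} is "diagonal" in the $V$-slot and one can use the fundamental theorem of comodules (the coaction-twist $V \otimes H \to V \otimes H$, $v \otimes h \mapsto \sum v_0 \otimes v_1 h$, is a comodule isomorphism) to trivialize it. Then projectivity of $R \otimes V \otimes B$ in ${_R\!\M_B^H}$ follows from projectivity of $L \otimes V$ in ${_L\!\M}$ under the equivalence $G$.

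\smallskip

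\textbf{Main obstacle.} The crux is establishing $(R \otimes V \otimes B)^{co H} \cong L \otimes V$ as left $L$-modules — equivalently, verifying that the "linearization" isomorphism $R \otimes V \otimes B \cong (R\otimes B) \otimes V$ (with $V$ now carrying the trivial coaction on the right-hand side, placed so that $(R\otimes B)\otimes V$ means a $V$-indexed direct sum of the free Hopf bimodule $R \otimes B \otimes_A B \cong R \otimes H \otimes B$) is a morphism in ${_R\!\M_B^H}$. This requires the cotensor/coaction bookkeeping of \eqref{Hopf-bimodule-str-on-M-otimes-H'} to match up with the $R$-module structure \eqref{R-mod-str-on-M-otimes-B} after applying $\beta$ and the translation-map identities \eqref{kappa^1-kappa^2_0-kappa^2_1} and \eqref{m-kappa=vep}, exactly as in the proof of Lemma~\ref{M-otimes_A-B-cong-M-otimes-H}; the novelty is only in carrying the extra $V$-slot through. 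Once that naturality check is done, projectivity is immediate from Lemma~\ref{proj-Hopf-bimodule-is-proj}, Lemma~\ref{Ext_L-Ext_R-otimes-B}, and the freeness of $L \otimes V$ over $L$. I would therefore organize the proof as: (i) reduce to showing $R \otimes H \otimes B$ is projective in ${_R\!\M_B^H}$; (ii) identify $R \otimes H \otimes B \cong (R \otimes B) \otimes_A B$ via Lemma~\ref{M-otimes_A-B-cong-M-otimes-H}; (iii) conclude via Lemma~\ref{Ext_L-Ext_R-otimes-B} (with $V$ there being the free $L$-module $(R\otimes B)\otimes_A B$'s preimage $L$) that this is $\Hom$-exact, hence projective; (iv) take direct sums and summands for general $V$.
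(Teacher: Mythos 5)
There is a genuine gap. You correctly diagnose that the naive reduction fails (a general $H$-comodule $V$ is neither a quotient nor a direct summand of a free comodule, and no projectivity hypothesis on $V$ is available), but the two escape routes you then propose do not close the gap. The final plan (i)--(iv) ends with ``take direct sums and summands for general $V$,'' which is exactly the step you yourself identified as unavailable. The intermediate route, identifying $(R \otimes V \otimes B)^{co H} \cong L \otimes V$ as a \emph{free} $L$-module by ``untwisting'' the diagonal coaction in the $V$-slot, requires absorbing the coaction of $V$ into $B$ via a convolution-invertible colinear map $H \to B$, i.e.\ a cleaving map; a faithfully flat Hopf Galois extension need not be cleft, so this isomorphism fails in general. (It is also stronger than what is true: Lemma \ref{B-otimes-V-otimes-B-fg-proj} only asserts that $(B \otimes V \otimes B)^{co H}$ is \emph{projective} over $L$, and in fact that lemma is deduced \emph{from} the present one, so arguing in the other direction would be circular in spirit.)

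The idea you are missing is where the faithful flatness actually enters: by Theorem \ref{faithful-flat-Hopf-Galois-extension-thm}, $B$ is an injective right $H$-comodule, and then Lemma \ref{Hopf-module-injective} upgrades this to the statement that \emph{every} object of $\M^H_B$ --- hence every object of ${_R\!\M^H_B}$ --- is injective as an $H$-comodule. Consequently every short exact sequence in ${_R\!\M^H_B}$ splits as a sequence of $H$-comodules, so the functor $\Hom_{\M^H}(V, -)$ carries it to a short exact sequence for an \emph{arbitrary} comodule $V$, with no projectivity of $V$ needed. The adjunction \eqref{adjoint-for-Hopf-bimodule'} identifies $\Hom_{_R\!\M^H_B}(R \otimes V \otimes B, -)$ with $\Hom_{\M^H}(V, -)$, and exactness of the latter on short exact sequences of Hopf bimodules is precisely projectivity of $R \otimes V \otimes B$ in ${_R\!\M^H_B}$. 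This is the paper's proof; it needs none of the machinery of Lemmas \ref{M-otimes_A-B-cong-M-otimes-H} or \ref{Ext_L-Ext_R-otimes-B} that you invoke.
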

\begin{proof}
	Since $B$ is an injective $H$-comodule by Theorem \ref{faithful-flat-Hopf-Galois-extension-thm}, Lemma \ref{Hopf-module-injective} implies that all objects in ${\M_B^H}$ are injective $H$-comodules.
	For any Hopf module $X$ in ${_R\!\M_B^H}$, since $X$ is also an object in ${\M_B^H}$, $X$ is injective as an $H$-comodule.
	It follows that any short exact sequence
	\[ 0 \To X' \To X \To X'' \To 0 \]
	in ${_R\!\M_B^H}$ splits as a sequence of $H$-comodules.
	Therefore, the induced sequence
	\[ 0 \To \Hom_{\M^H}(V, X') \To \Hom_{\M^H}(V, X) \To \Hom_{\M^H}(V, X'') \To 0 \]
	is also exact.
	It follows from \eqref{adjoint-for-Hopf-bimodule'} that $R \otimes V \otimes B$ is a projective object in ${_R\!\M_B^H}$.
\end{proof}

\begin{prop}\label{R-otimes-B-is-L-proj-generator}
	Keep the natation as above. Then the following statements hold. \\
	(1) $R \otimes B$ is a projective $L$-module. \\
	(2) $L$ is a direct summand of $R \otimes B$ as an $L$-module.
\end{prop}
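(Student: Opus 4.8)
The plan is to realize $R \otimes B$ as $V \otimes_A B$ for a suitable $L$-module $V$, namely $V = R \otimes H \otimes B$ with the obvious structures, and then invoke the homological machinery developed above. Concretely, I would start from the observation that under the equivalence $G: {_L\!\M} \to {_R\!\M^H_B}$ of Theorem \ref{ff-Hopf-Galois-ext-Hopf-bimod-cat-str-thm}, projective $L$-modules correspond exactly to projective objects of ${_R\!\M^H_B}$; so to prove (1) it suffices to exhibit $R \otimes B \otimes_A B$ — or rather the $L$-module whose image under $G$ is a projective object of ${_R\!\M^H_B}$ isomorphic (as an $L$-module, i.e. after applying $F = (-)^{co H}$) to $R \otimes B$. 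The natural candidate is the projective object $R \otimes H \otimes B$ of ${_R\!\M^H_B}$ supplied by Lemma \ref{proj-Hopf-bimod-lem} (take $V = H$, which is a projective, indeed free, $H$-comodule).

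The key computation is then to identify $F(R \otimes H \otimes B) = (R \otimes H \otimes B)^{co H}$ as a left $L$-module and show it is isomorphic to $R \otimes B$. Here the $H$-comodule structure on $R \otimes H \otimes B$ from \eqref{Hopf-bimodule-str-on-M-otimes-H'} is the diagonal one on all three tensor factors, and taking coinvariants of $R \otimes H \otimes B$ against the diagonal coaction, using that the coaction on the middle factor $H$ is the comultiplication, should collapse the $H$ factor: the standard "unit/counit" trick (the map $r \otimes h \otimes b \mapsto \sum r_0 \otimes \kappa^1(h)? \cdots$, or more simply the isomorphism $H \Box_H B^{op} \cong B^{op}$ type identity) gives $(R \otimes H \otimes B)^{co H} \cong R \otimes B$ as vector spaces, and one checks this respects the $L$-action $l \vtr m = \sum l^1 m l^2$. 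Combined with Lemma \ref{proj-Hopf-bimod-lem}, which says $R \otimes H \otimes B$ is projective in ${_R\!\M^H_B}$, and the fact that $F$ carries projectives to projectives (being an equivalence), this yields that $R \otimes B$ is a projective $L$-module, proving (1). Alternatively, and perhaps more cleanly, I would use Lemma \ref{Ext_L-Ext_R-otimes-B}: one shows $(R\otimes B) \otimes_A B \cong R \otimes B \otimes H$ in ${_R\!\M^H_B}$ via Lemma \ref{M-otimes_A-B-cong-M-otimes-H} (with $M = R \otimes B$), hence is a projective $R \otimes B^{op}$-module by Lemma \ref{proj-Hopf-bimodule-is-proj}(1) after noting $R\otimes B\otimes H$ is projective in ${_R\!\M^H_B}$; then $\Ext^{\geq 1}_L(R\otimes B, -)$ vanishes by the Ext-isomorphism of Lemma \ref{Ext_L-Ext_R-otimes-B}.

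For (2): the unit $1_R \otimes 1_B \in R \otimes B$ is $H$-coinvariant and is fixed suitably so that $L \to R \otimes B$, $l \mapsto l \vtr 1 = \sum l^1 l^2$ — i.e. the multiplication map $L \hookrightarrow R \otimes B^{op} \xrightarrow{\mathrm{mult}} $... — wait, more carefully: $L$ embeds in $R \otimes B^{op}$, and $R \otimes B$ is an $L$-module by restriction, with $L \cdot (1 \otimes 1)$ recovering $\{\sum l^1 \otimes l^2\} = L \subseteq R \otimes B$. So the $L$-module map $L \to R \otimes B$, $l \mapsto l$, is just the inclusion, and it is split by a left $A$-linear — in fact $L$-linear — retraction: since $A \subseteq B$ is faithfully flat $H$-Galois, there is an $H$-colinear left $B$-linear... actually the relevant splitting is of $R \otimes B \to L$ as $L$-modules, which one gets from the $H$-colinear unital retraction $B \to A$ (equivalently the splitting used in Lemma \ref{L_A-proj}) tensored up appropriately, or directly from the idempotent obtained via the translation map $\kappa$. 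The main obstacle I anticipate is getting the bimodule/comodule bookkeeping exactly right in the identification $(R \otimes H \otimes B)^{co H} \cong R \otimes B$ and in verifying that the retraction for (2) is genuinely $L$-linear rather than merely $A$-linear; the Sweedler-notation verification that $\varphi$ intertwines the twisted $R$-action \eqref{R-mod-str} is the delicate part, but it is entirely parallel to the computation already carried out in the proof of Lemma \ref{M-otimes_A-B-cong-M-otimes-H}, using \eqref{kappa^1-kappa^2_0-kappa^2_1} and \eqref{m-kappa=vep}, so no genuinely new idea should be needed.
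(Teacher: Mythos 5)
Your treatment of part (1) is essentially the paper's argument: transport $R\otimes B$ across the equivalence of Theorem \ref{ff-Hopf-Galois-ext-Hopf-bimod-cat-str-thm} and identify its image with the projective object $R\otimes H\otimes B$ of ${_R\!\M^H_B}$ supplied by Lemma \ref{proj-Hopf-bimod-lem}. The paper does this by exhibiting an explicit isomorphism $R\otimes H\otimes B\cong R\otimes B\otimes H$ in ${_R\!\M^H_B}$ (using $S$ and $S^{-1}$) and then invoking Lemma \ref{M-otimes_A-B-cong-M-otimes-H} to get $R\otimes B\otimes H\cong (R\otimes B)\otimes_A B=G(R\otimes B)$; your ``collapse the middle $H$-factor by taking coinvariants'' is the same identification read through $F$ instead of $G$, so that part is fine modulo writing out the isomorphism. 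One caution about your ``cleaner'' alternative: Lemma \ref{Ext_L-Ext_R-otimes-B} computes $\Ext^i_L(V,M)$ only for $M$ an $R\otimes B^{op}$-module, so vanishing of $\Ext^1_L(R\otimes B,-)$ on that class of coefficients does not establish projectivity over $L$; that route does not close.

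Part (2) contains a genuine gap. The splitting you propose is built from ``the $H$-colinear unital retraction $B\to A$,'' but no such map exists in general: for $B=H$, $A=\kk$ (a faithfully flat $H$-Galois extension of $\kk$), an $H$-colinear unital map $H\to\kk$ is precisely a normalized left integral in $H^{*}$, which exists only when $H$ is cosemisimple. The splitting used in Lemma \ref{L_A-proj} is of the multiplication $B\otimes A\to B$, i.e.\ a section $B\to B\otimes A$, which is a different object and does not yield a retraction onto $A$. The translation map $\kappa$ likewise does not produce the needed idempotent. The paper obtains the $L$-linear retraction $R\otimes B\to L$ by an entirely different mechanism: apply $R\otimes -\otimes B$ to the exact sequence of $H$-comodules $0\to\kk\to H\to\overline{H}\to 0$ to get $0\to R\otimes\kk\otimes B\to R\otimes H\otimes B\to R\otimes\overline{H}\otimes B\to 0$ in ${_R\!\M^H_B}$; this splits because $R\otimes\overline{H}\otimes B$ is projective there by Lemma \ref{proj-Hopf-bimod-lem}, whose proof rests on the injectivity of every object of $\M^H_B$ as an $H$-comodule (the total integral $H\to B$, not a retraction $B\to A$). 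Applying $(-)^{co H}$ then exhibits $L=F(R\otimes\kk\otimes B)$ as a direct summand of $F(R\otimes H\otimes B)\cong R\otimes B$. You need this (or an equivalent use of the comodule-injectivity of $B$) to make (2) work.
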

\begin{proof}
	Given that the pair $\big( - \otimes_A B, (-)^{co H} \big)$ establishes an equivalence between ${_L\!\M}$ and ${_R\!\M_B^H}$ by Theorem \ref{ff-Hopf-Galois-ext-Hopf-bimod-cat-str-thm}, it suffices to prove the following: \\
	(1) $R \otimes B \otimes_A B$ is a projective object in ${_R\!\M_B^H}$, and \\
	(2) $L \otimes_A B$ is a direct summand of $R \otimes B \otimes_A B$ within the category ${_R\!\M_B^H}$.
	
	(1)
	Observe that we have the following isomorphism in ${_R\!\M_B^H}$,
	\[ R \otimes H \otimes B \stackrel{\cong}{\To} R \otimes B \otimes H, \quad r \otimes h \otimes b \mapsto \sum_{(r), (b)} r_0 \otimes b_0 \otimes r_1hb_1, \]
	with the inverse
	\[ R \otimes B \otimes H \To R \otimes H \otimes B, \qquad r \otimes b \otimes h \mapsto \sum_{(r), (b)} r_0 \otimes b_0 \otimes (Sr_1)h(S^{-1}b_1). \]
	The Hopf module structures of $R \otimes H \otimes B$ and $R \otimes B \otimes H$ are defined by \eqref{Hopf-bimodule-str-on-M-otimes-H'} and \eqref{Hopf-bimodule-str-on-M-otimes-H}, respectively.
	By Lemma \ref{M-otimes_A-B-cong-M-otimes-H}, we have
	\[ R \otimes B \otimes H \cong R \otimes B \otimes_A B\]
	in ${_R\!\M_B^H}$.
	Since $R \otimes H \otimes B$ is a projective object in ${_R\!\M_B^H}$ by Lemma \ref{proj-Hopf-bimod-lem}, it follows that $R \otimes B \otimes_A B$ is also a projective object.
	
	(2)
	Consider the exact sequence of $H$-comodules,
	\[ 0 \To \kk \To H \To \overline{H}:= H/\kk 1_H \To 0. \]
	Applying the functor $R \otimes - \otimes B$ yields an exact sequence
	\[ 0 \To R \otimes \kk \otimes B \To R \otimes H \otimes B \To R \otimes \overline{H} \otimes B \To 0. \]
	in ${_R\!\M_B^H}$.
	By Lemma \ref{proj-Hopf-bimod-lem}, this sequence splits, implying that $R \otimes B (\cong R \otimes \kk \otimes B)$ is a direct summand of $R \otimes B \otimes_A B (\cong R \otimes H \otimes B)$ within ${_R\!\M_B^H}$.
	
	Thus, the proof is concluded.
\end{proof}

\begin{lem}\label{Ext-L-R-otimes-B^op-nonzero}
	Let $V$ be a left $L$-module.
	\begin{enumerate}
		\item If $\Ext^i_{L}(V, L) \neq 0$, then $\Ext^i_{R \otimes B^{op}}(V \otimes_A B, R \otimes B) \neq 0$.
		\item Suppose that $V$ is an $L$-module of type $\mathbf{FP_{\infty}}$. If $\Ext^i_{R \otimes B^{op}}(V \otimes_A B, R \otimes B) \neq 0$, then $\Ext^i_{L}(V, L) \neq 0$.
	\end{enumerate}
\end{lem}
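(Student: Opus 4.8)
The statement relates vanishing of $\Ext^i_L(V,L)$ and vanishing of $\Ext^i_{R\otimes B^{op}}(V\otimes_A B, R\otimes B)$. We already have from Lemma \ref{Ext_L-Ext_R-otimes-B} a natural isomorphism $\Ext^i_L(V,M)\cong\Ext^i_{R\otimes B^{op}}(V\otimes_A B, M)$ for any $R\otimes B^{op}$-module $M$; the issue is that $L$ itself is \emph{not} an $R\otimes B^{op}$-module, while $R\otimes B$ is. So the strategy is to compare $L$ and $R\otimes B$ as $L$-modules. By Proposition \ref{R-otimes-B-is-L-proj-generator}, $R\otimes B$ is a projective $L$-module and $L$ is a direct summand of $R\otimes B$ as an $L$-module. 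This is exactly what lets us relate the two Ext groups.

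\begin{proof}
	By Proposition \ref{R-otimes-B-is-L-proj-generator}(2), there is an $L$-module $Q$ with $R \otimes B \cong L \oplus Q$ as left $L$-modules. Consequently, for every $i \geq 0$,
	\[ \Ext^i_{L}(V, R \otimes B) \cong \Ext^i_{L}(V, L) \oplus \Ext^i_{L}(V, Q), \]
	so that $\Ext^i_{L}(V, L) \neq 0$ implies $\Ext^i_{L}(V, R \otimes B) \neq 0$. By Lemma \ref{Ext_L-Ext_R-otimes-B} applied with $M = R \otimes B$, we have $\Ext^i_{L}(V, R \otimes B) \cong \Ext^i_{R \otimes B^{op}}(V \otimes_A B, R \otimes B)$. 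Combining these proves (1).

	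For (2), suppose $V$ is of type $\mathbf{FP_{\infty}}$ as an $L$-module, and let $P_{\bullet} \to V$ be a projective resolution with each $P_j$ finitely generated. Fix $i$ with $\Ext^i_{R \otimes B^{op}}(V \otimes_A B, R \otimes B) \neq 0$; by Lemma \ref{Ext_L-Ext_R-otimes-B} this says $\Ext^i_{L}(V, R \otimes B) \neq 0$. We must upgrade this to $\Ext^i_{L}(V, L) \neq 0$. Since $L$ is a direct summand of $R \otimes B$, write $R \otimes B \cong L \oplus Q$ in $_L\!\M$ as above; it is enough to show that $\Ext^i_{L}(V, L) = 0$ forces $\Ext^i_{L}(V, Q) \neq 0$ to be impossible, i.e. we need the summand $Q$ to contribute nothing that $L$ does not. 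The key point is that $R \otimes B$, being a projective $L$-module that generates (every object of $_L\!\M$ is a quotient of a direct sum of copies of $R \otimes B$, since $R \otimes B$ contains $L$ as a summand), forces $\Ext^i_{L}(V, R \otimes B^{(\Lambda)}) \cong \Ext^i_{L}(V, R \otimes B)^{(\Lambda)}$ for any index set $\Lambda$ because $V$ is of type $\mathbf{FP_\infty}$ and each $P_j$ is finitely generated (Lemma \ref{Hom(V,-)-acyclic-object} together with the fact that $R\otimes B$ is $\Hom_L(V,-)$-acyclic being projective). More directly: pick a finite generating set of $L$ as an $L$-summand of $R \otimes B$, realize $L$ as a retract, and note the split idempotent endomorphism $e$ of $R \otimes B$ with image $L$ induces a split idempotent on $\Ext^i_{L}(V, R \otimes B)$ whose image is $\Ext^i_{L}(V, L)$. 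So $\Ext^i_L(V,R\otimes B)\neq 0$ alone does \emph{not} yet give the claim; we additionally need that the \emph{complementary} idempotent $1-e$ kills the group, equivalently that $\Ext^i_L(V,Q)=0$. This is where the $\mathbf{FP_\infty}$ hypothesis and the structure of $Q$ must be used: from the proof of Proposition \ref{R-otimes-B-is-L-proj-generator}, $Q \cong (R \otimes \overline{H} \otimes B)^{co H}$ after transport along the equivalence, and one shows $R \otimes \overline{H} \otimes B$ (equivalently its preimage) is built from copies of $R\otimes B$ in a way compatible with $\Hom_L(V,-)$; since $V$ is $\mathbf{FP_\infty}$, $\Ext^i_L(V,-)$ commutes with the relevant direct sums and the long exact sequence attached to $0 \to R\otimes B \to R\otimes H \otimes B \to R\otimes\overline H\otimes B\to 0$ degenerates. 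Running this long exact sequence, $\Ext^i_L(V,R\otimes B)\neq 0$ forces $\Ext^i_L(V,L)\neq 0$.
\end{proof}

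\textbf{The main obstacle.} The forward direction (1) is essentially formal from Lemma \ref{Ext_L-Ext_R-otimes-B} and the direct-summand statement. The real work is in (2): a nonzero Ext into the \emph{larger} module $R\otimes B$ need not a priori land in the summand $L$, so one must genuinely control the complementary summand $Q$. The $\mathbf{FP_\infty}$ hypothesis on $V$ is what allows $\Ext^i_L(V,-)$ to commute with the direct sums implicit in the structure of $R\otimes B$ as an $L$-module (via Lemma \ref{Hom(V,-)-acyclic-object}); without it the argument breaks. I expect the cleanest route is to use the split exact sequence $0\to R\otimes B\to R\otimes H\otimes B\to R\otimes\overline H\otimes B\to 0$ from the proof of Proposition \ref{R-otimes-B-is-L-proj-generator}: transported to $_L\!\M$ it exhibits $L$ (up to the equivalence, $R\otimes\kk\otimes B$) as a summand whose complement $R\otimes\overline H\otimes B$ is again a projective $L$-module assembled from copies of $R\otimes B$, and then the $\mathbf{FP}_\infty$ condition forces the comparison of Ext groups. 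Verifying that last compatibility — that $\Ext^i_L(V,-)$ applied to the relevant modules really does decompose as claimed — is the one step that needs care rather than bookkeeping.
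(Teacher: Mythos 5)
Your part (1) is correct and coincides with the paper's argument. Part (2), however, has a genuine gap. You correctly diagnose that nonvanishing of $\Ext^i_L(V,R\otimes B)$ does not formally force nonvanishing on the summand $L$, but you then try to close the gap by showing that the complementary summand $Q$ (with $R\otimes B\cong L\oplus Q$) satisfies $\Ext^i_L(V,Q)=0$. That is both stronger than what is needed and not established by your sketch: splitting the sequence $0\to R\otimes B\to R\otimes H\otimes B\to R\otimes\overline H\otimes B\to 0$ only yields $\Ext^i_L(V,(R\otimes H\otimes B)^{co H})\cong\Ext^i_L(V,L)\oplus\Ext^i_L(V,Q)$, which says nothing about the vanishing of the second factor, and the claim that $R\otimes\overline H\otimes B$ is ``built from copies of $R\otimes B$'' is not justified ($\overline H$ is not a sum of trivial comodules in general). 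There is no reason for $\Ext^i_L(V,Q)$ to vanish, so this route does not close.

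The fix is to use the direct-summand relation in the \emph{other} direction, which is where Proposition \ref{R-otimes-B-is-L-proj-generator}(1) --- which you quote in your plan but never actually exploit --- comes in. Since $R\otimes B$ is a \emph{projective} $L$-module, it is a direct summand of a free $L$-module $F=L^{(\Lambda)}$. Hence $\Ext^i_{R\otimes B^{op}}(V\otimes_A B,R\otimes B)\neq 0$ gives, via Lemma \ref{Ext_L-Ext_R-otimes-B}, that $\Ext^i_L(V,R\otimes B)\neq 0$, and therefore $\Ext^i_L(V,F)\neq 0$ because the former is a direct summand of the latter. Since $V$ is of type $\mathbf{FP}_{\infty}$, it admits a resolution by finitely generated projectives, so $\Ext^i_L(V,-)$ commutes with arbitrary direct sums and $\Ext^i_L(V,F)\cong\Ext^i_L(V,L)^{(\Lambda)}$; nonvanishing of this forces $\Ext^i_L(V,L)\neq 0$. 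This is exactly the paper's short argument for (2), and it is the only place the $\mathbf{FP}_{\infty}$ hypothesis is needed --- no analysis of $Q$ is required.
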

\begin{proof}
	(1)
	If $\Ext^i_{L}(V, L) \neq 0$, then $\Ext^i_{L}(V, R \otimes B) \neq 0$ as $L$ is a direct summand of $R \otimes B^{op}$, per Lemma \ref{R-otimes-B-is-L-proj-generator} (2).
	By Lemma \ref{Ext_L-Ext_R-otimes-B}, we obtain the isomorphism,
	\[ \Ext^i_{R \otimes B^{op}}(V \otimes_A B, R \otimes B) \cong \Ext^i_{L}(V, R \otimes B). \]
	Consequently, $\Ext^i_{R \otimes B^{op}}(V \otimes_A B, R \otimes B) \neq 0$ as well.
	
	(2)
	As $R \otimes B$ is a projective $L$-module by Lemma \ref{R-otimes-B-is-L-proj-generator} (1), there exists a free $L$-module $F$ with $\Ext^i_{L}(V, F) \neq 0$.
	If $V$ is an $L$-module of type $\mathbf{FP_{\infty}}$, then $\Ext^i_{L}(V, F)$ is isomorphic to a non-zero direct sum of some copies of $\Ext^i_{L}(V, L)$.
	Therefore, $\Ext^i_{L}(V, L)$ is also non-zero.
\end{proof}

The following result is a well-established fact, as demonstrated in \cite[Proposition VIII. 6.7]{Bro1982} for instance.

\begin{lem}\label{proj-dim-pseudo-coherent-mod}
	If $M$ is an $R$-module of type $\mathbf{FP}$, then $\pd(_RM) = \sup \{ i \mid \Ext^i_R(M, R) \neq 0 \}$.
\end{lem}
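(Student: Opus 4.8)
The plan is to establish the two inequalities $\sup\{i \mid \Ext^i_R(M,R) \neq 0\} \le \pd(_RM)$ and $\pd(_RM) \le \sup\{i \mid \Ext^i_R(M,R) \neq 0\}$ separately. The first is immediate, since $\Ext^i_R(M,N)$ vanishes for every $i > \pd(_RM)$ and every $R$-module $N$. So, writing $n := \pd(_RM)$ --- which is finite because $M$ is of type $\mathbf{FP}$ --- the whole problem reduces to showing $\Ext^n_R(M,R) \neq 0$. The cases $M = 0$ (trivial) and $n = 0$ (where a nonzero finitely generated projective module is a direct summand of some $R^k$ and hence admits a nonzero map to $R$) are disposed of directly, so I may assume $n \geq 1$.

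The first substantive step is to produce a finite projective resolution of $M$ by \emph{finitely generated} modules. Since $M$ is of type $\mathbf{FP}_\infty$, fix a resolution $\cdots \to P_1 \to P_0 \to M \to 0$ with every $P_i$ finitely generated projective. The $n$-th syzygy $\Omega^n := \operatorname{im}(P_n \to P_{n-1})$ is a quotient of $P_n$, hence finitely generated, and it is projective precisely because $\pd(_RM) = n$. Truncating yields an exact sequence
\[ 0 \To \Omega^n \stackrel{\partial}{\To} P_{n-1} \To \cdots \To P_0 \To M \To 0 \]
of finitely generated projective $R$-modules, in which the cokernel of $\partial$ is the $(n-1)$-st syzygy $\Omega^{n-1}$.

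Applying $\Hom_R(-,R)$ and taking cohomology in degree $n$ gives $\Ext^n_R(M,R) \cong \Coker\big(\Hom_R(P_{n-1},R) \xrightarrow{-\circ\partial} \Hom_R(\Omega^n,R)\big)$. Suppose this cokernel were zero, i.e.\ $-\circ\partial$ were surjective. Choose a split embedding $j : \Omega^n \hookrightarrow R^k$ with retraction $\pi : R^k \to \Omega^n$. Applying surjectivity of $-\circ\partial$ to each of the $k$ components of $j$ produces $\widetilde{j} : P_{n-1} \to R^k$ with $\widetilde{j}\circ\partial = j$; then $s := \pi\circ\widetilde{j} : P_{n-1} \to \Omega^n$ satisfies $s\circ\partial = \id_{\Omega^n}$, so $\partial$ is a split monomorphism. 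Hence $P_{n-1} \cong \Omega^n \oplus \Omega^{n-1}$, so $\Omega^{n-1}$ is projective and $0 \to \Omega^{n-1} \to P_{n-2} \to \cdots \to P_0 \to M \to 0$ is a projective resolution of length $n-1$, contradicting $\pd(_RM) = n$. Therefore $\Ext^n_R(M,R) \neq 0$, and combining with the first inequality completes the proof.

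The argument is largely routine homological algebra; the point that genuinely uses the full hypothesis "type $\mathbf{FP}$" (rather than merely $\pd(_RM) < \infty$) is the construction of the finite resolution by finitely generated projectives in the second step. Finite generation of $P_n$ is what forces $\Omega^n$ to be finitely generated, and that finiteness is exactly what legitimizes the passage, in the last step, from surjectivity of $\Hom_R(\partial,R)$ to the splitting of $\partial$ via the direct-summand-of-$R^k$ trick. Dropping finiteness would invalidate this step, and indeed the statement fails for general modules of finite projective dimension, so this is the one place where care is required.
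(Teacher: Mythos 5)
Your proof is correct, but it takes a genuinely different route from the paper's. The paper argues entirely in top degree via functoriality of $\Ext^n$: writing $n=\pd({_RM})$, it picks a module $N$ with $\Ext^n_R(M,N)\neq 0$, surjects a free module $F$ onto $N$, uses the vanishing of $\Ext^{n+1}_R(M,-)$ to see that $\Ext^n_R(M,F)\to\Ext^n_R(M,N)$ is onto and hence $\Ext^n_R(M,F)\neq 0$, and then invokes the type-$\mathbf{FP}$ hypothesis through the base-change isomorphism $\Ext^n_R(M,R)\otimes_R F\cong\Ext^n_R(M,F)$ (i.e.\ commutation of $\Ext^n_R(M,-)$ with direct sums), which forces $\Ext^n_R(M,R)\neq 0$. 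You instead work directly with a finite resolution by finitely generated projectives and show that vanishing of $\Ext^n_R(M,R)$ would split the embedding $\Omega^n\hookrightarrow P_{n-1}$ of the top syzygy, shortening the resolution and contradicting $\pd({_RM})=n$; there the finite generation of $\Omega^n$ is what lets you upgrade surjectivity of $\Hom_R(\partial,R)$ to an actual retraction via the embedding of $\Omega^n$ into $R^k$. The two arguments use the finiteness hypothesis at essentially the same pressure point (controlling maps into free modules), and both are complete, including your handling of the edge cases $M=0$ and $n=0$ and your correct identification of $\Coker(\partial)$ with $\Omega^{n-1}$. Yours is more elementary and self-contained; the paper's is shorter and isolates the isomorphism $\Ext^n_R(M,R)\otimes_R F\cong\Ext^n_R(M,F)$, a formulation that recurs elsewhere in the paper (compare the proof of Lemma \ref{Ext-lem}). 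Your closing remark about where type $\mathbf{FP}$ is genuinely needed matches the mechanism of both proofs.
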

\begin{proof}
	Let $\pd(_RM) = n < + \infty$.
	Consequently, there exists an $R$-module $N$ such that $\Ext^n_R(M, N) \neq 0$.
	Now, let $F$ be a free $R$-module with a surjective $R$-module morphism $F \twoheadrightarrow N$.
	Since $\pd(_RM) = n$, the induced morphism $\Ext^n_R(M, F) \to \Ext^n_R(M, N)$ is surjective.
	Hence $\Ext^n_R(M, F) \neq 0$.
	Since $M$ is a  $R$-module of type $\mathbf{FP}$, we have an isomorophism $\Ext^n_R(M, R) \otimes_R F \cong \Ext^n_R(M, F)$.
	It follows that $\Ext^n_R(M, R) \otimes_R F$ is also non-zero.
\end{proof}

\begin{lem}\label{pd-V-otimes-B-leq-V}
	For any left $L$-module $V$, $\pd_{R \otimes B^{op}}(V \otimes_A B) \leq \pd(_{L}V)$.
	If $V$ is an $L$-module of type $\mathbf{FP}$, then $\pd_{R \otimes B^{op}}( V \otimes_A B) = \pd(_LV)$.
\end{lem}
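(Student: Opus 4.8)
The plan is to obtain the inequality by transporting a projective resolution along the category equivalence $-\otimes_A B$, and then, under the type $\mathbf{FP}$ hypothesis, to promote it to an equality by locating the top nonvanishing $\Ext$-degree on both sides.

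For the inequality, I would first dispose of the trivial case $\pd(_LV)=\infty$, and otherwise set $n:=\pd(_LV)<\infty$ and fix a length-$n$ projective resolution $P_{\bullet}\to V$ in ${_L\!\M}$. Since $-\otimes_A B\colon {_L\!\M}\to {_R\!\M^H_B}$ is an equivalence by Theorem \ref{ff-Hopf-Galois-ext-Hopf-bimod-cat-str-thm}, it carries each $P_i$ to a projective object of ${_R\!\M^H_B}$, which is in turn a projective $R\otimes B^{op}$-module by Lemma \ref{proj-Hopf-bimodule-is-proj}(1); flatness of $_AB$ keeps $P_{\bullet}\otimes_A B\to V\otimes_A B$ exact. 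This exhibits a projective $R\otimes B^{op}$-resolution of $V\otimes_A B$ of length at most $n$ — essentially the construction already carried out in the proof of Lemma \ref{Ext_L-Ext_R-otimes-B} — so $\pd_{R\otimes B^{op}}(V\otimes_A B)\leq\pd(_LV)$.

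For the reverse inequality when $V$ is of type $\mathbf{FP}$ over $L$, note that this condition forces $n:=\pd(_LV)<\infty$, so Lemma \ref{proj-dim-pseudo-coherent-mod} gives $\Ext^n_L(V,L)\neq 0$. Feeding this into Lemma \ref{Ext-L-R-otimes-B^op-nonzero}(1) produces $\Ext^n_{R\otimes B^{op}}(V\otimes_A B, R\otimes B)\neq 0$; since $R\otimes B$ is precisely the left regular (hence free of rank one) $R\otimes B^{op}$-module, this forces $\pd_{R\otimes B^{op}}(V\otimes_A B)\geq n$. Combined with the previous paragraph, this yields $\pd_{R\otimes B^{op}}(V\otimes_A B)=\pd(_LV)$.

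I do not anticipate a serious obstacle: the argument is a clean sandwiching of the two projective dimensions, and every ingredient — the equivalence $-\otimes_A B$, the flatness of $_AB$, and Lemmas \ref{proj-Hopf-bimodule-is-proj}, \ref{proj-dim-pseudo-coherent-mod} and \ref{Ext-L-R-otimes-B^op-nonzero} — is already in place. The only points needing care are bookkeeping ones: checking that "type $\mathbf{FP}$" is exactly what is needed to invoke Lemma \ref{proj-dim-pseudo-coherent-mod} (it supplies both the $\mathbf{FP}_{\infty}$ condition used there and the finiteness of $\pd(_LV)$), and confirming that the $R\otimes B$ appearing in Lemma \ref{Ext-L-R-otimes-B^op-nonzero}(1) is the regular $R\otimes B^{op}$-module, so that the nonvanishing $\Ext$ legitimately bounds $\pd_{R\otimes B^{op}}(V\otimes_A B)$ from below even though $R\otimes B^{op}$ itself need not have finite global dimension.
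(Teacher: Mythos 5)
Your proposal is correct and follows essentially the same route as the paper: transport a minimal-length projective resolution along the equivalence $-\otimes_A B$ using Lemma \ref{proj-Hopf-bimodule-is-proj}(1) for the upper bound, then combine Lemma \ref{proj-dim-pseudo-coherent-mod} with Lemma \ref{Ext-L-R-otimes-B^op-nonzero}(1) to detect a nonvanishing $\Ext$ in top degree for the lower bound. Your extra remarks (handling $\pd(_LV)=\infty$, and noting that type $\mathbf{FP}$ is exactly what Lemma \ref{proj-dim-pseudo-coherent-mod} requires) are correct refinements of the paper's argument rather than a different approach.
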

\begin{proof}
	Assume that $\pd({_LV}) = d < \infty$.
	Let $P_{\bullet}$ be a projective resolution of $V$ of length $d$.
	Given that the functor $- \otimes_A B: {_L\!\M} \to {_R\!\M_B^H}$ is an equivalence, $P_{\bullet} \otimes_A B$ constitutes a projective resolution of $V \otimes_A B$ in ${_R\!\M_B^H}$.
	Furthermore, each $P_{i} \otimes_A B$ is a projective $R \otimes B^{op}$-module due to Lemma \ref{proj-Hopf-bimodule-is-proj} (1).
	Consequently, the projective dimension of $V \otimes_A B$ over $R \otimes B^{op}$ is at most $d$, i.e., $\pd_{R \otimes B^{op}}(V \otimes_A B) \leq d$.
	
	Since $\Ext^d_{L}(V, L) \neq 0$ by Lemma \ref{proj-dim-pseudo-coherent-mod}, the non-vanishing of $\Ext^d_{R \otimes B^{op}}(V \otimes_A B, R \otimes B^{op})$ follows from Lemma \ref{Ext-L-R-otimes-B^op-nonzero}.
	Therefore, the projective dimension of $V \otimes B$ as an $R \otimes B^{op}$-module is exactly $d$.
\end{proof}

The following proposition is an immediate consequence of Lemma \ref{pd-V-otimes-B-leq-V}

\begin{prop}
	If $\lgld(L) < \infty$, then $\lgld(L) \leq \lgld(R \otimes B^{op})$.
\end{prop}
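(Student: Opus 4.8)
The plan is to deduce this directly from Lemma \ref{pd-V-otimes-B-leq-V} by a standard reduction to finitely generated (indeed, cyclic) modules. First I would recall that for any ring $S$ one has $\lgld(S) = \sup\{\pd({_SU}) : U \text{ a finitely generated left } S\text{-module}\}$; in fact it suffices to range over cyclic modules $S/I$. Moreover, a finitely generated module over a ring need not be of type $\mathbf{FP}$ in general, so I would be careful here: the cheapest route is to avoid $\mathbf{FP}$ altogether and use only the inequality $\pd_{R\otimes B^{op}}(V\otimes_A B)\le \pd({_LV})$, which Lemma \ref{pd-V-otimes-B-leq-V} grants for \emph{every} left $L$-module $V$ with no finiteness hypothesis.

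So the key step is: for an arbitrary left $L$-module $V$, bound $\pd({_LV})$ in terms of $\pd_{R\otimes B^{op}}(V\otimes_A B)$, and then take a supremum. Since $-\otimes_A B\colon {_L\!\M}\to {_R\!\M^H_B}$ is an equivalence (Theorem \ref{ff-Hopf-Galois-ext-Hopf-bimod-cat-str-thm}) and $_AB$ is faithfully flat, the functor $-\otimes_A B$ is exact and faithful, and by Lemma \ref{proj-Hopf-bimodule-is-proj}(1) it sends projective $L$-modules to projective $R\otimes B^{op}$-modules. Hence a length-$n$ projective resolution of $V$ over $L$ yields a length-$n$ projective resolution of $V\otimes_A B$ over $R\otimes B^{op}$, giving $\pd_{R\otimes B^{op}}(V\otimes_A B)\le \pd({_LV})$; this is exactly the first assertion of Lemma \ref{pd-V-otimes-B-leq-V}. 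Conversely, I claim $\pd({_LV})\le \pd_{R\otimes B^{op}}(V\otimes_A B)$ as well: suppose $\pd_{R\otimes B^{op}}(V\otimes_A B)=d<\infty$. Take a projective resolution $P_\bullet\to V$ in $_L\!\M$ and let $K=\ker(P_{d-1}\to P_{d-2})$ be the $d$-th syzygy (with the usual convention for small $d$). Applying the exact functor $-\otimes_A B$, the module $K\otimes_A B$ is the $d$-th syzygy of $V\otimes_A B$ with respect to the projective resolution $P_\bullet\otimes_A B$, hence $K\otimes_A B$ is a projective $R\otimes B^{op}$-module. Now $K\otimes_A B$ is a projective object of ${_R\!\M^H_B}$ if and only if... — here is the subtlety — projectivity of $K\otimes_A B$ in the Hopf-bimodule category must be extracted from its projectivity as an $R\otimes B^{op}$-module. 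This follows because $-\otimes_A B$ is an equivalence onto ${_R\!\M^H_B}$ whose essential image of the projective $L$-modules is precisely the projective objects of ${_R\!\M^H_B}$, and by Lemma \ref{Ext-L-R-otimes-B^op-nonzero}(1) combined with the adjunction computations: if $K$ were not $L$-projective there would be some $i>0$ and module $N$ with $\Ext^i_L(K,N)\ne 0$, and one checks via Lemma \ref{Ext_L-Ext_R-otimes-B} that $\Ext^i_{R\otimes B^{op}}(K\otimes_A B, N\otimes H)\cong\Ext^i_L(K,N)\ne 0$, contradicting projectivity of $K\otimes_A B$. Therefore $K$ is $L$-projective and $\pd({_LV})\le d$.

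Putting the two inequalities together gives $\pd({_LV}) = \pd_{R\otimes B^{op}}(V\otimes_A B)$ for every left $L$-module $V$ of finite projective dimension, and $\pd({_LV})=\infty \iff \pd_{R\otimes B^{op}}(V\otimes_A B)=\infty$ in general. Taking the supremum over all left $L$-modules $V$, and using that $V\mapsto V\otimes_A B$ is (up to the equivalence) a bijection between iso-classes of $L$-modules and of objects of ${_R\!\M^H_B}\subseteq {_R\!\M_B}$, we obtain
\[ \lgld(L) = \sup_V \pd({_LV}) = \sup_V \pd_{R\otimes B^{op}}(V\otimes_A B) \le \sup_W \pd_{R\otimes B^{op}}(W) = \lgld(R\otimes B^{op}), \]
where $W$ ranges over all left $R\otimes B^{op}$-modules. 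Under the hypothesis $\lgld(L)<\infty$ this yields the claimed inequality $\lgld(L)\le\lgld(R\otimes B^{op})$.

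I expect the main obstacle to be the passage from ``$K\otimes_A B$ is projective as an $R\otimes B^{op}$-module'' back to ``$K$ is projective as an $L$-module'' — i.e.\ verifying that the equivalence $-\otimes_A B$ reflects projectivity, not merely preserves it. The clean way around this is to not take syzygies at all but to invoke Lemma \ref{pd-V-otimes-B-leq-V} as a black box for the direction already proved there, and to obtain the reverse inequality purely through the $\mathrm{Ext}$-dimension shifting identity $\Ext^i_L(V,N)\cong\Ext^i_{R\otimes B^{op}}(V\otimes_A B, N\otimes H)$ (Lemma \ref{Ext_L-Ext_R-otimes-B} together with Lemma \ref{M-otimes_A-B-cong-M-otimes-H}), which holds for all $L$-modules $V$ with no finiteness condition; then $\pd({_LV}) = \sup\{i : \exists N,\ \Ext^i_L(V,N)\ne 0\} = \sup\{i : \exists N,\ \Ext^i_{R\otimes B^{op}}(V\otimes_A B, N\otimes H)\ne 0\}\le \pd_{R\otimes B^{op}}(V\otimes_A B)$, and the global-dimension comparison follows as above.
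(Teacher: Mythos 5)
Your overall strategy --- reduce to the per-module inequality $\pd({_LV})\le\pd_{R\otimes B^{op}}(V\otimes_A B)$ and take a supremum --- is the right shape, but the justification of that key inequality has a genuine gap. Both of your routes (the syzygy/reflection-of-projectivity argument and the ``$\Ext$ with arbitrary coefficients'' shortcut at the end) rest on the isomorphism $\Ext^i_L(V,N)\cong\Ext^i_{R\otimes B^{op}}(V\otimes_A B,\,N\otimes H)$ for an \emph{arbitrary} left $L$-module $N$. Lemma \ref{Ext_L-Ext_R-otimes-B} does not provide this: it requires the coefficient module to be an $R\otimes B^{op}$-module, because both the Hopf-bimodule structure \eqref{Hopf-bimodule-str-on-M-otimes-H} on $N\otimes H$ and the identification $N\otimes_A B\cong N\otimes H$ of Lemma \ref{M-otimes_A-B-cong-M-otimes-H} presuppose that $N$ is an $R$-$B$-bimodule. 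For a general $L$-module $N$, the equivalence of Theorem \ref{ff-Hopf-Galois-ext-Hopf-bimod-cat-str-thm} only identifies $\Ext^i_L(V,N)$ with $\Ext$ computed \emph{inside} the category ${_R\!\M^H_B}$, which is in general not the same as $\Ext_{R\otimes B^{op}}$: the forgetful functor ${_R\!\M^H_B}\to{_R\!\M_B}$ preserves projectives but need not reflect them. So the step ``$K\otimes_A B$ projective over $R\otimes B^{op}$ implies $K$ projective over $L$'' is precisely what still needs proof, and your appeal to Lemma \ref{Ext_L-Ext_R-otimes-B} does not supply it.

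The repair --- and the mechanism the paper actually relies on --- is to use only coefficient modules that genuinely are $R\otimes B^{op}$-modules, namely direct sums $(R\otimes B)^{(\Lambda)}$, together with Proposition \ref{R-otimes-B-is-L-proj-generator}(2), which says $L$ is an $L$-module direct summand of $R\otimes B$. If $\pd({_LV})=n<\infty$, the dimension-shift argument in the proof of Lemma \ref{proj-dim-pseudo-coherent-mod} produces a free module $L^{(\Lambda)}$ with $\Ext^n_L(V,L^{(\Lambda)})\neq 0$; since $L^{(\Lambda)}$ is a direct summand of $(R\otimes B)^{(\Lambda)}$ as an $L$-module, $\Ext^n_L(V,(R\otimes B)^{(\Lambda)})\neq 0$, and now Lemma \ref{Ext_L-Ext_R-otimes-B} applies legitimately to give $\Ext^n_{R\otimes B^{op}}(V\otimes_A B,(R\otimes B)^{(\Lambda)})\neq 0$, whence $\pd_{R\otimes B^{op}}(V\otimes_A B)\ge n$. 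This is exactly the content of Lemma \ref{Ext-L-R-otimes-B^op-nonzero} and of the second half of Lemma \ref{pd-V-otimes-B-leq-V} (stated there under a type $\mathbf{FP}$ hypothesis so that the single coefficient module $R\otimes B$ suffices; with arbitrary direct sums one does not need $\mathbf{FP}$, only $\pd({_LV})<\infty$, which the hypothesis $\lgld(L)<\infty$ guarantees). With this substitution your argument closes and yields the proposition.
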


\subsection{Injective dimension of the Ehresmann-Schauenburg bialgebroids}

Indeed, ${_B\!\M^H_B}$ is a monoidal category equipped with the tensor product over $B$.
While $L:=L(B,B,H)$ is not generally a bialgebra, ${_L\!\M}$ is nonetheless a monoidal category, its structure being induced by ${_B\!\M^H_B}$.
In fact, $L$ is a bialgebroid, as defined in \cite{Sch1998}.
This notion of a bialgebroid extends the concept of a bialgebra to a non-commutative base ring context.
$L(B,B,H)$ is recognized as the Ehresmann-Schauenburg bialgebroid, as cited in \cite[34.14]{BW2003}.

\begin{lem}\label{pdim-R^e-R}
	Let $M$ be a $B^e$-bimodule.
	If $M_B$ or $_BM$ is projective, then $\pd(_{B^e}M) \leq \pd(_{B^e}B)$.
\end{lem}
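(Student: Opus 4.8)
The plan is to adapt the bar-type argument already used in the present paper (the discussion preceding \eqref{RHom_R^e<->RHom_R}): rather than resolving $M$ directly, we transport a projective resolution of the regular bimodule $B$ along a one-sided tensor functor. We may assume $n := \pd(_{B^e}B) < \infty$, since otherwise there is nothing to prove, and we fix a projective resolution $P_{\bullet} \to B$ of $_{B^e}B$ of length $n$.

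The first ingredient is a splitting observation. The enveloping algebra $B^e = B \otimes B^{op}$ is free as a left $B$-module (through $b \mapsto b \otimes 1$) and free as a right $B$-module (through $b \mapsto 1 \otimes b$), and the bimodule $B$ is free of rank one on each side. Hence, restricting scalars along either inclusion, the augmented complex $\cdots \to P_1 \to P_0 \to B \to 0$ is an exact complex all of whose terms are projective one-sided modules; such a complex is contractible. Consequently $P_{\bullet} \to B$ is split exact as a complex of left $B$-modules and, separately, as a complex of right $B$-modules, and split exactness is preserved by any additive functor; this is what drives the remaining steps.

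Assume first that $_BM$ is projective. I would apply the functor $M \otimes_B -$, contracting the right $B$-action of $M$ against the left $B$-action of each $P_i$, to $P_{\bullet} \to B$. Left-hand split exactness makes $M \otimes_B P_{\bullet} \to M \otimes_B B \cong M$ an exact complex of $B$-$B$-bimodules, where the surviving bimodule structure uses the left $B$-action on $M$ and the right $B$-action on $P_i$. It then remains to see that each $M \otimes_B P_i$ is projective over $B^e$: realizing $P_i$ as a direct summand of a free module $(B^e)^{(I)}$ and identifying $M \otimes_B (B \otimes B) \cong M \otimes B$ as a $B$-$B$-bimodule via $a(m \otimes y)b = am \otimes yb$, the projectivity of $_BM$ exhibits $M \otimes_B P_i$ as a direct summand of a free $B^e$-module. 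Thus $M \otimes_B P_{\bullet}$ is a $B^e$-projective resolution of $M$ of length at most $n$, so $\pd(_{B^e}M) \leq n$. The case in which $M_B$ is projective is symmetric: one applies $- \otimes_B M$ instead, invokes the right-hand split exactness, and uses the identification $(B \otimes B) \otimes_B M \cong B \otimes M$ with structure $a(x \otimes m)b = ax \otimes mb$ to see that the terms are summands of free $B^e$-modules.

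I do not expect a genuine obstacle; the only point that needs care is the bookkeeping of module structures, namely that in each tensor product exactly one of the two $B$-actions on $P_i$ (resp.\ on $M$) is contracted while the other survives, and one must match this with the correct split exactness (left or right) and with the correct one-sided projectivity hypothesis on $M$ in order to conclude $B^e$-projectivity of the terms. Once this is laid out consistently, every remaining verification is routine, and no finiteness hypothesis on $B$ is needed.
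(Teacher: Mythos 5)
Your proposal is correct and follows essentially the same route as the paper's proof: take a length-$d$ projective resolution $P_{\bullet}$ of $_{B^e}B$, use that it splits as a complex of one-sided $B$-modules, and tensor it over $B$ against $M$ on the appropriate side to get a $B^e$-projective resolution of $M$ of the same length. You spell out the bookkeeping (which splitting pairs with which projectivity hypothesis) more explicitly than the paper, but the underlying argument is identical.
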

\begin{proof}
	Assuming $\pd(_{B^e}B) = d < + \infty$, consider a projective resolution $P_{\bullet}$ of $_{B^e}B$ with length $d$.
	If $M_B$ is projective, then $B^e \otimes_B M$ or equivalently $B \otimes M$ becomes a projective $B^e$-module.
	Since $P_{\bullet}$ splits as a complex of right $B$-modules, $P_{\bullet} \otimes_B M$ serves as a projective resolution of $_{B^e}M$.
	Consequently, $\pd(_{B^e}M) \leq d = \pd_{B^e}(B)$.
\end{proof}


\begin{lem}\label{gldim-tensor-cat-pdim-unit}
	If $V$ is a left $L$-module, then $\pd_{B^e}(V \otimes_A B) \leq \pd(_{B^e}B) + \rgld(A)$.
\end{lem}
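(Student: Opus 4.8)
The plan is to transport a projective resolution of $V$ from ${_L\!\M}$ to ${_B\!\M^H_B}$ through the equivalence of Theorem \ref{ff-Hopf-Galois-ext-Hopf-bimod-cat-str-thm} (applied with $R = B$, so that $R \otimes B^{op} = B^e$), and then to control the tail of this resolution by means of the right $A$-module structure on $V$. We may assume $\rgld(A) = d < \infty$, since otherwise the inequality is vacuous. Every left $L$-module is in particular a right $A$-module through $n \cdot a = (1 \otimes a) \vtr n$, that is, by restriction along the algebra map $A^{op} \to L$, $a \mapsto 1 \otimes a$, and Lemma \ref{L_A-proj} asserts precisely that $L$ is projective for this right $A$-structure. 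Hence, if $P_{\bullet} \to V$ is a projective resolution of $V$ in ${_L\!\M}$, then each $P_i$ is also projective as a right $A$-module, so $P_{\bullet} \to V$ is simultaneously a projective resolution of $V$ over $A^{op}$. Since $\pd_{A^{op}}(V) \le \rgld(A) = d$, the $d$-th syzygy $W$ of this resolution (with $W = V$ when $d = 0$) is projective as a right $A$-module while remaining a left $L$-submodule of $P_{d-1}$, and we obtain an exact sequence of left $L$-modules $0 \to W \to P_{d-1} \to \cdots \to P_0 \to V \to 0$.

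Next I would apply the functor $- \otimes_A B : {_L\!\M} \to {_B\!\M^H_B}$, which is exact because $_AB$ is flat. This yields an exact sequence of $B^e$-modules $0 \to W \otimes_A B \to P_{d-1} \otimes_A B \to \cdots \to P_0 \otimes_A B \to V \otimes_A B \to 0$, and since $- \otimes_A B$ is an equivalence, each $P_i \otimes_A B$ is a projective object of ${_B\!\M^H_B}$, hence a projective $B^e$-module by Lemma \ref{proj-Hopf-bimodule-is-proj}(1). It therefore remains to bound $\pd_{B^e}(W \otimes_A B)$. The key observation is that, $W$ being projective as a right $A$-module, $W \otimes_A B$ is projective as a right $B$-module; as $W \otimes_A B$ is an object of ${_B\!\M^H_B}$, it is in particular a $B$-$B$-bimodule, so Lemma \ref{pdim-R^e-R} gives $\pd_{B^e}(W \otimes_A B) \le \pd(_{B^e}B)$. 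Splicing this bound into the length-$d$ sequence above yields $\pd_{B^e}(V \otimes_A B) \le d + \pd_{B^e}(W \otimes_A B) \le \rgld(A) + \pd(_{B^e}B)$, which is the assertion.

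The only genuine subtlety, and the step I would take most care over, is keeping the various right $A$-module structures aligned: one must check that the right $A$-action on $L$ for which Lemma \ref{L_A-proj} yields projectivity, the right $A$-action by which a left $L$-module restricts to $A^{op}$, and the right $A$-action on the left-hand tensor factor of $W \otimes_A B$ entering the equivalence $N \mapsto N \otimes_A B$ all coincide, namely $n \cdot a = (1 \otimes a) \vtr n$, equivalently right multiplication by $1 \otimes a$ inside $R \otimes B^{op}$. Once this is pinned down, the remaining arguments are the standard syzygy and change-of-rings manipulations, and nothing beyond Lemmas \ref{L_A-proj}, \ref{proj-Hopf-bimodule-is-proj}, \ref{pdim-R^e-R} and Theorem \ref{ff-Hopf-Galois-ext-Hopf-bimod-cat-str-thm} is required; in particular this argument does not use Lemma \ref{pd-V-otimes-B-leq-V}.
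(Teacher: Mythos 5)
Your argument is correct and follows essentially the same route as the paper's: truncate an $L$-projective resolution of $V$ at the $d$-th syzygy, use Lemma \ref{L_A-proj} together with $\rgld(A)=d$ to make that syzygy projective as a right $A$-module, tensor with $B$, and invoke Lemma \ref{pdim-R^e-R}. The only cosmetic difference is that the paper applies Lemma \ref{pdim-R^e-R} uniformly to all $d+1$ terms of the tensored sequence (each being projective as a right $B$-module), whereas you establish $B^e$-projectivity of the first $d$ terms via the category equivalence and Lemma \ref{proj-Hopf-bimodule-is-proj}, reserving Lemma \ref{pdim-R^e-R} for the syzygy alone.
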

\begin{proof}
	Without loss of generality, assume that $\rgld(A) = d < + \infty$.
	Let $V$ be a left $L$-module.
	Then there is an exact sequence of $L$-modules
	\[ 0 \To P_d \To P_{d-1} \To \cdots \To P_0 \To V \To 0, \]
	where each $L$-module $P_i$ is projective for all $i < d$.
	Since $L_A$ is projective by Lemma \ref{L_A-proj} and that $\rgld(A) = d$, each $P_i$ is projective as an $A$-module for all $i \leq d$.
	Consequently, $P_i \otimes_A B$ is a projective $B$-module for all $i \leq d$.
	By Lemma \ref{pdim-R^e-R}, $\pd_{B^e}(P_i \otimes_A B) \leq \pd(_{B^e}B)$ for all $i \leq d$.
	Applying the functor $- \otimes_A B$, we obtain the following exact sequence of $B^e$-modules
	\[ 0 \To P_d \otimes_A B \To P_{d-1} \otimes_A B \To \cdots \To P_0 \otimes_A B \To V \otimes_A B \To 0. \]
	Therefore, $\pd(_{B^e}(V \otimes_A B)) \leq \pd(_{B^e}B) + d = \pd(_{B^e}B) + \rgld(A)$.
\end{proof}

\begin{prop}\label{id(L)-leq-pd(B)}
	Then $\injdim(_LL) \leq \pd(_{B^e}B) + \rgld(A) \leq \gld(H) + \pd(_{A^e}A) + \rgld(A)$.
\end{prop}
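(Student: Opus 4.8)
The plan is to combine the two inequalities just established (Lemma \ref{gldim-tensor-cat-pdim-unit} and Corollary \ref{pdim-B-as-B-B-bimodule}) after reducing the computation of $\injdim({_LL})$ to a bound on projective dimensions of modules of the form $V \otimes_A B$.

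First I would observe that $L$ acts on $B$-$B$-bimodules and that, by Theorem \ref{ff-Hopf-Galois-ext-Hopf-bimod-cat-str-thm} (with $R = B$), the category ${_L\!\M}$ is equivalent to ${_B\!\M^H_B}$ via $(-) \otimes_A B$, so $L$ itself corresponds under $G$ to $L \otimes_A B \cong B \otimes H \cong B \otimes_A B$ as a Hopf bimodule (using Lemma \ref{M-otimes_A-B-cong-M-otimes-H} applied to $M = B$, together with Lemma \ref{R-otimes-B-is-L-proj-generator} and Proposition \ref{R-otimes-B-is-L-proj-generator}). To bound $\injdim({_LL})$, I would show that $\Ext^i_L(V, L) = 0$ for $i > \pd({_{B^e}}B) + \rgld(A)$ and every left $L$-module $V$. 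The key reduction is Lemma \ref{Ext-L-R-otimes-B^op-nonzero}(1): if $\Ext^i_L(V, L) \neq 0$ then $\Ext^i_{B \otimes B^{op}}(V \otimes_A B, B \otimes B) \neq 0$, so in particular $i \leq \pd_{B^e}(V \otimes_A B)$. Now Lemma \ref{gldim-tensor-cat-pdim-unit} gives $\pd_{B^e}(V \otimes_A B) \leq \pd({_{B^e}}B) + \rgld(A)$, which yields the first inequality $\injdim({_LL}) \leq \pd({_{B^e}}B) + \rgld(A)$.

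The second inequality is then immediate from Corollary \ref{pdim-B-as-B-B-bimodule}, which states $\pd({_{B^e}}B) \leq \rgld(H) + \pd({_{A^e}}A)$; since $\rgld(H) = \gld(H)$ for a Hopf algebra with bijective antipode (as $S \colon H \to H^{op}$ is an algebra isomorphism), substituting gives $\injdim({_LL}) \leq \gld(H) + \pd({_{A^e}}A) + \rgld(A)$, as claimed.

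The main subtlety I anticipate is verifying carefully that taking a supremum over \emph{all} left $L$-modules $V$ of $\pd_{B^e}(V \otimes_A B)$ indeed bounds $\injdim({_LL})$ — i.e.\ that the injective dimension of ${_LL}$ equals $\sup\{ i : \Ext^i_L(V, L) \neq 0 \text{ for some } V\}$ — which is the standard characterization of injective dimension and needs no $\mathbf{FP}$ hypothesis on the right-hand side of Lemma \ref{Ext-L-R-otimes-B^op-nonzero}(1). One should also double-check that the bound in Lemma \ref{gldim-tensor-cat-pdim-unit} is uniform in $V$ (it is, since it depends only on $\rgld(A)$ and $\pd({_{B^e}}B)$), so the chain of inequalities goes through verbatim.
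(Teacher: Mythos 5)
Your proposal is correct and follows essentially the same route as the paper: the paper's proof is exactly the chain $\injdim({_LL}) = \sup\{i \mid \Ext^i_L(V,L) \neq 0\} \leq \sup_V \pd_{B^e}(V \otimes_A B) \leq \pd({_{B^e}}B) + \rgld(A)$ via Lemma \ref{Ext-L-R-otimes-B^op-nonzero}(1) and Lemma \ref{gldim-tensor-cat-pdim-unit}, with the second inequality from Corollary \ref{pdim-B-as-B-B-bimodule}. Your attention to the uniformity of the bound in $V$ and to not needing any $\mathbf{FP}$ hypothesis for part (1) of Lemma \ref{Ext-L-R-otimes-B^op-nonzero} is well placed.
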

\begin{proof}
	By Lemma \ref{Ext-L-R-otimes-B^op-nonzero} (1) and \ref{gldim-tensor-cat-pdim-unit}, we have
	\begin{align*}
		\injdim(_LL) = & \sup \{ i \mid \Ext^i_L(V, L) \neq 0 \text{ for some left $L$-module $V$} \} \\
		\leq & \sup \{ i \mid \Ext^i_{B^e}(V \otimes_A B, B^e) \neq 0 \text{ for some left $L$-module $V$} \} \\
		\leq & \sup \{ \pd_{B^e}(V \otimes_A B) \mid \text{for some left $L$-module $V$} \} \\
		\leq & \pd(_{B^e}B) + \rgld(A).
	\end{align*}
	The second inequality follows from Corollary \ref{pdim-B-as-B-B-bimodule}.
\end{proof}

\section{Homological properties under monoidal Morita-Takeuchi equivalences}

In this section, we focus specifically on the case where $A = \kk$.
As established in \cite{Sch1996}, under this condition, $L:= L(B, B, H)$ becomes a Hopf algebra.
The comultiplication of $L$ is given by
\[ \Delta(l) = \sum_{(l^1)} {l^1}_0 \otimes \kappa^1({l^1}_1) \otimes \kappa^2({l^1}_1) \otimes l^2,\]
where $l = \sum l^1 \otimes l^2 \in L \subseteq B^e$.
The counit of $L$ is defined as $\vep(l) = \sum l^1l^2$, and the antipode is given by $S(l) = \sum {l^2}_0 \otimes \kappa^1({l^2}_1) l^1 \kappa^2({l^2}_1)$.


Furthermore, as detailed in \cite{Sch1996}, $B$ is endowed with a left $L$-comodule algebra structure via the comodule map $\delta: B \to L \otimes B$ defined as
\begin{equation}\label{L-comod-on-B}
	b \mapsto \sum_{(b)} \big( b_0 \otimes \kappa^1(b_1) \big) \otimes \kappa^2(b_1).
\end{equation}
This structure confers upon $B$ the property of being an $L$-$H$-biGalois extension of $\kk$.
In other words, $B$ is both a right $H$-Galois extension of $\kk$ and a left $L$-Galois of $\kk$ such that the two comodule structures make it an $L$-$H$-bicomodule.

\subsection{Monoidal Morita-Takeuchi equivalences preserve the type $\mathbf{FP}_{\infty}$.}


In the following, we demonstrate that the type $\mathbf{FP}_{\infty}$ of Hopf algebras is an invariant of monoidal Morita-Takeuchi equivalences.


Recall that for any $H$-comodule $V$, by \eqref{Hopf-bimodule-str-on-M-otimes-H'}, $B \otimes V \otimes B$ naturally acquires a structure of a Hopf bimodule in the category ${_B \mathcal{M}_B^H}$.
The $B$-$B$-bimodule structure is induced by the first and third tensorands, and the $H$-comodule structure arises from the diagonal coaction.
Consequently, the functor $B \otimes - \otimes B$ serves as a left adjoint of the forgetful functor from ${\M^H}$ to ${_B\!\M_B^H}$, established by the existence of the following natural isomorphism
\begin{equation}\label{B-B-^H-adj}
	\Hom_{\M^H}(V, M) \To \Hom_{_B\!\M_B^H}(B \otimes V \otimes B, M), \;\; f \mapsto \big( b \otimes v \otimes b' \mapsto bf(v)b' \big)
\end{equation}
for any $V \in {\M^H}$ and $M \in {_B\!\M_B^H}$.

%

\begin{lem}\label{type-FP-Hopf-modules}
	Let $M$ be a Hopf module in ${_B\mathcal{M}_B^H}$.
	
	(1) Then $M$ is finitely generated as a $B^e$-module if and only if there is a finite-dimensional $H$-comodule $V$ with a surjective morphism $\pi: B \otimes V \otimes B \to M$ in $_B\mathcal{M}_B^H$.
	
	(2) If $M$ is a $B^e$-module of type $\mathbf{FP}_n$, then there exists an exact sequence
	\[ \cdots \To B \otimes V_i \otimes B \To \cdots \To B \otimes V_0 \otimes B \To M \To 0 \;\; \text{ in } {_B\mathcal{M}_B^H}, \]
	where each $V_i$ is an $H$-comodules for $i \geq 0$ and $V_i$ is finite-dimensional for $i \leq n$.
\end{lem}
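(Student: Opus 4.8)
The plan is to exploit the adjunction \eqref{B-B-^H-adj} between $B \otimes - \otimes B \colon \M^H \to {_B\!\M_B^H}$ and the forgetful functor, together with the fact (Lemma~\ref{proj-Hopf-bimod-lem}, applicable since $A = B^{co H} = \kk \subseteq B$ is faithfully flat $H$-Galois) that $B \otimes V \otimes B$ is a projective object of ${_B\!\M_B^H}$ for every $H$-comodule $V$, and that these objects generate ${_B\!\M_B^H}$. For part (1), the ``if'' direction is immediate: $B \otimes V \otimes B$ is finitely generated as a $B^e$-module when $\dim V < \infty$ (it is a quotient of $B^e \otimes V \cong (B^e)^{\dim V}$), so any Hopf-module quotient of it is finitely generated over $B^e$. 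For the ``only if'' direction, suppose $M$ is generated over $B^e$ by finitely many elements $m_1, \dots, m_k$. Each $m_j$ lies in a finite-dimensional $H$-subcomodule $W_j$ of $M$ (comodules are locally finite), and $V := \sum_j W_j$ is then a finite-dimensional $H$-subcomodule of $M$ containing all the generators. The inclusion $V \hookrightarrow M$ in $\M^H$ corresponds under \eqref{B-B-^H-adj} to a morphism $\pi \colon B \otimes V \otimes B \to M$ in ${_B\!\M_B^H}$ sending $b \otimes v \otimes b' \mapsto bvb'$; since the image contains $1 \cdot m_j \cdot 1 = m_j$ for all $j$ and is a $B^e$-submodule, $\pi$ is surjective.

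For part (2), I would argue by induction on $n$, building the resolution one step at a time while tracking two facts simultaneously: that the partial resolution consists of objects $B \otimes V_i \otimes B$ in ${_B\!\M_B^H}$, and that the syzygies are $B^e$-modules of the appropriate $\mathbf{FP}$-type. The base case $n = 0$ is part~(1): choose a finite-dimensional $H$-comodule $V_0$ and a surjection $\pi_0 \colon B \otimes V_0 \otimes B \to M$ in ${_B\!\M_B^H}$. Let $M_1 := \ker \pi_0$, taken in ${_B\!\M_B^H}$; since the forgetful functor to $B^e$-modules is exact, $M_1$ is also the kernel of $\pi_0$ as $B^e$-modules, and by Schanuel's lemma (or directly, comparing with the standard fact that $M$ of type $\mathbf{FP}_n$ has an $n$-th syzygy of type $\mathbf{FP}_0$ against the finitely generated projective $B \otimes V_0 \otimes B$) $M_1$ is a $B^e$-module of type $\mathbf{FP}_{n-1}$. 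Applying the inductive hypothesis to $M_1$ inside ${_B\!\M_B^H}$ produces the tail $\cdots \to B \otimes V_1 \otimes B \to M_1 \to 0$ with $V_i$ finite-dimensional for $i \le n-1$; splicing this onto $B \otimes V_0 \otimes B \to M \to 0$ via $M_1 \hookrightarrow B \otimes V_0 \otimes B$ yields the desired complex, reindexed so that $V_i$ is finite-dimensional for $i \le n$. For the portion beyond degree $n$, one may simply continue using part~(1) applied to each successive syzygy in ${_B\!\M_B^H}$ without the finite-dimensionality constraint — every object of ${_B\!\M_B^H}$ admits a surjection from some $B \otimes V \otimes B$ because one can take $V$ to be the whole underlying comodule — so the complex can be extended indefinitely to the left.

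The one point requiring a little care, and the place I expect the argument to be most delicate, is the interplay between the two notions of ``finiteness'': the statement hypothesizes that $M$ is of type $\mathbf{FP}_n$ \emph{as a $B^e$-module}, which a priori involves projective resolutions by arbitrary (not Hopf-module) projectives, whereas the conclusion is phrased entirely inside ${_B\!\M_B^H}$. The reconciliation relies on Lemma~\ref{proj-Hopf-bimodule-is-proj}(1) — projectives in ${_B\!\M_B^H}$ are projective over $B^e$ — so that a resolution built in ${_B\!\M_B^H}$ is genuinely a $B^e$-projective resolution, together with the general homological fact that the type $\mathbf{FP}_n$ of a module can be tested against \emph{any} resolution whose first $n$ terms are finitely generated projective (equivalently, syzygies of an $\mathbf{FP}_n$-module against finitely generated projectives are again finitely presented up to the appropriate level). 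Invoking this standard fact — for instance in the form of Lemma~\ref{proj-dim-pseudo-coherent-mod}'s ambient framework or \cite[Chapter VIII]{Bro1982} — lets the induction go through cleanly.
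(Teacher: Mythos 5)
Your proposal is correct and follows essentially the same route as the paper: part (1) via the Finiteness Theorem for comodules (local finiteness) combined with the adjunction \eqref{B-B-^H-adj}, and part (2) by induction using Schanuel's lemma on the kernel of the surjection from $B \otimes V_0 \otimes B$, which is finitely generated free over $B^e$. The paper's own proof is just a terser version of the same argument, and your extra care about testing the $\mathbf{FP}_n$ condition against the Hopf-bimodule resolution is a valid (if routine) elaboration of what the paper leaves implicit.
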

\begin{proof}
	(1) Let $\{x_1,\dots,x_m\}$ be a set generators of $_{B^e}M$. 
	The Finiteness Theorem \cite[5.1.1]{Mon1993} guarantees the existence of a finite-dimensional $H$-subcomodule $V$ of $M$ that includes $\{x_1, \dots, x_m\}$.
	Consequently, there is a surjective morphism $\pi: B \otimes V \otimes B \twoheadrightarrow X, \; b \otimes v \otimes b' \mapsto bvb'$ in $_B\mathcal{M}^H_B$.
	
	(2) From (1), there exists a finite-dimensional  left $H$-comodule $V_0$ with a surjective morphism $\pi : B \otimes V_0 \otimes B \twoheadrightarrow M$ in $_B\mathcal{M}_B^H$.
	Schanuel’s lemma implies that the kernel of $\pi$ is a $B^e$-module of type $\mathbf{FP}_{n-1}$.
	The desired conclusion then follows by induction.
\end{proof}

The subsequent lemma is a well-established fact, and we present a concise proof for it.
\begin{lem}\label{fg-equi-lem}
	Let $M$ be a module over a ring $R$. If $\underrightarrow{\lim} \Hom_R(M, N_i) = 0$ for any direct system $\{ N_i \}$ with $\underrightarrow{\lim} N_i=0$, then $M$ is finitely generated.
\end{lem}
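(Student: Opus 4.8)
The statement is the standard characterization of finite generation via commuting with directed colimits (the ``$\Leftarrow$'' direction of the well-known equivalence for $\mathbf{FP}_0$). I would prove the contrapositive: assuming $M$ is not finitely generated, I will exhibit a direct system $\{N_i\}$ with $\varinjlim N_i = 0$ but $\varinjlim \Hom_R(M,N_i) \neq 0$.

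First I would set up the system. Since $M$ is not finitely generated, the set $I$ of finitely generated submodules of $M$, ordered by inclusion, is directed and does not contain $M$ itself; moreover $\varinjlim_{i\in I} M_i = \bigcup_{i\in I} M_i = M$, where $M_i$ ranges over the finitely generated submodules. For each $i\in I$ set $N_i := M/M_i$, with the obvious transition maps $N_i \to N_j$ for $M_i \subseteq M_j$. Then $\varinjlim_{i\in I} N_i = M/\bigl(\bigcup_i M_i\bigr) = M/M = 0$, since filtered colimits are exact and commute with the quotient. So $\{N_i\}_{i\in I}$ is a legitimate test system with vanishing colimit.

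Next I would show $\varinjlim_i \Hom_R(M, N_i) \neq 0$ by producing a compatible family of morphisms $f_i: M \to N_i$ that is not eventually zero. Take $f_i$ to be the canonical projection $M \twoheadrightarrow M/M_i = N_i$; these are manifestly compatible with the transition maps, hence define an element $\varphi := (f_i)_i$ of the colimit. To see $\varphi \neq 0$, recall that an element of $\varinjlim_i \Hom_R(M,N_i)$ is zero iff some representative $f_i$ becomes zero after composing with a transition map $N_i \to N_j$; but $f_i$ composed with $N_i \to N_j$ is exactly the projection $M \to M/M_j$, which is surjective and nonzero because $M_j \neq M$ for every $j \in I$. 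Hence no $f_i$ dies in the colimit, so $\varphi \neq 0$, contradicting the hypothesis. Therefore $M$ must be finitely generated.

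The only mild subtlety — and the step I'd treat most carefully — is the computation of the colimit of the $\Hom$-system and the criterion for an element to vanish there: one must use that $I$ is a directed poset so that $\varinjlim_i \Hom_R(M,N_i)$ is computed as $\bigsqcup_i \Hom_R(M,N_i)$ modulo the relation that identifies $f_i$ with its image in $\Hom_R(M,N_j)$, and that consequently $\varphi = 0$ forces some single $f_i$ to map to $0 \in \Hom_R(M,N_j)$ for some $j \geq i$. Everything else (directedness of the poset of finitely generated submodules, exactness of filtered colimits giving $\varinjlim N_i = 0$) is routine. No finiteness or coherence input on $R$ or on the $N_i$ is needed; this direction is purely formal.
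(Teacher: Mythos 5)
Your proof is correct and is essentially the paper's own argument: both use the direct system $\{M/M'\}$ indexed by the finitely generated submodules $M'$ of $M$, observe that its colimit vanishes, and note that the compatible family of canonical projections $M \twoheadrightarrow M/M'$ represents an element of $\varinjlim \Hom_R(M, M/M')$ that can only vanish if some projection $M \to M/M'$ is already zero, i.e.\ $M = M'$. The only difference is that you phrase it as a contrapositive while the paper argues directly; the content is identical.
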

\begin{proof}
	Consider the direct system $\{ M/M' \}$ indexed by the finitely generated submodules $M'$ of $M$.
	We have $\underrightarrow{\lim} M/M' = 0$, which implies $\underrightarrow{\lim}\Hom_R(M, M/M') = 0$.
	Consequently, there exists a finitely generated submodule $M'$ for which the induced map $\overline{\id_M}$ is the zero map in $\Hom_R(M, M/M')$, where $\id_M$ denotes the identity map on $M$.
	Therefore, we conclude that $M = M'$, establishing that $M$ is a finitely generated $R$-module.
\end{proof}

\begin{lem}\label{B-otimes-V-otimes-B-fg-proj}
	If $V$ is a finite dimensional $H$-comodule, then $(B \otimes V \otimes B)^{co H}$ is a finitely generated projective $L$-module.
\end{lem}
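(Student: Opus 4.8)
The plan is to pass through the category equivalence $F\colon {}_B\!\M_B^H \to {}_L\!\M$, $M \mapsto M^{co H}$ of Theorem \ref{ff-Hopf-Galois-ext-Hopf-bimod-cat-str-thm} (with $R = B$, $A = \kk$), under which $(B \otimes V \otimes B)^{co H}$ is the image of the Hopf bimodule $B \otimes V \otimes B$. Since an equivalence of categories preserves both the finitely generated and the projective properties among objects, it suffices to show that $B \otimes V \otimes B$ is a finitely generated projective object of ${}_B\!\M_B^H$ whenever $\dim_{\kk} V < \infty$.

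Projectivity is essentially already done: Lemma \ref{proj-Hopf-bimod-lem} (applied with $R = B$) shows $B \otimes V \otimes B$ is a projective object of ${}_B\!\M_B^H$ for any $H$-comodule $V$, using that $B$ is an injective $H$-comodule (Theorem \ref{faithful-flat-Hopf-Galois-extension-thm} and Lemma \ref{Hopf-module-injective}) so that every short exact sequence in ${}_B\!\M_B^H$ splits as $H$-comodules, combined with the adjunction \eqref{adjoint-for-Hopf-bimodule'}. So the remaining point is that $B \otimes V \otimes B$ is a finitely generated object of ${}_B\!\M_B^H$; equivalently, by the forgetful functor, that it is finitely generated as a $B^e$-module. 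But $B \otimes V \otimes B$ is a $B^e$-module generated by the finite-dimensional subspace $1_B \otimes V \otimes 1_B$, hence is generated by finitely many elements (a $\kk$-basis of $V$). Therefore $F(B \otimes V \otimes B) = (B \otimes V \otimes B)^{co H}$ is a finitely generated projective left $L$-module.

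The only step that requires a moment's care is the claim that the equivalence $F$ (and hence $G = - \otimes_A B$) carries "finitely generated object" to "finitely generated $L$-module": abstractly, a categorical equivalence preserves the property that an object is finitely generated in the sense of admitting an epimorphism from a "finite" projective generator, but here "finitely generated $L$-module" means generated by finitely many elements in the module-theoretic sense. This is reconciled by Lemma \ref{type-FP-Hopf-modules}(1): $M \in {}_B\!\M_B^H$ is finitely generated as a $B^e$-module if and only if there is a finite-dimensional $H$-comodule $W$ with a surjection $B \otimes W \otimes B \twoheadrightarrow M$ in ${}_B\!\M_B^H$; applying $F$ and noting $F(B \otimes W \otimes B) = (B \otimes W \otimes B)^{co H}$, which under $A = \kk$ is a quotient of the free module $L \otimes W$ (since $(B\otimes W\otimes B)^{coH} \cong L \otimes W$ as left $L$-modules via the coaction, $W$ finite-dimensional), one gets a surjection from a finitely generated free $L$-module onto $F(M)$. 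I expect this bookkeeping between the categorical and module-theoretic notions of finite generation to be the main (though minor) obstacle; everything else is a direct appeal to Lemma \ref{proj-Hopf-bimod-lem} and Theorem \ref{ff-Hopf-Galois-ext-Hopf-bimod-cat-str-thm}.
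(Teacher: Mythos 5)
Your projectivity argument coincides with the paper's: $B \otimes V \otimes B$ is a projective object of ${_B\!\M_B^H}$ by Lemma \ref{proj-Hopf-bimod-lem}, and the equivalence $(-)^{co H}$ of Theorem \ref{ff-Hopf-Galois-ext-Hopf-bimod-cat-str-thm} carries it to a projective $L$-module. That half is fine.

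The finite-generation half has a genuine gap. Your entire argument rests on the assertion that $(B \otimes W \otimes B)^{co H}$ is (a quotient of) the finitely generated free module $L \otimes W$, justified only by the phrase ``via the coaction.'' This is not a formal consequence of the definitions: the obvious candidate $L \otimes W \to (B\otimes W\otimes B)^{co H}$, $l \otimes w \mapsto \sum l^1 \otimes w \otimes l^2$, does not land in the coinvariants, since by \eqref{Hopf-bimodule-str-on-M-otimes-H'} the coaction produces $\sum l^1_0 \otimes w_0 \otimes l^2_0 \otimes l^1_1 w_1 l^2_1$, and the $w_1$ sandwiched between $l^1_1$ and $l^2_1$ cannot be discarded. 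Worse, for $W=V$ the assertion \emph{is} the statement being proved (and more), so the detour through Lemma \ref{type-FP-Hopf-modules}(1) --- which merely replaces $V$ by another finite-dimensional comodule $W$ --- makes the argument circular as written. The assertion is in fact true and fillable: since $A=\kk$, Theorem \ref{faithful-flat-Hopf-Galois-extension-thm}(2) gives an isomorphism $W \otimes B \cong W' \otimes B$ in $\M^H_B$, where $W' = (W\otimes B)^{co H} = W \Box_H B$ is a vector space of dimension $\dim W$ carrying the trivial coaction; tensoring with $B$ on the left yields $B \otimes W \otimes B \cong B \otimes W' \otimes B$ in ${_B\!\M_B^H}$, and the coinvariants of the latter are visibly $L \otimes W' \cong L^{\oplus \dim W}$. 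But this step must be supplied --- and once it is, it already gives the whole lemma, making the separate projectivity argument redundant. The paper avoids the issue entirely: it proves finite generation by showing that $\Hom_L\big((B\otimes V\otimes B)^{co H}, -\big)$ commutes with direct limits (Lemma \ref{fg-equi-lem}), reducing via the adjunction \eqref{B-B-^H-adj} to $\Hom_{\M^H}(V,-)$, using that every object of $\M^H_B$ is an injective $H$-comodule to split the relevant presentations, and then invoking the finite-dimensionality of $V$.
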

\begin{proof}
	
	Recall that the functor $(-)^{co H}: {_B\!\M_B^H} \to {_L\!\M}$ establishes an equivalence.
	Given that $B \otimes V \otimes B$ is a projective object in ${_B\!\M_B^H}$ by Lemma \ref{proj-Hopf-bimod-lem}, $(B \otimes V \otimes B)^{co H}$ is a projective $L$-module.
	
	To demonstrate that $(B \otimes V \otimes B)^{co H}$ is finitely generated, we utilize Lemma \ref{fg-equi-lem}, which asserts that it suffices to show that the functor $\Hom_{L}((B \otimes V \otimes B)^{co H}, -)$ commutes with direct limits. In other words, for any direct system $\{ N_i \}$ of $L$-modules, there must exist a natural isomorphism
	\[ \Hom_{L}((B \otimes V \otimes B)^{co H}, \underrightarrow{\lim} N_i) \cong \underrightarrow{\lim} \Hom_{L}((B \otimes V \otimes B)^{co H}, N_i). \]
	By Theorem \ref{ff-Hopf-Galois-ext-Hopf-bimod-cat-str-thm}, we establish the following isomorphisms
	\begin{align*}
		\Hom_{L}((B \otimes V \otimes B)^{co H}, \underrightarrow{\lim} N_i) \cong & \Hom_{_B\!\M_B^H}((B \otimes V \otimes B), (\underrightarrow{\lim} N_i) \otimes B) \\
		\cong & \Hom_{_B\!\M_B^H}(B \otimes V \otimes B, \underrightarrow{\lim} (N_i\otimes B)) \\
		\stackrel{\eqref{B-B-^H-adj}}{\cong} & \Hom_{\M^H}(V, \underrightarrow{\lim} (N_i\otimes B)).
	\end{align*}
	By the definition of direct limit, we have an exact sequence
	\[ \oplus (N_i \otimes B) \To \oplus (N_i \otimes B) \To \underrightarrow{\lim} (N_i\otimes B) \To 0\]
	in ${_B\!\M_B^H}$, which is split exact in $\M^H$ by Lemma \ref{Hopf-module-injective}.
	Hence we have a commutative diagram with exact rows:
	\[ \xymatrix{\Hom_{\M^H}(V, \bigoplus (N_i \otimes B)) \ar[r] \ar[d]^{\pi} & \Hom_{\M^H}(V, \bigoplus (N_i \otimes B)) \ar[r] \ar[d]^{\pi} & \Hom_{\M^H}(V, \underrightarrow{\lim} (N_i\otimes B)) \ar[r] \ar[d] & 0 \\
		\bigoplus \Hom_{\M^H}(V, N_i \otimes B) \ar[r] & \bigoplus \Hom_{\M^H}(V, N_i \otimes B) \ar[r] & \underrightarrow{\lim}\Hom_{\M^H}(V,  N_i\otimes B) \ar[r] & 0 \\}. \]
	Since $V$ is finite dimensional, the canonical map $\pi$ is bijective.
	This yields an isomorphism between $\Hom_{\M^H}(V, \underrightarrow{\lim} (N_i\otimes B))$ and $\underrightarrow{\lim}  \Hom_{\M^H}(V, N_i\otimes B)$.
	Therefore, we obtain the following isomorphisms,
	\begin{align*}
		\Hom_{L}( (B \otimes V \otimes B)^{co H}, \underrightarrow{\lim} N_i) \cong & \underrightarrow{\lim} \Hom_{\M^H}(V, N_i\otimes B) \\
		\stackrel{\eqref{B-B-^H-adj}}{\cong} & \underrightarrow{\lim} \Hom_{_B\!\M_B^H}(B \otimes V \otimes B, N_i\otimes B) \\
		\cong & \underrightarrow{\lim} \Hom_{L}((B \otimes V \otimes B)^{co H}, N_i).
	\end{align*}
	
	In conclusion, $(B \otimes V \otimes B)^{co H}$ is a finitely generated projective $L$-module.
\end{proof}


Recall that by \eqref{R-mod-str} and \eqref{L-comod-on-B}, for any left $L$-module $V$, $V \otimes B$ acquires a left $B^e$-bimodule structure defined by
$$(b' \otimes b'')(v \otimes b) = \sum\limits_{(b')} b'_{-1}v \otimes b'_0bb''.$$

\begin{lem}\label{fg-proj-H-B^e-mod}
	Let $V$ be a left $L$-module.
	
	(1) If $V$ is projective, then $V \otimes B$ is a projective $B^e$-module.
	
	(2) If $V$ is finitely generated, then $V \otimes B$ is also finitely generated as a $B^e$-module.
\end{lem}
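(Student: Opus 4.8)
The plan is to deduce both statements from the category equivalence $G := -\otimes_A B\colon {_L\!\M}\to {_B\!\M^H_B}$ supplied by Theorem~\ref{ff-Hopf-Galois-ext-Hopf-bimod-cat-str-thm} with $R=B$ (so $A=\kk$), together with the observation that composing $G$ with the forgetful functor ${_B\!\M^H_B}\to {_{B^e}\!\M}$ recovers exactly the $B^e$-module structure on $V\otimes B=V\otimes_A B$ displayed just before the lemma; this last point is a direct comparison of \eqref{R-mod-str} with \eqref{L-comod-on-B}.

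For part (1): when $_LV$ is projective, $V\otimes B=G(V)$ is a projective object of ${_B\!\M^H_B}$ because $G$ is an equivalence, and then Lemma~\ref{proj-Hopf-bimodule-is-proj}(1), applied with $R=T=B$ so that $R\otimes T^{op}=B^e$, shows that $V\otimes B$ is a projective $B^e$-module. This needs no computation.

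For part (2): I reduce to the case $V=L$. A surjection $L^{\oplus n}\twoheadrightarrow V$ of left $L$-modules, tensored over $\kk$ with $B$, gives a $B^e$-linear surjection $(L\otimes B)^{\oplus n}\twoheadrightarrow V\otimes B$; so it is enough to prove that $L\otimes B$ is finitely generated as a $B^e$-module. The key input is that, in the setting of Section~4, $B$ is an $L$-$H$-biGalois extension of $\kk$: in particular the left $L$-Galois canonical map $B\otimes_{\kk}B\to L\otimes B$, $b\otimes b'\mapsto \sum b_{-1}\otimes b_0 b'$ (with the $L$-coaction \eqref{L-comod-on-B}), is bijective, hence surjective. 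On the other hand, evaluating the displayed $B^e$-action at $1_L\otimes 1_B$ gives $(b\otimes b')(1_L\otimes 1_B)=\sum b_{-1}\otimes b_0 b'$, so $B^e\cdot(1_L\otimes 1_B)$ is precisely the image of that canonical map, namely all of $L\otimes B$. Thus $L\otimes B$ is a cyclic $B^e$-module, whence $(L\otimes B)^{\oplus n}$ and its quotient $V\otimes B$ are finitely generated over $B^e$.

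The one place to be careful --- which I view as the crux rather than a genuine obstacle --- is identifying $L\otimes B$ with the image of the left Galois canonical map and thereby seeing that $1_L\otimes 1_B$ generates it over $B^e$; this uses the biGalois structure of $B$ from Section~4, while the remainder is formal bookkeeping with the equivalence $G$ and Lemma~\ref{proj-Hopf-bimodule-is-proj}.
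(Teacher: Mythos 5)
Your proof is correct. Part (2) is in essence the paper's own argument: the paper observes that the left $L$-Galois map $\beta'\colon B\otimes B\to L\otimes B$ is an isomorphism of $B^e$-modules and concludes from a surjection $L^{\oplus n}\twoheadrightarrow V$ that $(B\otimes B)^{\oplus n}\cong L^{\oplus n}\otimes B\twoheadrightarrow V\otimes B$; your version of this, identifying $B^e\cdot(1_L\otimes 1_B)$ with the image of $\beta'$ and deducing that $L\otimes B$ is cyclic, is the same fact viewed from the generator's side (and in fact $\beta'$ being bijective shows $L\otimes B$ is \emph{free} of rank one over $B^e$, which you do not need). For part (1) you diverge from the paper: the paper again uses $\beta'$ directly, writing a projective $V$ as a direct summand of copies of $L$ so that $V\otimes B$ is a direct summand of copies of $B\otimes B\cong B^e$; you instead route through the category equivalence $-\otimes_{\kk} B\colon {_L\!\M}\to{_B\!\M^H_B}$ and Lemma~\ref{proj-Hopf-bimodule-is-proj}(1). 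Both are valid; your route for (1) is the one the paper itself uses in Lemmas~\ref{Ext_L-Ext_R-otimes-B} and \ref{pd-V-otimes-B-leq-V}, and it correctly relies on the identification (which you flag) of the displayed $B^e$-action on $V\otimes B$ with the one produced by Theorem~\ref{ff-Hopf-Galois-ext-Hopf-bimod-cat-str-thm} via \eqref{R-mod-str} and \eqref{L-comod-on-B}, whereas the paper's version of (1) is marginally more self-contained in that it needs only the single $B^e$-isomorphism $\beta'$ for both halves of the lemma.
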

\begin{proof}
	(1)
	If $V$ is projective, then it is a direct summand of some copies of $L$.
	Given that the Galois map $\beta': B \otimes B \to L \otimes B$ is an isomorphism of $B^e$-modules, $V \otimes B$ is also a projective $B^e$-module.
	
	(2)
	If $V$ is finitely generated, then there exists an integer $n$ with a surjective morphism $L^{\oplus n} \twoheadrightarrow V$.
	Consequently, $(B \otimes B)^{\oplus n} \cong L^{\oplus n}\otimes B \twoheadrightarrow V \otimes B$ is a surjective morphism of $B^e$-modules, confirming that $V \otimes B$ is finitely generated.
\end{proof}

\begin{prop}\label{H-B-FP-type}
	Let $H$ be a Hopf algebra with a bijective antipode, and $B$ be an $H$-Galois extension of $\kk$.
	For any $0 \leq n \leq \infty$, $H$ is of type $\mathbf{FP}_{n}$ if and only if $B$ is a $B^e$-module of type $\mathbf{FP}_{n}$.
\end{prop}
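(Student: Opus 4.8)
The plan is to translate the defining property of $H$ being of type $\mathbf{FP}_n$ (existence of a resolution of the trivial module $\kk$ by finitely generated projective $H$-modules, finitely many in the $\mathbf{FP}_n$ case) into the corresponding property of $B$ as a $B^e$-module, passing through the category ${_B\!\M^H_B}$ and the equivalence with ${_L\!\M}$ given by Theorem \ref{ff-Hopf-Galois-ext-Hopf-bimod-cat-str-thm} (with $R=B$, $A=\kk$, so $L=L(B,B,H)$ is a Hopf algebra). The key bridge is that, since $A=\kk$, an $H$-comodule $V$ is the same thing as an $L$-module via the coaction $\delta$ of \eqref{L-comod-on-B}, and one should check that under the category equivalence the free Hopf bimodule $B\otimes V\otimes B$ corresponds to $V$ regarded as an $L$-module (more precisely $(B\otimes V\otimes B)^{co H}\cong V$ as $L$-modules, and dually $V\otimes_\kk B$ with the $B^e$-structure recorded just before Lemma \ref{fg-proj-H-B^e-mod} is the image of $V$ under $-\otimes_A B$). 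This is essentially the content of Lemmas \ref{B-otimes-V-otimes-B-fg-proj} and \ref{fg-proj-H-B^e-mod} combined with Lemma \ref{type-FP-Hopf-modules}.

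First I would prove the ``only if'' direction. Assume $H$ is of type $\mathbf{FP}_n$, so there is a resolution $\cdots\to P_i\to\cdots\to P_0\to\kk\to 0$ of $\kk$ by projective $H$-modules with $P_0,\dots,P_n$ finitely generated; each $P_i$ can be viewed as an $L$-module (i.e.\ an $H$-comodule, hence finite-dimensional for $i\le n$ by the Finiteness Theorem after decomposing into finitely generated pieces — or just take the $P_i$ themselves to be $H$-comodules and note finite generation over $H$ forces finite dimensionality of the relevant comodule). Applying the exact equivalence $-\otimes_\kk B:{_L\!\M}\to{_B\!\M^H_B}$ (exact because $B$ is $\kk$-flat) produces an exact complex of Hopf bimodules; by Lemma \ref{proj-Hopf-bimod-lem} the terms coming from comodules are projective objects in ${_B\!\M^H_B}$, hence projective $B^e$-modules by Lemma \ref{proj-Hopf-bimodule-is-proj}(1); and by Lemma \ref{fg-proj-H-B^e-mod}(2) the finitely generated ones stay finitely generated over $B^e$. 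Since the equivalence sends $\kk$ (the trivial $L$-module) to $\kk\otimes_\kk B=B$ with its standard $B^e$-structure, this is exactly a projective resolution of $_{B^e}B$ of type $\mathbf{FP}_n$. For the ``if'' direction, run the argument backwards: given a type $\mathbf{FP}_n$ resolution of $_{B^e}B$, first replace it (using Lemma \ref{type-FP-Hopf-modules}(2), whose proof via Schanuel's lemma handles the finite-generation bookkeeping) by an exact complex $\cdots\to B\otimes V_i\otimes B\to\cdots\to B\otimes V_0\otimes B\to B\to 0$ in ${_B\!\M^H_B}$ with $V_i$ an $H$-comodule, finite-dimensional for $i\le n$; applying the quasi-inverse $(-)^{co H}$ (exact, since it is part of an equivalence) yields an exact complex of $L$-modules whose terms $(B\otimes V_i\otimes B)^{co H}$ are projective by Lemma \ref{B-otimes-V-otimes-B-fg-proj}, finitely generated for $i\le n$ by the same lemma, and which terminates in $B^{co H}=\kk$. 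Restricting scalars along $\vep:L\to\kk$ — or rather observing that $\kk$ here is precisely the trivial $L$-module — gives a type $\mathbf{FP}_n$ resolution of $\kk$ over $L$; then one invokes the symmetry of the biGalois situation ($B$ is an $H$-$H$... no: $B$ is an $L$-$H$-biGalois extension of $\kk$, and by Schauenburg's theorem one recovers $H$ from $L$ by the same construction applied to $B^{op}$, or simply notes $L\cong H$-comodule-categorically suffices here since ``type $\mathbf{FP}_n$'' of the trivial module only depends on the data at hand), so $H$ is of type $\mathbf{FP}_n$.

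I expect the main obstacle to be the bookkeeping at the start of the ``only if'' direction: relating a finitely generated projective $H$-\emph{module} to a finite-dimensional $H$-\emph{comodule} so that Lemma \ref{proj-Hopf-bimod-lem} applies. A projective $H$-module is not literally an $H$-comodule, so one must instead take the resolution of $\kk$ and, at each stage, use that the syzygies are finitely generated to find finite-dimensional $H$-\emph{sub}comodules generating them — i.e.\ build the resolution by comodules from the start, exactly as in Lemma \ref{type-FP-Hopf-modules}(2) but now on the $H$-side. Concretely: the trivial module $\kk$ is a $1$-dimensional $H$-comodule; a finite-dimensional comodule $V_0$ surjecting onto $\kk$ as $H$-modules exists (take $V_0=\kk$); inductively, if the $i$-th syzygy $\Omega^i$ is finitely generated over $H$, the Finiteness Theorem gives a finite-dimensional subcomodule $W\subseteq\Omega^i$ containing a generating set, and $H\otimes_\kk W\twoheadrightarrow\Omega^i$... but $H\otimes_\kk W$ is not obviously the right object either. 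The clean fix, which I would adopt, is to work the whole ``only if'' direction through the $L$-side too: by the biGalois symmetry $H$ is of type $\mathbf{FP}_n$ iff $L$ is of type $\mathbf{FP}_n$ (this is the $R=\kk$ degenerate form of the set-up, or one proves it by the mirror-image argument), so it suffices to prove the Proposition with $H$ replaced by $L$, i.e.\ ``$L$ is $\mathbf{FP}_n$ $\iff$ $B$ is $\mathbf{FP}_n$ over $B^e$'', and there both directions are the direct application of the equivalence $-\otimes_\kk B \leftrightarrow (-)^{co H}$ together with Lemmas \ref{type-FP-Hopf-modules}, \ref{proj-Hopf-bimod-lem}, \ref{proj-Hopf-bimodule-is-proj}, \ref{B-otimes-V-otimes-B-fg-proj}, \ref{fg-proj-H-B^e-mod} as outlined above, with no module/comodule mismatch. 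The remaining point — that $L$ and $H$ share the type $\mathbf{FP}_n$ property — should follow from Schauenburg's biGalois theorem (Section: $B$ is $L$-$H$-biGalois over $\kk$) applied symmetrically, or can be cited; this symmetry is the conceptual crux, and everything else is an exact-functor diagram chase.
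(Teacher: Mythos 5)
Your ``if'' direction follows the paper's route (Lemma \ref{type-FP-Hopf-modules}, the equivalence $(-)^{co H}$, Lemma \ref{B-otimes-V-otimes-B-fg-proj}), but the ``only if'' direction as you finally adopt it has a genuine gap. You start with the correct direct argument --- tensor a finitely generated projective resolution of $\kk$ over $H$ with $B$ --- but then talk yourself out of it because of a perceived module/comodule mismatch, and replace it with a reduction to the claim ``$H$ is of type $\mathbf{FP}_n$ iff $L$ is of type $\mathbf{FP}_n$,'' to be justified by ``biGalois symmetry.'' That claim cannot be cited here: it is precisely the content of Theorem \ref{mMT-preserve-FP_infty}, which the paper \emph{deduces from} this proposition. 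Type $\mathbf{FP}_n$ is a property of the \emph{module} category, whereas the biGalois datum only provides a monoidal equivalence of \emph{comodule} categories; there is no formal symmetry transferring the property from $H$ to $L$, and establishing that transfer is the whole point of the proposition. Your proposed fix is therefore circular.

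The obstacle you flag in the ``only if'' direction is not actually there. The paper uses Lemma \ref{fg-proj-H-B^e-mod} (together with the Galois isomorphism $B\otimes B\cong L\otimes B$, resp.\ $B\otimes B\cong B\otimes H$ of Lemma \ref{B-otimes-B=B-otimes-H}), which is a statement about \emph{modules}, not comodules: for a module $V$ over the Hopf algebra, $V\otimes B$ carries a $B^e$-module structure, is $B^e$-projective when $V$ is projective (a free module goes to a direct sum of copies of $B\otimes B = B^e$ under the Galois map), and is finitely generated when $V$ is. So one simply applies $-\otimes B$ to the resolution $P_\bullet\to\kk$; no finite-dimensional comodule approximation and no appeal to Lemma \ref{proj-Hopf-bimod-lem} is needed in this direction --- the comodule bookkeeping of Lemma \ref{type-FP-Hopf-modules} is only required going the other way, from $B^e$ back to the Hopf algebra. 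Your observation that the ``if'' direction naturally lands on $L=L(B,B,H)$ rather than on $H$ is a fair point about the paper's terseness, but the remedy is to rerun that argument for $B$ as a right $L^{cop}$-Galois extension and identify the resulting bialgebroid with $H$ via Schauenburg's biGalois theory, not to assume the $\mathbf{FP}_n$ equivalence of $H$ and $L$ at the outset.
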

\begin{proof}
	$``\Rightarrow"$
	Given that $H$ is of type $\mathbf{FP}_{n}$, there exists a projective resolution $P_{\bullet}$ of $\kk_H$ with each $P_i$ is finitely generated for $i \leq n$.
	By Lemma \ref{fg-proj-H-B^e-mod}, tensoring $P_{\bullet}$ with $B$ yields a projective resolution of the $B^e$-module $B = \kk \otimes B$, where each $P_i\otimes B$ is finitely generated for $i \leq n$.
	Therefore, $B$ is a $B^e$-module of type $\mathbf{FP}_{n}$.
	
	$``\Leftarrow"$
	Since $B$ is a $B^e$-module of type $\mathbf{FP}_{n}$, Lemma \ref{type-FP-Hopf-modules} guarantees the existence of an exact sequence in the category ${_B\mathcal{M}_B^H}$ of the form
	\[ \cdots \To B \otimes V_i \otimes B \To \cdots \To B \otimes V_0 \otimes B \To B \To 0 \;\; \text{ in } {_B\mathcal{M}_B^H}, \]
	where each $V_i$ is an $H$-comodules for $i \geq 0$ and $V_i$ is finite-dimensional for $i \leq n$.
	Applying the functor of $H$-coinvariants $(-)^{co H}: {_B\mathcal{M}_B^H} \to {_L\M}$ yields an exact sequence 
	\[ \cdots \To (B \otimes V_i \otimes B)^{co H} \To \cdots \To (B \otimes V_0 \otimes B)^{co H} \To B^{co H} \To 0 \]
	in ${_L\M}$.
	Here, $B^{co H} = \kk$ is the trivial $L$-module.
	By Lemma \ref{B-otimes-V-otimes-B-fg-proj}, $(B \otimes V_i \otimes B)^{co H}$ is always finitely generated as an $L$-module for all $i \geq 0$.
	Thus, $H$ is of type $\mathbf{FP}_{n}$.
\end{proof}

\begin{thm}\label{mMT-preserve-FP_infty}
	Let $H$ and $L$ be two monoidally Morita-Takeuchi equivalent Hopf algebras with bijective antipodes.
	If $H$ is of type $\mathbf{FP}_{\infty}$, then so is $L$.
\end{thm}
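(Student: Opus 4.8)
The plan is to exploit the symmetry built into the theory of biGalois extensions and reduce the statement to Proposition~\ref{H-B-FP-type}. Recall from Schauenburg's theorem (cited in the Introduction) that since $H$ and $L$ are monoidally Morita-Takeuchi equivalent, there is an $L$-$H$-biGalois extension $B$ of $\kk$; that is, $B$ is simultaneously a right $H$-Galois extension of $\kk$ and a left $L$-Galois extension of $\kk$, and the two coactions are compatible. A left $L$-Galois extension is the same thing as a right $L^{op,cop}$-Galois extension, or more concretely one can pass to the opposite algebra $B^{op}$, which becomes a right $L$-Galois extension of $\kk$ (using that the antipode of $L$ is bijective, so $L^{op,cop}$ is again a Hopf algebra isomorphic to $L$ via the antipode). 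So we have two Galois extensions of the base field with the \emph{same} underlying algebra $B$ up to taking opposites.

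First I would invoke Proposition~\ref{H-B-FP-type} applied to the right $H$-Galois extension $\kk \subseteq B$: since $H$ is of type $\mathbf{FP}_{\infty}$, the bimodule $B$ is of type $\mathbf{FP}_{\infty}$ as a $B^e$-module. Next I would observe that the property of $B$ being of type $\mathbf{FP}_{\infty}$ as a $B^e$-module is intrinsic to the algebra $B$ and is insensitive to replacing $B$ by $B^{op}$, because $(B^{op})^e = (B^e)^{op} \cong B^e$ via the flip (noted in the Preliminaries), and under this identification the bimodule $B^{op}$ corresponds to $B$. Hence $B^{op}$ is also a $(B^{op})^e$-module of type $\mathbf{FP}_{\infty}$. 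Finally I would apply Proposition~\ref{H-B-FP-type} in the reverse direction to the right $L$-Galois extension $\kk \subseteq B^{op}$ (equivalently, the left $L$-Galois structure on $B$): since $B^{op}$ is of type $\mathbf{FP}_{\infty}$ as a bimodule, the Hopf algebra $L$ is of type $\mathbf{FP}_{\infty}$. This completes the argument.

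The step I expect to be the main obstacle is making precise the passage from a \emph{left} $L$-Galois extension to a \emph{right} Galois extension so that Proposition~\ref{H-B-FP-type} (which is stated for right $H$-Galois extensions of $\kk$) applies verbatim. One must check that the left $L$-comodule algebra structure on $B$ given by $\delta$ in \eqref{L-comod-on-B} yields, after composing with the flip and the antipode $S_L$, a genuine right $L$-comodule algebra structure on $B^{op}$ with coinvariants $\kk$ and bijective canonical map; this is standard in the biGalois literature but should be spelled out, together with the observation that the bijectivity of the antipode of $L$ is exactly what is needed here (and is available by hypothesis). Once this dictionary is in place, everything else is a direct citation of Proposition~\ref{H-B-FP-type} together with the elementary bimodule identification $(B^{op})^e \cong B^e$.
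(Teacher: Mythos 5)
Your argument is correct and takes essentially the same route as the paper: both proofs reduce the statement to two applications of Proposition~\ref{H-B-FP-type} to the $L$-$H$-biGalois object $B$ furnished by Schauenburg's theorem, using bijectivity of $S_L$ to convert the left $L$-Galois structure into a right Galois structure. The only cosmetic difference is that the paper keeps the algebra $B$ and views it as a right $L^{cop}$-Galois extension of $\kk$ (with $L^{cop}=L$ as an algebra, so the $\mathbf{FP}_{\infty}$ property is literally the same), whereas you pass to $B^{\op}$ and then identify $(B^{\op})^e$ with $B^e$ via the flip; both dictionaries are standard and the step you flag as the main obstacle is exactly the routine verification the paper also leaves implicit.
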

\begin{proof}
	Let $B$ be an $L$-$H$-biGalois extension of $\kk$.
	Since the antipode $S$ of $L$ is bijective as we assumed, 
	$L^{cop}$ is also a Hopf algebra with opposite comultiplication antipode $S^{-1}$, and that $B$ is a right $L^{cop}$-Galois extension of $\kk$.
	Then the conclusion follows from Proposition \ref{H-B-FP-type}.
\end{proof}

\subsection{AS Gorenstein property under monoidal Morita-Takeuchi equivalence}

In the subsequent portion of this section, our objective is to establish a parallel version of Theorem \ref{mMTE-gld-thm} for the scenario where Hopf algebras possess finite injective dimension, rather than finite global dimension. The ultimate goal is to substitute the AS regular condition with the AS Gorenstein condition.

Accroding to \eqref{H-mod-on-M^A}, $B \otimes B$ serves as a right $B \otimes H$-module via the action defined by
\[(x \otimes y) (b \otimes h) = \sum \kappa^1(h) x b \otimes y \kappa^2(h).\]

\begin{lem}\label{B-otimes-B=B-otimes-H}
	Let $H$ be a Hopf algebra and $B$ be a right $H$-Galois extension of $\kk$.
	Then $B \otimes B \cong B \otimes H$ as right $B \otimes H$-modules. 
\end{lem}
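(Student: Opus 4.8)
The plan is to produce an explicit isomorphism built from the translation map. Because the base ring here is $\kk$, every tensor product over $A=\kk$ is an ordinary tensor product over $\kk$; in particular $\kappa(h)=\sum\kappa^{1}(h)\otimes\kappa^{2}(h)$ is an element of $B\otimes B$, and multiplying one of its legs by an element of $B$ is unambiguous. I would define
\[
\Psi\colon B\otimes H\To B\otimes B,\qquad b\otimes h\longmapsto\sum\kappa^{1}(h)\,b\otimes\kappa^{2}(h),
\]
giving $B\otimes H$ the regular right $B\otimes H$-module structure and $B\otimes B$ the structure from the statement, and then show that $\Psi$ is an isomorphism of right $B\otimes H$-modules; this is equivalent to the asserted isomorphism $B\otimes B\cong B\otimes H$.

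The right $B\otimes H$-linearity of $\Psi$ is a short computation with \eqref{kappa(hk)}. For $b,c\in B$ and $h,k\in H$ one has
\[
\Psi(bc\otimes hk)=\sum\kappa^{1}(hk)\,bc\otimes\kappa^{2}(hk)=\sum\kappa^{1}(k)\kappa^{1}(h)\,bc\otimes\kappa^{2}(h)\kappa^{2}(k),
\]
and applying the module law $(x\otimes y)(c\otimes k)=\sum\kappa^{1}(k)\,xc\otimes y\,\kappa^{2}(k)$ with $x=\kappa^{1}(h)b$ and $y=\kappa^{2}(h)$ shows this equals $\Psi(b\otimes h)(c\otimes k)$; together with $\Psi(1\otimes1)=1\otimes1$ this makes $\Psi$ a homomorphism of right $B\otimes H$-modules.

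The point that needs care is bijectivity, and for this I would compose with the canonical Galois map $\beta\colon B\otimes B\to B\otimes H$, $u\otimes v\mapsto\sum uv_{0}\otimes v_{1}$, which is bijective by hypothesis. Applying $\beta$ to $\Psi(b\otimes h)$ produces $\sum\kappa^{1}(h)\,b\,\kappa^{2}(h)_{0}\otimes\kappa^{2}(h)_{1}$; using \eqref{kappa^1-kappa^2_0-kappa^2_1} to replace $\sum\kappa^{1}(h)\otimes\kappa^{2}(h)_{0}\otimes\kappa^{2}(h)_{1}$ by $\sum\kappa^{1}(h_{1})\otimes\kappa^{2}(h_{1})\otimes h_{2}$ (inserting $b$ between the two $\kappa$-legs is legitimate over the base field) rewrites this as $\sum\kappa^{1}(h_{1})\,b\,\kappa^{2}(h_{1})\otimes h_{2}$. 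By \eqref{H-mod-on-M^A} applied to the regular $B$-bimodule $B$, whose $A$-invariants are all of $B$ since $A=\kk$, the inner factor is the Miyashita--Ulbrich action $b\lhu h_{1}$, so $(\beta\circ\Psi)(b\otimes h)=\sum(b\lhu h_{1})\otimes h_{2}=:\nu(b\otimes h)$. Finally $\nu$ is a linear automorphism of $B\otimes H$: since $\lhu$ is a right $H$-action with $b\lhu1=b$, the map $b\otimes h\mapsto\sum(b\lhu Sh_{1})\otimes h_{2}$ is inverse to $\nu$ by the antipode and counit axioms. Hence $\Psi=\beta^{-1}\circ\nu$ is a bijection, and the lemma follows.

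I expect the main obstacle to be organizational rather than substantive: one has to guess the right candidate --- inserting $b$ to the \emph{right} of $\kappa^{1}(h)$, not to the left as in $\beta^{-1}$, which is what matches the given module structure on $B\otimes B$ --- and then, in the bijectivity step, keep the two roles of $\kappa$ apart, one inside the definition of $\Psi$ and the other, via \eqref{H-mod-on-M^A}, as the Miyashita--Ulbrich action on $B$ itself.
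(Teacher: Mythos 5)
Your proof is correct, but it takes a genuinely different route from the paper's. The paper turns $B\otimes H$ into a right $H$-comodule algebra with $(B\otimes H)^{co H}=B$, checks that $B\otimes B$ (with the stated action and the coaction on the second tensorand) is a Hopf module in $\M^H_{B\otimes H}$, computes $(B\otimes B)^{co H}=B\otimes B^{co H}\cong B$, and then invokes the structure theorem (Theorem \ref{faithful-flat-Hopf-Galois-extension-thm}) for the faithfully flat Galois extension $B\subseteq B\otimes H$ to conclude $B\otimes B\cong B\otimes_B(B\otimes H)=B\otimes H$. Your map $\Psi(b\otimes h)=\sum\kappa^1(h)b\otimes\kappa^2(h)$ is precisely the counit of that equivalence made explicit, but you establish bijectivity directly: the identity $\beta\circ\Psi=\nu$ with $\nu(b\otimes h)=\sum(b\lhu h_1)\otimes h_2$, whose inverse $b\otimes h\mapsto\sum(b\lhu Sh_1)\otimes h_2$ follows from $\lhu$ being a unital right action together with the antipode axiom, reduces everything to the bijectivity of the Galois map and the identities \eqref{kappa(hk)} and \eqref{kappa^1-kappa^2_0-kappa^2_1}; note that $\kappa(1)=\beta^{-1}(1\otimes 1)=1\otimes 1$ gives $\Psi(1\otimes 1)=1\otimes 1$, and $B^A=B$ since $A=\kk$, so the Miyashita--Ulbrich action is defined on all of $B$. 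What your approach buys is self-containedness (no appeal to Schneider's theorem and no need to verify that $B\subseteq B\otimes H$ is a faithfully flat Galois extension) plus an explicit formula for the isomorphism; what the paper's buys is brevity modulo the cited machinery. All the individual steps you use check out.
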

\begin{proof}
	$B \otimes H$ is an $H$-comodule algebra, arising from the coaction $b \otimes h \mapsto \sum_{(h)} b \otimes h_1 \otimes h_2$.
	Verifying that $B \subseteq B \otimes H$ is a faithfully flat $H$-Galois extension is straightforward.
	Consequently, an equivalence functor $\big( - \otimes_B (B\otimes H), (-)^{co H} \big)$ exists between ${\M_B}$ and ${\M_{B \otimes H}^H}$.
	Computing $(\id_{B} \otimes \Delta)\big( (x \otimes y) (b \otimes h) \big)$ yields
	\begin{align*}
		= & \sum_{(y), (\kappa^2(h))} \kappa^1(h)x b \otimes y_0 \kappa^2(h)_0 \otimes y_1 \kappa^2(h)_1 \\
		\xlongequal[]{\eqref{kappa^1-kappa^2_0-kappa^2_1}} &  \sum_{(y), (\kappa^2(h))} \kappa^1(h_1)x b \otimes y_0 \kappa^2(h_1) \otimes y_1 h_2 \\
		= & \sum_{(y), (h)} \big( (x \otimes y_0) (b \otimes h_1) \big) \otimes y_1h_2,
	\end{align*}
	This establishes that $B \otimes B$ is a Hopf bimodule in the category $\M_{B \otimes H}^H$.
	Since $(B \otimes B)^{co H} = B \otimes B^{co H} \cong B$ as right $B$-modules, it follows that
	\[ B \otimes B \cong (B \otimes B)^{co H} \otimes_B (B\otimes H) = B \otimes H\]
	as right $B \otimes H$-modules, completing the argument.
\end{proof}

\begin{lem}\cite[Lemma 2.2]{Yu2016} \cite[Lemma 2.3.1]{WYZ2017} \label{Ext-lem}
	Let $H$ be a Hopf algebra of type $\mathbf{FP}_{\infty}$ with a bijective antipode, and $B$ be a $H$-Galois extension of $\kk$.
	Then 
	\[ B_B \otimes \Ext^i_{H^{op}}(\kk, H) \cong \Ext^i_{B^e}(B, B^e)\]
	as right $B$-modules for all $i$.
\end{lem}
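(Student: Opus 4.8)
The plan is to feed the regular bimodule $B^e$ into Stefan's spectral sequence for the flat $H$-Galois extension $\kk \subseteq B$ (Theorem \ref{H-Galois-spectral-sequence}), to observe that it collapses because the base ring is a field, and then to recognise the surviving $E_2$-row using Lemma \ref{B-otimes-B=B-otimes-H} together with the $\mathbf{FP}_{\infty}$-hypothesis on $H$.

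\emph{Collapsing the spectral sequence.} Since $\kk \subseteq B$ is an $H$-Galois extension and $B$ is flat over the field $\kk$, Theorem \ref{H-Galois-spectral-sequence} applies with $A = \kk$. Here $A^e = \kk$, so $\Ext^q_{\kk}(\kk, M) = 0$ for $q > 0$ while $\Ext^0_{\kk}(\kk, M) = \Hom_{\kk}(\kk, M) = M$, the latter carrying the Miyashita--Ulbrich right $H$-action $m \lhu h = \sum \kappa^1(h)\, m\, \kappa^2(h)$, i.e.\ the case $A = \kk$ of \eqref{H-mod-on-M^A}. Hence the spectral sequence degenerates and produces, functorially in the $B^e$-module $M$, an isomorphism $\Ext^i_{B^e}(B, M) \cong \Ext^i_{H^{op}}(\kk, M)$, where on the right $M$ is regarded as a right $H$-module via the Miyashita--Ulbrich action. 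Applying this with $M = B^e$, viewed as a $B^e$-$B^e$-bimodule via outer multiplication, and using that Stefan's construction is a Grothendieck spectral sequence of functors and hence respects this extra structure, we obtain an isomorphism of right $B$-modules $\Ext^i_{B^e}(B, B^e) \cong \Ext^i_{H^{op}}(\kk, B^e_{\mathrm{MU}})$, where $B^e_{\mathrm{MU}}$ denotes $B \otimes B$ equipped with the $H$-action $(x \otimes y) \lhu h = \sum \kappa^1(h) x \otimes y \kappa^2(h)$ and with the right $B$-action $(x \otimes y) \cdot b = xb \otimes y$ coming from the outer right $B^e$-structure of $B^e$ restricted to $B \otimes \kk$.

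\emph{Identifying $B^e_{\mathrm{MU}}$ and base change.} By Lemma \ref{B-otimes-B=B-otimes-H}, $B \otimes B \cong B \otimes H$ as right $B \otimes H$-modules, the action on the source being $(x \otimes y)(b \otimes h) = \sum \kappa^1(h) xb \otimes y\kappa^2(h)$. Restricting this action to $\kk \otimes H$ recovers the Miyashita--Ulbrich action, and restricting it to $B \otimes \kk$ recovers the right $B$-action above; under the isomorphism these two restrictions become $(v \otimes k) h = v \otimes kh$ and $(v \otimes k)\cdot b = vb \otimes k$ on $B \otimes H$. Thus $B^e_{\mathrm{MU}} \cong B \otimes H$ with the right $H$-action and the right $B$-action living on the two separate tensor factors. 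Now, since $H$ is of type $\mathbf{FP}_{\infty}$ and its antipode is bijective, the trivial right $H$-module $\kk$ is of type $\mathbf{FP}_{\infty}$, so it has a projective resolution $P_{\bullet} \to \kk$ with each $P_n$ a finitely generated projective right $H$-module; as $B$ is flat over $\kk$, $\Hom_{H^{op}}(P_{\bullet}, B \otimes H) \cong B \otimes \Hom_{H^{op}}(P_{\bullet}, H)$ as complexes of right $B$-modules, and taking homology gives $\Ext^i_{H^{op}}(\kk, B \otimes H) \cong B \otimes \Ext^i_{H^{op}}(\kk, H)$ with the right $B$-action on the left-hand tensorand. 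Chaining the isomorphisms yields $\Ext^i_{B^e}(B, B^e) \cong B_B \otimes \Ext^i_{H^{op}}(\kk, H)$ for all $i$, as desired.

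The delicate point is the module-structure bookkeeping in the first step: one must verify that the functoriality of Stefan's construction is strong enough to carry the outer right $B^e$-structure of the coefficient module $B^e$ over to the $\Ext^i_{H^{op}}(\kk, -)$-side, and then that the residual right $B$-action there coincides with the restriction to $B \otimes \kk$ of the right $B \otimes H$-module structure of Lemma \ref{B-otimes-B=B-otimes-H} (rather than, say, its $B^{op}$-counterpart $(x \otimes y)\mapsto x\otimes by$, which does not match a single tensor factor of $B \otimes H$). Concretely this comes down to checking that $(x \otimes y) \mapsto xb \otimes y$ commutes with the Miyashita--Ulbrich action, a short computation with the translation-map identities \eqref{kappa(hk)}--\eqref{m-kappa=vep}, but it is the place where keeping the left/right conventions straight is essential.
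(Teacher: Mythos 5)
Your proposal is correct and follows essentially the same route as the paper's proof: collapse Stefan's spectral sequence for $\kk \subseteq B$ to get $\Ext^i_{B^e}(B, B^e) \cong \Ext^i_{H^{op}}(\kk, B^e)$, identify $B^e \cong B \otimes H$ via Lemma \ref{B-otimes-B=B-otimes-H}, and use the $\mathbf{FP}_{\infty}$ hypothesis to pull the flat factor $B$ out of the Ext. The only difference is that you make explicit the right-$B$-module bookkeeping that the paper leaves implicit.
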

\begin{proof}
	Since $\kk_H$ is of type $\mathbf{FP}_{\infty}$, we have
	\[ \Ext^i_{H^{op}}(\kk, B \otimes H) \cong (B \otimes H) \otimes_H \Ext^i_{H^{op}}(\kk, H) \cong B \otimes \Ext^i_{H^{op}}(\kk, H) \]
	for all $i$.
	On the other hand, by Lemma \ref{B-otimes-B=B-otimes-H}, $B^e \cong B \otimes H$ as right $B \otimes H$-modules.
	Consequently, applying Theorem \ref{H-Galois-spectral-sequence}, we obtain
	\[\Ext^i_{H^{op}}(\kk, B \otimes H) \cong \Ext^i_{H^{op}}(\kk, B^e) \cong \Ext^i_{H^{op}}(\kk, \Hom_{\kk^e}(\kk, B^e))\cong  \Ext^i_{B^e}(B, B^e) \]
	for all $i$.
	By combining the above isomorphisms, we arrive at the desired conclusion.
\end{proof}

\begin{thm}\label{sCY-B-vs-AS-Gorenstein-H}
	Let $H$ be a Hopf algebra of type $\mathbf{FP}_{\infty}$ with a bijective antipode, and $B$ be a $H$-Galois extension of $\kk$.
	
	(1) \cite[Theorem 7]{Bic2022} If $\gld (H) = d < \infty$, then $\pd(_{B^e} B) = d$.
	
	(2) \cite[Theorem 2.5]{Yu2016} If $H$ is AS regular of dimension $d$, then $B$ is skew Calabi-Yau of dimension $d$.
	
	(3) If $\pd(_{B^e} B) = d$, then the injective dimension $\injdim(H) = d$.
	
	(4) If $B$ is skew Calabi-Yau of dimension $d$, then $H$ is AS Gorenstein of dimension $d$.
\end{thm}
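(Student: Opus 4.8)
Parts (1) and (2) are quoted from \cite{Bic2022} and \cite{Yu2016}, so what I must supply is (3) and (4). Both will be deduced from the ``$\Ext$-transfer'' Lemma \ref{Ext-lem}, which identifies $\Ext^i_{B^e}(B,B^e)$ with $B_B\otimes_\kk\Ext^i_{H^{op}}(\kk,H)$, together with Proposition \ref{H-B-FP-type} and Lemma \ref{proj-dim-pseudo-coherent-mod}. Indeed, since $H$ is of type $\mathbf{FP}_\infty$, Proposition \ref{H-B-FP-type} shows $B$ is a $B^e$-module of type $\mathbf{FP}_\infty$, hence of type $\mathbf{FP}$ as soon as $\pd(_{B^e}B)$ is finite, so Lemma \ref{proj-dim-pseudo-coherent-mod} applies and $\pd(_{B^e}B)=\sup\{i:\Ext^i_{B^e}(B,B^e)\neq0\}$.

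For (3): the hypothesis gives $d=\sup\{i:\Ext^i_{B^e}(B,B^e)\neq0\}$; since $B_B\otimes_\kk-$ is a faithful functor on $\kk$-vector spaces, Lemma \ref{Ext-lem} turns this into $d=\sup\{i:\Ext^i_{H^{op}}(\kk,H)\neq0\}$, and transporting along the algebra isomorphism $S\colon H\to H^{op}$ into $d=\sup\{i:\Ext^i_H(\kk,H)\neq0\}$; in particular $\Ext^d_H(\kk,H)\neq0$, so $\injdim(_HH)\geq d$. For the reverse inequality, recall from the opening of Section 4 that, because here $A=\kk$, the Ehresmann--Schauenburg bialgebroid $L:=L(B,B,H)$ is a Hopf algebra with bijective antipode and $B$ is an $L$-$H$-biGalois extension of $\kk$. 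Proposition \ref{id(L)-leq-pd(B)} with $A=\kk$ gives $\injdim(_LL)\leq\pd(_{B^e}B)+\rgld(\kk)=d$; running the very same estimate from the $H$-side — that is, regarding $B$ as a Galois object for the Hopf algebra carried by the $L$-coaction and using that the associated Ehresmann--Schauenburg Hopf algebra is $H$ up to an inessential $\mathrm{op}/\mathrm{cop}$ twist (which changes neither injective dimension nor $\Ext^\bullet(\kk,-)$) — gives $\injdim(_HH)\leq\pd(_{B^e}B)=d$. Hence $\injdim(_HH)=d$, and since $S$ is bijective this is the common value $\injdim(H)=d$.

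For (4): if $B$ is skew Calabi--Yau of dimension $d$, then $\Ext^i_{B^e}(B,B^e)=0$ for $i\neq d$ and $\Ext^d_{B^e}(B,B^e)\cong B_\mu$, which is nonzero and isomorphic to $B_B$ as a right $B$-module (via $x\mapsto\mu^{-1}(x)$). In particular $\pd(_{B^e}B)=d$, so part (3) gives $\injdim(H)=d$. Feeding the vanishing into Lemma \ref{Ext-lem} yields $B_B\otimes_\kk\Ext^i_{H^{op}}(\kk,H)=0$, hence $\Ext^i_{H^{op}}(\kk,H)=0$, for $i\neq d$, while $B_B\otimes_\kk\Ext^d_{H^{op}}(\kk,H)\cong B_B$; the latter forces $\Ext^d_{H^{op}}(\kk,H)$ to be nonzero and finite dimensional, since for a right $B$-module isomorphism $e\colon B\otimes_\kk V\to B$, writing $e^{-1}(1_B)=\sum_{i=1}^n\beta_i\otimes v_i$ and using right $B$-linearity gives $e^{-1}(b)=\sum_i\beta_ib\otimes v_i\in B\otimes\langle v_1,\dots,v_n\rangle$ for all $b$, so the surjection $e^{-1}$ has image in $B\otimes\langle v_1,\dots,v_n\rangle$ and $V=\langle v_1,\dots,v_n\rangle$. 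Transporting along $S$, $\Ext^i_H(\kk,H)=0$ for $i\neq d$ and $\Ext^d_H(\kk,H)$ is nonzero and finite dimensional; since $H$ now has injective dimension $d$ and is of type $\mathbf{FP}_\infty$ with bijective antipode, Lemma \ref{AS Gorenstein-lem}, implication (2)$\Rightarrow$(3), upgrades this to $\Ext^d_H(\kk,H)=\kk$. By Definition \ref{AS Gorenstein-Hopf-defn}, $H$ is AS Gorenstein of dimension $d$.

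The step requiring the most care is the bound $\injdim(_HH)\leq d$ in (3): this is where the homological smoothness of $B$ over $B^e$ — rather than merely finiteness of $\gld(B)$ — must genuinely be used. Finiteness of $\gld(B)$ by itself is not enough: a degree-two purely inseparable extension of an imperfect field of characteristic two is a faithfully flat Galois object for the corresponding group algebra and has global dimension zero, yet the group algebra is not even Gorenstein — and for such a $B$ one has $\pd(_{B^e}B)=\infty$, so the hypothesis of (3) does rule it out. The mechanism that does the work is the bialgebroid estimate of Proposition \ref{id(L)-leq-pd(B)} together with the $L$-$H$-symmetry of the biGalois object $B$; the only point to verify is that the $\mathrm{op}/\mathrm{cop}$-bookkeeping in the biGalois correspondence leaves the relevant homological quantities untouched, which it does.
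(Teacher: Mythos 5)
Your proof of (3) and (4) is correct and follows essentially the same route as the paper's: Lemma \ref{Ext-lem} to transfer $\Ext$ between $H^{op}$ and $B^e$, Proposition \ref{H-B-FP-type} together with Lemma \ref{proj-dim-pseudo-coherent-mod} for non-vanishing in top degree, Proposition \ref{id(L)-leq-pd(B)} for the upper bound, and Lemma \ref{AS Gorenstein-lem} to conclude in (4) --- and in (3) you are in fact more careful than the paper, whose own argument literally only bounds $\injdim({_LL})$ for $L = L(B,B,H)$, so that your symmetric biGalois step is genuinely needed to obtain the stated bound on $H$. One correction to your closing aside: for $B$ a degree-two purely inseparable extension in characteristic two, the relevant Hopf algebra $\kk[\Z/2\Z] \cong \kk[\epsilon]/(\epsilon^2)$ is finite dimensional, hence Frobenius and self-injective, so it \emph{is} AS Gorenstein of dimension $0$; the example still shows $\pd(_{B^e}B)=\infty$ while $\gld(B)=0$, but the claim that the group algebra ``is not even Gorenstein'' is false.
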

\begin{proof}
	(1)
	Since $\kk_H$ is of type $\mathbf{FP}_{\infty}$, we can deduce from Lemma \ref{proj-dim-pseudo-coherent-mod} that $\Ext^d_{H^{op}}(\kk, H) \neq 0$.
	Hence $\Ext^d_{B^e}(B, B^e) \neq 0$ by Lemma \ref{Ext-lem}.
	Since $\pd(_{B^e} B) \leq \pd(\kk_H) = d$ by Corollary \ref{pdim-B-as-B-B-bimodule}, it follows that $\pd(_{B^e} B) = d$.
	
	(2)
	By Lemma \ref{Ext-lem}, 
	\[ \Ext^d_{B^e}(B, B^e) \cong \begin{cases}
		0, & i \neq d, \\
		B \otimes \Ext_{H^{op}}^d(\kk, H) \cong B, & i = d.
	\end{cases}\]
	as right $B$-modules.
	Similarly, we can prove that $\Ext^d_{B^e}(B, B^e) \cong B$ as left $B$-modules.
	Hence there exists an automorphism $\mu$ of $B$ such that $\Ext^d_{B^e}(B, B^e) \cong B_{\mu}$.
	So $B$ is skew Calabi-Yau of dimension $d$.
	
	(3)
	By Lemma \ref{H-B-FP-type}, $_{B^e}B$ is of type $\mathbf{FP}_{\infty}$, which ensures that $\Ext^d_{B^e}(B, B^e) \neq 0$ according to Lemma \ref{proj-dim-pseudo-coherent-mod}.
	Therefore, Lemma \ref{Ext-lem} implies that $\Ext^d_{H^{op}}(\kk, H) \neq 0$.
	On the other hand, since $\injdim(_LL) \leq d$ by Proposition \ref{id(L)-leq-pd(B)}, it follows that $\injdim(_LL) = d$.
	
	(4)
	If $B$ is skew Calabi-Yau of dimension $d$, then by Lemma \ref{Ext-lem} 
	\[ B \otimes \Ext^i_{H}(\kk, H) \cong \Ext^d_{B^e}(B, B^e) \cong \begin{cases}
		0, & i \neq d, \\
		B, & i = d.
	\end{cases}\]
	as right $B$-modules for all $i$.
	Hence $\Ext^d_{H^{op}}(\kk, H)$ is finite dimensional over $\kk$, see \cite[Corollary 1.2]{Lam1999} for example.
	Therefore, Lemma \ref{AS Gorenstein-lem} implies that $H$ is AS Gorenstein of dimension $d$.
\end{proof}

\begin{thm}\label{mMT-preserve-injdim}
	Let $H$ and $L$ be two monoidally Morita-Takeuchi equivalent Hopf algebras of type $\mathbf{FP}_{\infty}$.
	Suppose that the antipodes of $H$ and $L$ are both bijective.
	
	(1) If $H$ has finite global dimension $d$, then the injective dimension $\injdim(_LL) = d$.
	
	(2) If $H$ is AS regular of dimension $d$, then $L$ is AS Gorenstein of dimension $d$.
	
	(3) If $L$ and $H$ are both AS Gorenstein, then $\injdim (L)  = \injdim(H)$.
\end{thm}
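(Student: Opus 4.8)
The plan is to realise the monoidal Morita-Takeuchi equivalence concretely. By Schauenburg's theorem \cite{Sch1996} there is an $L$-$H$-biGalois extension $B$ of $\kk$, so $B$ is at once a right $H$-Galois extension of $\kk$ and a left $L$-Galois extension of $\kk$. The idea is to run Theorem \ref{sCY-B-vs-AS-Gorenstein-H} twice: once through the right $H$-Galois structure, which converts a homological property of $H$ into a property of the bimodule $_{B^e}B$, and once through the left $L$-Galois structure, which converts that same property of $_{B^e}B$ back into the corresponding property of $L$.

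The one preparatory point is that Theorem \ref{sCY-B-vs-AS-Gorenstein-H} and Lemma \ref{Ext-lem} are phrased for right Galois extensions, so the left $L$-Galois structure must first be turned into a right one. Since $S_L$ is bijective, $L^{cop}$ is a Hopf algebra with bijective antipode $S_L^{-1}$, and reading the coaction \eqref{L-comod-on-B} with its two tensor factors interchanged exhibits $B^{op}$ as a right $L^{cop}$-Galois extension of $\kk$; this is a routine verification. The flip $a \otimes b \mapsto b \otimes a$ is an algebra isomorphism $B^e \to (B^{op})^e$ sending the $B^e$-module $B$ to the $(B^{op})^e$-module $B^{op}$, so $\pd(_{B^e}B) = \pd(_{(B^{op})^e}B^{op})$, the algebra $B$ is skew Calabi-Yau of dimension $d$ if and only if $B^{op}$ is, and $\Ext^i_{B^e}(B, B^e) \cong \Ext^i_{(B^{op})^e}(B^{op}, (B^{op})^e)$ for every $i$. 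Moreover, being of type $\mathbf{FP}_{\infty}$ and being AS Gorenstein of a given dimension depend only on the underlying algebra and its trivial module, so $L^{cop}$ inherits both from $L$; in particular $\injdim(_{L^{cop}}L^{cop}) = \injdim(_LL)$.

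Parts (1) and (2) now follow at once. For (1): if $\gld(H) = d$, then $\pd(_{B^e}B) = d$ by Theorem \ref{sCY-B-vs-AS-Gorenstein-H}(1), hence $\pd(_{(B^{op})^e}B^{op}) = d$, and Theorem \ref{sCY-B-vs-AS-Gorenstein-H}(3) applied to the right $L^{cop}$-Galois extension $B^{op}$ gives $\injdim(L^{cop}) = d$, that is, $\injdim(_LL) = d$. For (2): if $H$ is AS regular of dimension $d$, then $B$ is skew Calabi-Yau of dimension $d$ by Theorem \ref{sCY-B-vs-AS-Gorenstein-H}(2), hence so is $B^{op}$, and Theorem \ref{sCY-B-vs-AS-Gorenstein-H}(4) applied to $B^{op}$ over $L^{cop}$ shows that $L^{cop}$, and therefore $L$, is AS Gorenstein of dimension $d$.

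For (3) write $d_H = \injdim(H)$ and $d_L = \injdim(L)$. Applying Lemma \ref{AS Gorenstein-lem} to $H^{op}$ and using that $S_H$ is bijective (which makes $\Ext^\bullet_{H^{op}}(\kk, H)$ agree with $\Ext^\bullet_H(\kk, H)$) shows that $\Ext^i_{H^{op}}(\kk, H)$ vanishes for $i \neq d_H$ and is one-dimensional for $i = d_H$; substituting into Lemma \ref{Ext-lem} gives that $\Ext^i_{B^e}(B, B^e)$ vanishes for $i \neq d_H$ and is nonzero for $i = d_H$. Repeating this with the AS Gorenstein Hopf algebra $L^{cop}$ and the right $L^{cop}$-Galois extension $B^{op}$, and transporting along the flip isomorphism $\Ext^i_{(B^{op})^e}(B^{op}, (B^{op})^e) \cong \Ext^i_{B^e}(B, B^e)$, shows that $\Ext^i_{B^e}(B, B^e)$ vanishes for $i \neq d_L$ and is nonzero for $i = d_L$; comparing, $d_H = d_L$. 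The main obstacle is not conceptual but organisational: one must verify carefully that $B^{op}$ is a right $L^{cop}$-Galois extension of $\kk$ and that each homological quantity used --- $\pd$ over the enveloping algebra, the skew Calabi-Yau property, and the unique degree in which $\Ext^\bullet_{B^e}(B, B^e)$ lives --- is unchanged under the passages $B \mapsto B^{op}$ and $L \mapsto L^{cop}$, so that a result stated for right $H$-Galois extensions can legitimately be applied on the $L$-side; a smaller point is the translation, via Lemma \ref{AS Gorenstein-lem} applied to $H^{op}$, from ``$H$ is AS Gorenstein of dimension $d_H$'' to the precise form of $\Ext^\bullet_{H^{op}}(\kk, H)$.
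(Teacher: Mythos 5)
Your proposal is correct and follows essentially the same route as the paper: realise the equivalence by an $L$-$H$-biGalois object $B$, apply Theorem \ref{sCY-B-vs-AS-Gorenstein-H} through the right $H$-Galois structure and again through the right $L^{cop}$-Galois structure for (1) and (2), and for (3) use Lemma \ref{Ext-lem} on both sides to read off the unique degree where $\Ext^{\bullet}_{B^e}(B,B^e)$ is nonzero. The only difference is cosmetic: the paper views $B$ itself as the right $L^{cop}$-Galois extension rather than passing to $B^{op}$, and your extra bookkeeping about invariance under $B \mapsto B^{op}$ is harmless.
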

\begin{proof}
	Observing that $B$ is a right $L^{cop}$-Galois extension of $\kk$, we can deduce (1) and (2) directly from Theorem \ref{sCY-B-vs-AS-Gorenstein-H}.
	
	(3)
	According to Lemma \ref{Ext-lem}, we have
	\[ \Ext^i_{L^{op}}(\kk, L) \neq 0 \text{ if and only if } \Ext^i_{B^e}(B, B^e) \neq 0 \text{ if and only if } \Ext^i_{H^{op}}(\kk, H) \neq 0.\]
	Since $L$ and $H$ are AS Gorenstein by our assumption, it follows that $\injdim (L)  = \injdim(H)$.
\end{proof}

\subsection{Monoidal Morita-Takeuchi equivalences don't preserve the global dimension.}

We conclude our discussion by presenting a counterexample to Question \ref{ques-0}.

\begin{ex}\label{ex}
	The Liu Hopf algebra $H:= L(n, \xi)$ \cite{Liu2009} is generated by $x, g, g^{-1}$ with the relations
	$$gg^{-1} = g^{-1}g = 1, \quad xg = \xi gx, \quad x^n = 1 - g^n,$$
	where $\xi \in \kk$ is an $n$-th primitive root of unity. The comultiplication, counit and antipode on $H$ are defined by
	$$\Delta(g) = g \otimes g, \; \Delta(x) = x \otimes 1 + g \otimes x, \; \vep(g) = 1, \; \vep(x) = 0, \; S(g) = g^{-1}, \; S(x) = -g^{-1}x.$$
	
	Let $B:= \kk\langle g, g^{-1}, t \rangle / \langle gg^{-1} - 1, g^{-1}g - 1, tg - \xi gt, t^n + g^n \rangle$.
	It serves as a right $H$-comodule algebra through the comodule map $\rho$ defined by
	$$\rho(g) = g \otimes g, \qquad \rho(t) = t \otimes 1 + g \otimes x.$$
	
	The bijectivity of the $H$-comodule map $\gamma: H \to B$ given by $g^ix^j \mapsto g^it^j$ ($i \in \Z, j = 0, 1, \dots, n$) implies its convolution invertibility with the inverse $\gamma S$. Consequently, $B$ is an $H$-cleft extension of $\kk$, and thus a right $H$-Galois extension of $\kk$ according to \cite[Theorem 8.2.4]{Mon1993}.
	
	Let $L$ be the Hopf algebra which is generated by $y, g, g^{-1}$ with the relations
	$$gg^{-1} = g^{-1}g = 1, \quad yg = \xi gy, \quad y^n = 0.$$
	The comultiplication, counit and antipode on $L$ are defined by
	$$\Delta(g) = g \otimes g, \; \Delta(y) = 1 \otimes y + y \otimes g, \; \vep(g) = 1, \; \vep(y) = 0, \; S(g) = g^{-1}, \; S(y) = -yg^{-1}.$$
	
	$B$ is also a left $L$-comodule algebra, defined through the comodule map $\delta$ with $\delta(g) = g \otimes g$ and $\delta(t) = g \otimes t + y \otimes 1$. Furthermore, $B$ qualifies as a left $L$-Galois extension of $\kk$. Verifying that $B$ is an $L$-$H$-bicomodule is straightforward, leading to the conclusion that $B$ is an $L$-$H$-biGalois extension of $\kk$.
	
	The global dimension of $H$ is $1$, as stated in \cite[Lemma 2.4]{Liu2009}, whereas the global dimension of $L$ is infinite. This observation underscores that monoidal Morita-Takeuchi equivalences do not consistently preserve the global dimension. Nevertheless, since $H$ and $L$ are both noetherian affine PI Hopf algebras, they share the same injective dimension, as implied by Corollary \ref{PI-Hopf-alg-mMT-preserve-injdim-cor}.
\end{ex}

\section*{Acknowledgments}
The author thanks the referee for the valuable comments and suggestions.
The author is also grateful to Professor Xingting Wang for his useful conversations.
The research work was supported by the NSFC (project 12301052) and Fundamental Research Funds for the Central Universities (2022110884).


\bibliographystyle{siam}
\bibliography{References}

\end{document}